\newtheorem{theorem}{Theorem}
\newtheorem*{Mtheorem}{Main Theorem}
\numberwithin{theorem}{section}
\numberwithin{equation}{section}
\newtheorem{lemma}[theorem]{Lemma}
\newtheorem{proposition}[theorem]{Proposition}
\newtheorem{definition}[theorem]{Definition}
\newtheorem{corollary}[theorem]{Corollary}
\newtheorem{claim}[theorem]{Claim}
\newtheorem{question}[theorem]{Question}
\theoremstyle{remark}
\newtheorem{remark}[theorem]{Remark}
\newtheorem*{notation}{Notation}
\newtheorem*{thank}{Acknowledgments}
\newenvironment{myitemize}{
  \begin{itemize}
    \setlength{\itemsep}{3 pt}
    \setlength{\parsep}{0 pt}
}{
  \end{itemize}
}
\renewcommand{\mod}{\bmod\,}
\newcommand{\di}{\textnormal{diam}\,}
\newcommand{\gi}{\textnormal{girth}\,}
\newcommand{\N}{\mathbb{N}}
\newcommand{\Z}{\mathbb{Z}}
\newcommand{\legendre}[2]{\genfrac{(}{)}{}{}{#1}{#2}}
\begin{document}
	
\title{Logarithmic girth expander graphs of $SL_n(\mathbb F_p)$}
\author{Goulnara Arzhantseva}
\address{Universit\"at Wien, Fakult\"at f\"ur Mathematik\\
Oskar-Morgenstern-Platz 1, 1090 Wien, Austria.}
\email{\href{mailto:goulnara.arzhantseva@univie.ac.at}{goulnara.arzhantseva@univie.ac.at}}

\author[Arindam Biswas]{Arindam Biswas}
\address{Universit\"at Wien, Fakult\"at f\"ur Mathematik, Oskar-Morgenstern-Platz 1, 1090 Wien, Austria}
\email{arin.math@gmail.com}

\date{}
\subjclass{20G40, 05C25, 20E26, 20F65}
\keywords{Large girth graphs, expander graphs, diameter, special linear group, coarse embedding, thin matrix group.}
\thanks{This research was partially supported by the European Research Council (ERC) grant of Goulnara Arzhantseva, ``ANALYTIC'' grant agreement no.\ 259527.}

\begin{abstract}We provide an explicit construction  of finite 4-regular graphs $(\Gamma_k)_{k\in \mathbb N}$ with ${\gi \Gamma_k\to\infty}$ as $k\to\infty$ and 
$\frac{\di \Gamma_k}{\gi \Gamma_k}\leqslant D$ for some $D>0$ and all $k\in\N$.
For each fixed dimension $n\geqslant 2,$ we find a pair of matrices in $SL_{n}(\mathbb{Z})$ such that (i) they generate a free subgroup, (ii)~their reductions $\bmod\, p$  generate $SL_{n}(\mathbb{F}_{p})$ for all sufficiently large primes $p$, 
 (iii) the corresponding Cayley graphs of $SL_{n}(\mathbb{F}_{p})$ have girth at least $c_n\log p$ for  some $c_n>0$.
Relying on growth results (with no use of expansion properties of the involved graphs), we observe that the diameter of those Cayley graphs is at most $O(\log p)$. This gives 
infinite sequences of finite $4$-regular Cayley graphs of $SL_n(\mathbb F_p)$ as $p\to\infty$ with large girth and bounded diameter-by-girth ratio. These are the first explicit examples in all dimensions $n\geqslant 2$ (all prior examples were in $n=2$). Moreover, they happen to be expanders. Together with  Margulis' and Lubotzky-Phillips-Sarnak's classical constructions, these new graphs are the only known explicit logarithmic girth Cayley graph expanders.
\end{abstract}
\maketitle

\section{Introduction}
The \emph{girth} of a graph is the edge-length of its shortest non-trivial cycle (it is assigned to be infinity for an acyclic graph). The \emph{diameter} of a graph 
is the greatest edge-length distance between any pair of its vertices. We regard a countable graph $\Gamma$ as a sequence of its connected components 
$\Gamma=(\Gamma_k)_{k\in\mathbb{N}}$ each of which is endowed with the edge-length distance.

\begin{definition}[large girth graph, cf.~\cite{Biggs98}]
A graph $\Gamma=(\Gamma_k)_{k\in\mathbb{N}}$  is \emph{large girth} if $\gi \Gamma_k\to \infty$, and it is \emph{logarithmic girth} if 
there exists a constant $c>0$ such that for all  $k\in\mathbb N$:
$${\gi \Gamma_k}\geqslant c\log\vert \Gamma_k\vert.$$ 
\end{definition}

\begin{definition}[{\rm dg}-bounded graph~\cite{AT18}]
A graph $\Gamma=(\Gamma_k)_{k\in\mathbb{N}}$ is \emph{dg-bounded} if there exists a constant $D>0$ such that  for all $k\in\mathbb N$:
$$\frac{\di \Gamma_k}{\gi \Gamma_k}\leqslant D.$$ 
\end{definition}
  A dg-bounded graph with uniformly bounded degree $r$ satisfies $\gi \Gamma_k \geqslant \frac{1}{D}\di \Gamma_k $ and $\di \Gamma_k\geqslant \log_{r}(\vert \Gamma_k\vert-1),$ for $r\geqslant 3$, while  $\di \Gamma_k\geqslant  \frac{1}{2}(\vert \Gamma_k\vert-1)$ for $r=2$.
Hence, for such a graph, $\gi \Gamma_k\to\infty,$ whenever  $\vert \Gamma_k\vert\to \infty$ as  $k\to\infty.$ \medskip

In this paper, we focus on  large girth dg-bounded graphs with uniformly bounded degree $r\geqslant 2$. 
For $r=2$, a growing sequence of cycle graphs is an easy example of  such a graph.
A major theoretical and practical challenge is to build large girth dg-bounded graphs for $r\geqslant 3$. \medskip

The \emph{existence} of $r$-regular large girth dg-bounded graphs $\Gamma=(\Gamma_k)_{k\in\mathbb{N}}$, for each $r\geqslant 3$,  is a classical result. For instance, first, one can show the existence of large girth graphs using the probabilistic argument of Erd\H{o}s-Sachs~\cite{ES63} (see also~\cite{Sachs63} for a recursive, on the degree $r$, construction) 
or by taking iterated $\bmod\, 2$-covers of a given graph (each iteration doubles the girth)
and then, following Biggs~\cite[Lemma 3.1]{Biggs98}, prove that the diameter of the graph with the minimal number of vertices among all graphs of a given degree $r \geqslant 2$ and of girth at least $g\geqslant 3$ is at most $g$ (giving $D=1$ for such a graph, called a `cage'). However, this does not provide concrete examples and the resulting graphs are not necessarily Cayley graphs.\medskip

The first \emph{explicit} examples for $r\geqslant 3$ are given by suitable Cayley graphs of $SL_{2}(\mathbb F_p)$, where $\mathbb F_p=\mathbb{Z}/p\mathbb{Z}$ are the integers $\mod p$ for a prime $p$. Namely, the famous explicit construction of large girth graphs by Margulis~\cite{M82} is made of $4$-regular Cayley graphs $\Gamma_p=Cay(SL_{2}(\mathbb F_p), \{A_p,B_p\})$ with 
matrices $A_p,B_p$ being images of Sanov's generators of the free group (see below for notation and Section~\ref{sec:2} for details). It satisfies $\gi \Gamma_p\geqslant C\log |\Gamma_p|$ for $C>0$ and it is dg-bounded as by Selberg's theorem~\cite{selberg}, combined with the transfer principle of Brooks~\cite{Br} and Burger~\cite{Bu}, they form an infinite expander family as $p\to \infty$. Then, $\di\Gamma_p=O(\log |\Gamma_p|)$ because every expander has logarithmic diameter. 
Alternatively, the celebrated Lubotzky-Phillips-Sarnak~\cite{LPS} graphs, $\Gamma_q=Cay(PGL_{2}(\mathbb F_q), \{S_1,S_2, \ldots, S_{p+1}\})$ as $q\to\infty$, where $p,q$ are distinct primes congruent to 1 $\mod 4$ 
with the Legendre symbol $\legendre{p}{q}=-1$ and  $S_1, \ldots, S_{p+1}$ are suitable $p+1$ matrices,
form a Ramanujan family (hence, an expander family) and they satisfy $\gi \Gamma_q\geqslant 4 \log_pq-\log_p4$ with $\vert \Gamma_q\vert=q(q^2-1)$. Therefore,   
they are $(p+1)$-regular large girth dg-bounded Cayley graphs. Observe that both of these examples are in \emph{dimension 2} (i.e., in $2\times 2$ matrices), they are of logarithmic girth, and
the expansion property is used to conclude the dg-boundedness. Moreover, 
these constructions of Margulis and Lubotzky-Phillips-Sarnak (and their slight variants, again in dimension 2) have been, up to now, the only known explicit large girth dg-bounded expanders among the Cayley graphs.\medskip

Our aim is to provide an explicit construction of 4-regular large girth dg-bounded Cayley graphs of $SL_n(\mathbb F_p)$  as $p\to\infty$, for \emph{all dimensions} $n\geqslant 2$.
Concretely, we shall give an extensive description of two-element generating sets of $SL_n(\mathbb F_p)$ that induce the required properties of the Cayley graphs.
We formulate now our main result, indicate several corollaries (among others, constructions of $2k$-regular examples for \emph{all} $k\geqslant 2$) and explain the motivation
for this work.  

\medskip

{\bf Magic matrices in dimension $n\geqslant 2$:}\medskip

Let $ A = \begin{pmatrix}
	1 & a & 0 & 0& \ldots & 0\\
	0 & 1 & a & 0& \ldots & 0\\
	0 & 0 & 1 & a & \ldots &0\\
	\vdots & & & & & \vdots\\
	&&&&\ldots&a\\
	0 & 0 & 0& 0&\ldots & 1\\
	\end{pmatrix}$ and $B = \begin{pmatrix}
	1 & 0 & 0 & \ldots & &0\\
	b & 1 & 0 & \ldots & &0\\
	0 & b & 1 &  \ldots& &0\\
	0 & 0 & b & \ldots & &0\\
	\vdots &&&&\vdots&\vdots\\
		0& 0 & 0 & \ldots & {\strut \strut b} & 1\\
	\end{pmatrix}\in SL_{n}(\mathbb{Z})$ with ${a,b\geqslant 2.}$
\medskip

\noindent We denote by $A_p$  and $B_p$ their reductions modulo a prime $p$, and by
$\langle A,B\rangle\leqslant SL_n(\mathbb Z)$ and $\langle A_p,B_p\rangle\leqslant SL_n(\mathbb F_p),$
the subgroups generated by these matrices. \medskip

\begin{Mtheorem} The matrices $A$ and $B$ satisfy the following.\medskip

{\rm I. \underline{Freeness}:} 

\begin{myitemize}
\item For $n=2$, $\langle A,B\rangle$ is free;
\item For $n=3$, $\forall l\geqslant 4$, $\langle A^l,B^l\rangle$ is free;
\item For each $n\geqslant 4$, $\forall l\geqslant 3(n-1)$, $\langle A^l,B^l\rangle$ is free.
\end{myitemize}
\smallskip

{\rm II. \underline{Generation $\mod p$}:} 

\begin{myitemize}
\item For $n=2$, $\forall p$ prime with $a,b\not\equiv 0 (\mod p)$, we have $\langle A_p, B_p\rangle=SL_2(\mathbb F_p)$;
\item For $n=3$,  $a\equiv 1 (\mod 3), b\equiv -1(\mod 3), \forall l=4^k, k\in \mathbb N$,  we have $\langle A_p^l, B_p^l\rangle=SL_3(\mathbb F_p)$ for $p=3$ and all primes $p>K$, where $K$ is a constant;
\item For each $n\geqslant 4$, $\forall q$ prime with $n\equiv 1(\mod q)$ and $a,b$ with  $a\equiv 1 (\mod q), b\equiv 1(\mod q)$,
 $\forall l\in \lbrace 1 \rbrace\cup \lbrace q^{k+1}+1\, \colon k\in \mathbb N, k\geqslant t, \hbox{ with $t\in\N$ given by } q^t\leqslant n <q^{t+1} \rbrace $, we have
 $\langle A^l_p,B^l_p\rangle=SL_n(\mathbb F_p)$ for $p=q$ and all primes $p>L$, where $L= L(n,a,b,l)$ is a constant.
\end{myitemize}
\smallskip

{\rm III. \underline{Girth and diameter}:} 

For each of the following choices of the parameters: 

\begin{myitemize}
\item if $n=2$, $l=1$, then $\forall p$ prime with $a,b\not\equiv 0 (\mod p)$;
\item if $n=3$,  $a\equiv 1 (\mod 3), b\equiv -1(\mod 3), \forall l=4^k, k\in \mathbb N$, then for $p=3$ and all primes $p>K$, where $K$ is a constant;
\item if $n\geqslant 4$ is fixed, $\forall q$ prime with $n\equiv 1(\mod q)$, and $a,b$  with $a\equiv 1 (\mod q), b\equiv 1(\mod q)$, 
$\forall l\in \lbrace q^{k+1}+1 \hbox{ if } q\neq 2 \hbox{ and } q^{k+2}+1 \hbox{ if } q=2\, \colon k\in \mathbb N, k\geqslant t, \hbox{ with $t\in \N$ given by } q^t\leqslant n <q^{t+1} \rbrace $,
then  for $p=q$ and all primes $p>L$, where $L = L(n,a,b,l)$ is a constant,
\end{myitemize}
we have 
$$\gi Cay(SL_n(\mathbb F_p), \{ A_p^l, B_p^l\})\geqslant c_n\log p\,
 \hbox{  and }\,
\di Cay(SL_n(\mathbb F_p), \{ A_p^l, B_p^l\})= O(\log p),
 $$
where $c_n=c_n(a,b,l)$ is a constant.
\medskip

In particular, $\Gamma_p^{n,l}(a,b)=Cay(SL_n(\mathbb F_p), \{ A^l_p, B^l_p\})$ as $p\to\infty$ is a large girth dg-bounded graph,
whenever $n$ and $l$ are as above. \smallskip

Moreover, it is an expander. In addition, all the above constants $K, L, c_n$ and that of $O(\log p)$ term are effective.
\end{Mtheorem}
\medskip

Taking specific dimension $n$ and power $l$ of our matrices gives, for instance,
the following sequences of 4-regular large girth dg-bounded graphs:
$$
\Gamma_p^{3,4}(4,2)=Cay(SL_3(\mathbb F_p), \{ {\footnotesize \begin{pmatrix}
	1 & 4 & 0\\
	0 & 1 & 4\\
	0 & 0 & 1
	\end{pmatrix}^{\tiny 4}, \begin{pmatrix}
	1 & 0 & 0\\
	2 & 1 & 0\\
	0 & 2 & 1
	\end{pmatrix}^4\}}) \hbox{ as } p\to\infty
$$
and
$$
\Gamma_p^{4,10}(4,7)=Cay(SL_4(\mathbb F_p), \{ {\footnotesize \begin{pmatrix}
	1 & 4 & 0 & 0\\
	0 & 1 & 4 & 0\\
	0 & 0 & 1 & 4\\
	0 & 0 & 0 & 1\\
	\end{pmatrix}^{\tiny 10}, \begin{pmatrix}
	1 & 0 & 0 & 0\\
	7 & 1 & 0 & 0\\
	0 & 7 & 1 & 0\\
	0 & 0 & 7 & 1
	\end{pmatrix}^{10}\}}) \hbox{ as } p\to\infty.
$$
\medskip

The graphs produced by the Main Theorem and the proof itself provide explicit examples relevant to various subjects. For instance, in each dimension $n\geqslant 3$,
every free subgroup $\langle A^l, B^l\rangle$ from the Main Theorem is an example of a \emph{thin} matrix group.  No explicit examples of thin free subgroups of $SL_n(\mathbb Z)$ for $n\geqslant 3$ were previously known.    Also, the
4-regular graphs $\Gamma_p^{n,l}(a,b)$ have $2k$-regular counterparts $\Gamma_p^{n,l}(a,b;k)$, on the same vertex set!
Moreover, differing conceptually from the prior examples in dimension $2$ (because produced by a thin group in contrast to an arithmetic group in case $n=2$),
our expanders are concrete test graphs in the quest for large girth \emph{super-expanders}. We discuss all these corollaries in detail, together with the corresponding open questions,
in Section~\ref{sec:questions}.\medskip

A classical application of explicit  large girth graphs, and specifically logarithmic girth graphs, is in the coding theory, e.g., to the LDPC codes, as pioneered by Margulis~\cite{M82}.
Besides being a purely combinatorial challenge in a higher dimension in contrast to known results in dimension~2, with potential applications in computer science, 
our motivation to find explicitly such graphs comes from several recent results in
geometric group theory and in metric geometry. For instance, uniformly bounded degree $r\geqslant 3$ large girth dg-bounded graphs are required
in the constructions of infinite finitely generated groups with  prescribed subgraphs in their Cayley graphs, so-called `infinite monster groups'.
A foundational example is Gromov's random groups that contain infinite expander families in their Cayley graphs~\cite{GRW,ArzDel08}.
Gromov's monster groups do not coarsely embed into a Hilbert space and are counterexamples to an important conjecture in topology, the Baum-Connes conjecture
with coefficients~\cite{HigLafSka02}. 
For a more recent
result on monster groups, see~\cite{AT18} (where it is essential to have the large girth dg-bounded graph made of Cayley graphs) and references therein.
In all these constructions, the imposed large girth and dg-boundedness of graphs ensure the existence of appropriate small cancellation labelings of their edges. This in turn guarantees a suitable embedding of such a graph into the Cayley graph of the group (whose generators are labeling letters and relators are defined by the labels of closed cycles). On the other hand, some results in metric geometry use directly large girth and dg-boundedness  (no additional labeling is required).
For example, in~\cite{Ost}, the author constructs the first examples of regular large girth graphs with uniformly bounded $\ell_1$ distortion.
Namely, given a uniformly bounded degree large girth dg-bounded graph, he applies the results of~\cite{AGS}  to the $\mod 2$-cover of such a graph to conclude a uniform bound on the $\ell_1$ distortion. Graphs with uniformly bounded $\ell_1$ distortion are useful, for instance, in the theory of approximation algorithms.

\begin{notation}
Throughout the text, we denote by $Cay(G,S)$ the Cayley graph of a group $G$ with respect to a generating set $S\subseteq G$.
In particular, $Cay(G,S\sqcup S^{-1})$ is the same graph even if a given $S$ is not symmetric.
\end{notation}

\begin{thank}
We thank Peter Sarnak for indicating that free subgroups from our Main Theorem are thin matrix groups, see Section~\ref{sec:questions}, 
Alex Lubotzky for pointing out that our result also yields $r$-regular logarithmic girth Cayley expander graphs, for all even integer $r\geqslant 4$, see Corollaries~\ref{cor:lub} and \ref{cor:lub2} and Nikolay Nikolov for attracting our attention to Nori's result~\cite{Nori}.  We are grateful to the anonymous referees for detailed comments and useful suggestions.
\end{thank}

\section{Strategy of the proof and expansion properties}\label{sec:strategy}
Our strategy  has an advantage to guarantee the expansion properties of our graphs 
as a byproduct of previously known strong results about expanders defined through finite quotients of Zariski dense subgroups of $SL_n(\mathbb Z)$.

In detail, the proof of Main Theorem consists of four steps. For each of the choices of dimension, $n=2,3$ and $n\geqslant 4$, we prove that our matrices $A,B\in SL_n(\mathbb Z)$ satisfy the following:
\begin{itemize}
\item[(i)] $A^l$ and $B^l$ generate a free subgroup in $SL_n(\mathbb Z)$ for each $l\geqslant 1$ as in the Main Theorem ($l=1$ for $n=2$), 
\item[(ii)] $A^l_p, B^l_p$, their reduction $\mod p$, generate $SL_{n}(\mathbb{F}_{p})$ for all sufficiently large primes $p$, 
\item[(iii)] $\gi Cay(SL_n(\mathbb F_p), \{ A^l_p, B^l_p\})\geqslant c_n\log p$ for  some $c_n>0$;
\end{itemize}
In addition, we deduce:
\begin{itemize}
\item[(iv)] $Cay(SL_n(\mathbb F_p), \{ A^l_p, B^l_p\})$ as $p\to\infty$ is an expander.
\end{itemize}
Each of these assertions is new for our pair of matrices in case $n\geqslant 3$ and it seems no explicit matrices have been known with the properties (i)--(iii), and (i)--(iv), for $n\geqslant 3$.

We proceed as follows. For each of the choices of dimension, we first show properties (i) and (ii), and then use them and known results about growth of small subsets in $SL_{n}(\mathbb{F}_{p})$ to obtain a logarithmic bound on the diameter of the Cayley graphs.
We do not use the expansion properties of the involved graphs as our focus is large girth dg-bounded graphs and, a priori, such graphs need not be expanders.
The logarithmic bound on the diameter together with (iii) gives the required large girth with bounded diameter-by-girth ratio.
Although, our strategy is similar in all dimensions, dimension $n=3$ 
appears to be exceptional. Note also a smaller value of $l$ we can allow in this case in Main Theorem. 

In dimension $n=2$, we moreover have property (ii) for \emph{all} primes $p$ such that ${a,b\not\equiv 0 (\mod p)}$.
For all \emph{sufficiently large} primes $p$, (ii) can be obtained from (i) by the Matthews-Vaserstein-Weisfeiler theorem~\cite{MR735226}
as the free subgroup of $SL_2(\mathbb Z)$ is non-elementary, and hence Zariski dense. 
We do not use this deep result in our proof of (ii) for $n=2$ but present an easy direct argument for the sake of completeness.

In dimension $n\geqslant 3$,
in contrast to the two-dimensional case, assertion (i) does not yield (ii) since
in higher dimensions freeness does not imply Zariski dense, so~\cite{MR735226} does not apply immediately.
However, our computations show that assertion (ii) holds for a suitable prime $p$. 
This, combined with~\cite{W91} (see also \cite{myfav85}), implies that our free subgroup $\langle A^l,B^l\rangle$ is indeed Zariski dense, and, hence,
by~\cite{MR735226}, (ii) holds for all sufficiently large primes $p$.

The expansion properties of our sequence of Cayley graphs have not been known previously. 
Observe that generators $A^l_p, B^l_p$ of $SL_{n}(\mathbb{F}_{p})$ are \emph{not} images\footnote{For $n\geqslant 3$, $G=SL_n(\mathbb Z)$ has Kazhdan's property (T), hence any generating set $S$ and a nested family of finite index normal subgroups 
$G=G_0\trianglerighteqslant G_1 \trianglerighteqslant \cdots$ with~$\bigcap_{n=0}^{\infty}G_n=\{1\}$
naturally provide an infinite expander $\bigsqcup_{n=0}^{\infty} Cay(G/G_n, S_n),$ where $S_n$ is 
the canonical image of $S$. Such expanders are neither of large girth, nor dg-bounded.} of generators
of\   $SL_n(\mathbb Z)$.
However, as explained, using (i) for $n=2$ and (i)-(ii) for $n\geqslant 3$, the free subgroup $\langle A^l,B^l\rangle$ is Zariski dense.
It follows by \cite{MR2415383} for $n=2$ and by \cite{MR2897695} for $n\geqslant 3$ that
$Cay(SL_n(\mathbb F_p), \{ A^l_p, B^l_p\})$ as $p\to\infty$ is indeed an expander.
It appears to be the first explicit expander in dimension $n\geqslant 3$ which is large girth dg-bounded, by the Main Theorem.

\section{Dimension $n=2$}\label{sec:2}

The results of this section are certainly known to specialists, although
formulations available in the literature often restrict to the cases  $|a|=|b|=2$ or $|a|=|b|$. In our setting, $a$ and $b$ can differ and  we present arguments for completeness.

\subsection{Girth}
Our magic matrices  are $2$-by-$2$ matrices $A = \begin{pmatrix}
1 & a\\
0 & 1
\end{pmatrix}$ and $B = \begin{pmatrix}
1 & 0 \\
b & 1
\end{pmatrix}$. Enlarging the set of possible $a,b$, we take $a,b\in \mathbb{Z}, |a|, |b|\geqslant 2$. 
It is an easy consequence of the ping-pong lemma that the subgroup generated by these matrices $\langle A,B\rangle \,\leqslant SL_{2}(\mathbb{Z})$ is free.

\begin{lemma}[Ping-pong~{\cite[Ch.III, Prop.12.2]{myfav84}}]\label{pingpong}
	Let $G$ be a group acting on a set $X$, let $\Gamma_{1}, \Gamma_{2}$ be two subgroups of $G$, and let $\Gamma = \langle\Gamma_{1}, \Gamma_{2}\rangle$ be the subgroup of $G$ generated by $\Gamma_{1}$ and $\Gamma_{2}$. Suppose $|\Gamma_{1}|\geqslant 3$ and $|\Gamma_{2}|\geqslant 2$. Suppose there exist two disjoint non-empty subsets of $X$, say $X_{1}$ and $X_{2}$ with $$\gamma(X_{1}) \subset X_{2}\,\,\, \forall \gamma \in \Gamma_{2}, \gamma \neq 1 \text{ and } \gamma(X_{2}) \subset X_{1}\,\,\, \forall \gamma \in \Gamma_{1}, \gamma \neq 1.$$	
	Then, $\Gamma$ is isomorphic to the free-product $\Gamma_{1}*\Gamma_{2}$.
\end{lemma}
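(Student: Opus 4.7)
The plan is to invoke the universal property of the free product: the inclusions $\Gamma_1,\Gamma_2\hookrightarrow G$ extend to a surjective homomorphism $\varphi\colon \Gamma_1*\Gamma_2\to \Gamma$, so the statement reduces to showing that $\varphi$ is injective. Equivalently, I would verify that no non-trivial reduced word in $\Gamma_1*\Gamma_2$ acts as the identity on $X$ via $\varphi$.

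First I would dispatch the principal case: a reduced word of the form $w=g_1 h_1 g_2 h_2 \cdots h_{n-1} g_n$ with $g_i\in \Gamma_1\setminus\{1\}$ and $h_j\in \Gamma_2\setminus\{1\}$, beginning and ending with $\Gamma_1$-letters. Pick any $x\in X_2$. Reading right-to-left, the first ping-pong hypothesis gives $g_n(x)\in X_1$, then the second gives $h_{n-1}(g_n(x))\in X_2$, and by strict alternation one concludes $w(x)\in X_1$. Since $X_1\cap X_2=\emptyset$ and $X_2\neq\emptyset$, $w$ does not act as the identity, so $\varphi(w)\neq 1$.

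The remaining three shapes of reduced words I would reduce to the principal case by conjugation, using that $\varphi(w)=1$ iff $\varphi(gwg^{-1})=1$ for any $g$. If $w$ begins and ends with $\Gamma_2$-letters, any $g\in \Gamma_1\setminus\{1\}$ turns $gwg^{-1}$ into a word of the principal form. If $w$ begins in $\Gamma_1$ (leading letter $g_1$) and ends in $\Gamma_2$, I would pick $g\in \Gamma_1\setminus\{1,g_1^{-1}\}$ so that $gg_1\neq 1$ is still a valid $\Gamma_1$-letter while $g^{-1}\neq 1$ is the new trailing letter; the mirror case (begins in $\Gamma_2$, ends with $\Gamma_1$-letter $g_n$) uses $g\in \Gamma_1\setminus\{1,g_n\}$. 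Both selections require at least three distinct elements of $\Gamma_1$, which is precisely the role of the hypothesis $|\Gamma_1|\geqslant 3$; the symmetric hypothesis $|\Gamma_2|\geqslant 2$ merely guarantees that non-trivial $\Gamma_2$-letters exist so that reduced alternating words are non-empty.

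The main obstacle, as is typical for ping-pong, is the combinatorial bookkeeping at the endpoints of $w$: one must check in each shape that conjugation does not cause the outermost letters to collapse to the identity, which is exactly what dictates the cardinality hypothesis on $\Gamma_1$. Once this is handled, the injectivity of $\varphi$ is immediate and yields the isomorphism $\Gamma\cong \Gamma_1*\Gamma_2$.
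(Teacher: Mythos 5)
The paper does not actually prove this lemma; it only cites Lyndon--Schupp, [Ch.~III, Prop.~12.2]. Your argument is the standard ping-pong proof found in that reference and elsewhere: show that a reduced word beginning and ending with $\Gamma_1$-letters sends a point of $X_2$ into $X_1$ (hence is nontrivial), then handle the remaining three shapes of reduced words by conjugating with an element $g\in\Gamma_1$ chosen to avoid the identity and, where needed, the outermost boundary letter --- which is exactly where $|\Gamma_1|\geqslant 3$ is used. The proof is correct. One small inaccuracy in your commentary: the hypothesis $|\Gamma_2|\geqslant 2$ is not needed to make reduced alternating words non-empty (a single nontrivial $\Gamma_1$-letter is already one); it simply excludes the degenerate situation $\Gamma_2=\{1\}$, where the conclusion $\Gamma\cong\Gamma_1*\{1\}=\Gamma_1$ is trivially true and the ping-pong condition on $\Gamma_2$ is vacuous. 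This does not affect the validity of your proof.
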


 \begin{corollary}[Sanov's theorem]\label{cor:sanov}
 	$\langle A,B\rangle \,\leqslant SL_{2}(\mathbb{Z})$ is free.
 \end{corollary}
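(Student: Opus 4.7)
The plan is to invoke the ping-pong lemma (Lemma~\ref{pingpong}) with $G = SL_2(\mathbb{Z})$ acting on $X = \mathbb{R}^2$ (equivalently on $\mathbb{Q}^2$) in the standard linear way, taking $\Gamma_1 = \langle A\rangle$ and $\Gamma_2 = \langle B\rangle$. Since $A$ and $B$ are unipotent with $|a|,|b|\geqslant 2$, each is of infinite order, so $\Gamma_1$ and $\Gamma_2$ are both infinite cyclic; in particular $|\Gamma_1|,|\Gamma_2|\geqslant 3$, meeting the cardinality hypotheses.

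The concrete ping-pong domains I would use are
\[
X_1 = \{(x,y)\in \mathbb{R}^2 : |x|>|y|\},\qquad X_2 = \{(x,y)\in \mathbb{R}^2 : |y|>|x|\},
\]
which are disjoint and non-empty. For any nonzero integer $k$, direct computation gives
\[
A^k \begin{pmatrix} x \\ y \end{pmatrix} = \begin{pmatrix} x+ka\,y \\ y \end{pmatrix},\qquad B^k \begin{pmatrix} x \\ y \end{pmatrix} = \begin{pmatrix} x \\ kb\,x + y \end{pmatrix}.
\]
I would then verify that $A^k$ maps $X_2$ into $X_1$: if $|y|>|x|$, then $|x+ka\,y| \geqslant |ka|\,|y| - |x| \geqslant 2|y| - |x| > |y|$, using $|ka|\geqslant 2$. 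The symmetric inequality $|kb\,x + y| > |x|$ on $X_1$ shows $B^k X_1 \subset X_2$. This is exactly the ping-pong hypothesis for the nontrivial elements of $\Gamma_1$ and $\Gamma_2$.

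Applying Lemma~\ref{pingpong} then yields $\langle A,B\rangle \cong \Gamma_1 \ast \Gamma_2 \cong \mathbb{Z}\ast\mathbb{Z} = F_2$, i.e.\ freeness on the two generators. There is no serious obstacle; the only small point worth flagging is the sharpness in the inequality $(|ka|-1)|y| > |x|$, which just barely survives at the boundary $|a|=2$, $k=\pm1$, and is the reason the hypothesis $|a|,|b|\geqslant 2$ cannot be weakened to $|a|,|b|\geqslant 1$. The rest is a routine verification.
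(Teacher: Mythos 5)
Your proof is correct and is essentially the same as the paper's: both invoke the ping-pong lemma with $\Gamma_1=\langle A\rangle$, $\Gamma_2=\langle B\rangle$ acting linearly on the plane and with the same domains $X_1=\{|x|>|y|\}$, $X_2=\{|y|>|x|\}$. The only cosmetic difference is that you take $X=\mathbb{R}^2$ while the paper uses $X=\mathbb{Z}^2$; the verification and the role of the hypothesis $|a|,|b|\geqslant 2$ are identical.
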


\begin{proof}
	Consider the usual action of $SL_{2}(\mathbb{Z})$ on $X =\mathbb{Z}^{2} = \left\{\begin{pmatrix}
	x \\
	y
	\end{pmatrix} : x,y\in \Z\right\}$. Let $\Gamma_{1} = \langle A\rangle, \Gamma_{2} = \langle B\rangle $, $X_{1} = \left\lbrace \begin{pmatrix}
		x \\
	y
	\end{pmatrix} \in \mathbb{Z}^{2} : |x| > |y|\right\rbrace$ and $X_{2} = \left\lbrace \begin{pmatrix}
	x \\
	y
	\end{pmatrix} \in \mathbb{Z}^{2} : |y| > |x|\right\rbrace$ be two disjoint non-empty subsets of $X$.\\
	
	Let $\gamma_{1}\in \Gamma_{1}$ and $\gamma_{2}\in \Gamma_{2}$. Then $\gamma_{1} = \begin{pmatrix}
	1 & k_{1}a\\
	0 & 1
	\end{pmatrix}$ and $\gamma_{2} = \begin{pmatrix}
	1 & 0\\
	k_{2}b & 1
	\end{pmatrix}$ for some $k_{1}, k_{2}\in \mathbb{Z}\backslash \lbrace 0\rbrace$. Clearly,	$\gamma_{1}(X_{2}) = \left\lbrace\begin{pmatrix}
	x +k_{1}ay\\
	y
		\end{pmatrix} : |y| > |x|  \right\rbrace $ and $|x|< |y| \Rightarrow |x+k_{1}ay| > |y|$ since $|k_{1}a|\geqslant 2$ (the assumption $|a|\geqslant 2$ is essential). Hence, $\gamma_{1}(X_{2}) \subset X_{1}\,\,\forall \gamma_{1}\in \Gamma_{1} \backslash \lbrace 1 \rbrace $. Similarly, $\gamma_{2}(X_{1})\subset X_{2} \,\,\ \forall \gamma_{2}\in \Gamma_{2}\backslash \lbrace 1 \rbrace $ (with the use of assumption $|b|\geqslant 2$). The ping-pong lemma applies and we are done as 
		$\Gamma_1\cong\Gamma_2\cong\Z$.
\end{proof}

	Note that	for $a=b=1$, the subgroup $\langle A,B\rangle$ is not free. For $|a|=|b|=2$, it is of finite index in $SL_{2}(\mathbb{Z})$, while for $|a| = |b|>2$, it is of infinite index in $SL_{2}(\mathbb{Z})$. \medskip 

The main result of this subsection is Proposition \ref{mainpropsec2}. 
This fact is originally due to Margulis for $a=b=2$~\cite{M82}, cf.~\cite[Appendix A]{DSV}. 
We give our computation, which is a bit more explicit in view of its generalization to higher dimensions.

\begin{proposition}\label{mainpropsec2}
	Let $p$ be a prime. Then $\gi Cay(\langle A_{p},B_{p}\rangle,\{ A_{p}, B_{p}\})\geqslant C\log p$ for a constant $C=C(a,b)>0$.
\end{proposition}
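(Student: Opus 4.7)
The plan is to translate a short cycle in the Cayley graph into a short non-trivial reduced word in $A$ and $B$ whose image in $SL_2(\mathbb{F}_p)$ equals the identity, and then derive a contradiction with an exponential upper bound on matrix entries unless that word is sufficiently long.

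First, I would observe that a non-trivial cycle of length $g$ through the identity vertex of $Cay(\langle A_p,B_p\rangle,\{A_p,B_p\})$ corresponds to a reduced word $w$ of length $g$ in the alphabet $\{A^{\pm 1},B^{\pm 1}\}$ such that the integer matrix $w(A,B)\in SL_2(\mathbb{Z})$ reduces to the identity modulo $p$; equivalently, every entry of $w(A,B)-I$ is divisible by $p$. By Sanov's theorem (Corollary~\ref{cor:sanov}), the subgroup $\langle A,B\rangle\leqslant SL_2(\mathbb{Z})$ is free, so $w(A,B)\neq I$ as an integer matrix. Hence $w(A,B)-I$ is a non-zero integer matrix all of whose entries are multiples of $p$, and in particular some entry of $w(A,B)$ has absolute value at least $p-1$.

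Next, I would bound the entries of $w(A,B)$ by an exponential in $g$. Using any submultiplicative matrix norm, e.g.\ the operator norm $\|\cdot\|$ on $M_2(\mathbb{R})$, one gets $|w_{ij}(A,B)|\leqslant \|w(A,B)\|\leqslant R^g,$ where
$$R:=\max\bigl(\|A\|,\|A^{-1}\|,\|B\|,\|B^{-1}\|\bigr)$$
depends only on $a$ and $b$. Combined with the previous lower bound, $p-1\leqslant R^g$, so $g\geqslant \log(p-1)/\log R$. Choosing $C>0$ a bit smaller than $1/\log R$ and adjusting for small primes (where the statement is trivial) yields $g\geqslant C\log p$ for all primes $p$, as required.

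The argument is essentially automatic once Sanov's freeness is in hand; the structural point is that freeness prevents cancellation in $SL_2(\mathbb{Z})$, turning the problem of lower-bounding the girth into the routine problem of upper-bounding matrix entries. The only minor bookkeeping concerns the standard identification of cycles in $Cay(G,\{A_p,B_p\})$ with reduced words in the generating set and its inverses. No significant obstacle is anticipated in this two-dimensional setting; the genuine difficulty of the paper lies in extending this scheme to higher dimensions, where freeness must be re-established via ping-pong and where generation modulo $p$ requires deeper input (Strong Approximation and Zariski density).
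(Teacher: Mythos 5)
Your proof is correct and its core mechanism—exploit freeness over $\mathbb{Z}$ to get a nontrivial integer matrix reducing to $I \bmod p$, and pit the exponential-in-length growth of entries against the lower bound $p-1$—is exactly what the paper does. The implementation differs from the body of Section~\ref{sec:2}: there, the authors write the word in syllable form $\prod_i A^{l_i}B^{m_i}$, explicitly expand the $(1,1)$-entry $P_{11}(a,b)$ as a polynomial in $ab$, and bound it directly, which requires some bookkeeping with the exponents $l_i,m_i$ (the quantity $M=\max|l_i|,|m_i|$ enters their estimate). You instead invoke sub-multiplicativity of the operator norm with $R=\max(\|A\|,\|A^{-1}\|,\|B\|,\|B^{-1}\|)$, which cleanly gives $\|w(A,B)\|\leqslant R^{g}$ uniformly over all reduced words of length $g$ with no case-splitting on syllable structure. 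That is precisely the approach the paper adopts in the Appendix (Section~8.2) to extract explicit constants via $\gamma=\max(\sqrt{\lambda_{\max}},\sqrt{\beta_{\max}})$, so you have rediscovered the authors' own effective version. Your route buys robustness and a clean constant $1/\log R$; the paper's in-text computation of $P_{11}$ is more concrete but needs its $M$-dependent bound reconciled with the true edge-length of the path, a wrinkle your norm estimate avoids entirely.
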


\begin{proof}
The girth is the length of the shortest non-trivial cycle in $Cay(\langle A_{p},B_{p}\rangle,\{ A_{p}, B_{p}\})$. Hence, it is the minimum length of the non-trivial word in the generators $\lbrace A_{p}^{\pm 1},B_{p}^{\pm 1} \rbrace $ which represents the identity of $\langle A_{p},B_{p}\rangle$. 

Starting at the identity of $\langle A_{p},B_{p}\rangle$, we consider all non-trivial paths in the Cayley graph which return to the identity. 
We aim to show that the length of all such paths in $Cay(\langle A_{p},B_{p}\rangle,\{ A_{p}, B_{p}\})$ is at least $C\log p$ for some $C>0$, whenever $a,b$ are integers such that $a,b\not\equiv 1 (\mod p)$ and  $a,b\not\equiv 0 (\mod p)$. These assumptions on $p$ are not restrictive as $a,b$ are fixed, so there are finitely many excluded primes and the values of 
the corresponding girths are bounded above by a constant. 

The proof is an explicit analysis of the growth of the products 
$$\displaystyle\prod_{1\leqslant i\leqslant k} (A^{l_{i}}B^{m_{i}}) \in SL_{2}(\mathbb Z), \,\,  i,k\in \mathbb{N},\,\, l_{i}, m_{i}\in \mathbb{Z}\backslash \lbrace 0 \rbrace,$$ where $$A=\begin{pmatrix}
1 & a\\
0 & 1
\end{pmatrix}, B = \begin{pmatrix}
1 & 0\\
b & 1\\
\end{pmatrix}, \,\, a,b\not\equiv 1 (\mod p) \hbox{ and } a,b\not\equiv 0 (\mod p).$$ We have $A^{l_{i}} = \begin{pmatrix}
1 & l_{i}a\\
0 & 1
\end{pmatrix}, B^{m_{i}} = \begin{pmatrix}
1 & 0 \\
m_{i}b & 1
\end{pmatrix} \,\, \forall l_{i},m_{i} \in \mathbb{Z}\backslash \lbrace 0 \rbrace$, which gives \\
\\
$$A^{l_{1}}B^{m_{1}} = \begin{pmatrix}
l_{1}m_{1}ab + 1 & l_{1}a\\
m_{1}b & 1\\
\end{pmatrix},$$
$$A^{l_{1}}B^{m_{1}}A^{l_{2}}B^{m_{2}} = \begin{pmatrix}
(l_{1}m_{1}l_{2}m_{2})(ab)^{2} + (l_{1}m_{1} + l_{1}m_{2} + l_{2}m_{2})ab + 1  &  (l_{1}m_{1}l_{2})a^{2}b + (l_{1}+l_{2})a\\
(m_{1}l_{2}m_{2})ab^{2} + (m_{1}+m_{2})b & (m_{1}l_{2})ab + 1\\
\end{pmatrix},$$
and in general if we have the product of $2k$ terms then each entry of the matrix is a polynomial in $a,b$. Let us denote these polynomials by $P_{11}(a,b), P_{12}(a,b),P_{21}(a,b), P_{22}(a,b)$ such that 
$$ \displaystyle\prod_{1\leqslant i\leqslant k} A^{l_{i}}B^{m_{i}} = \begin{pmatrix}
P_{11}(a,b) & P_{12}(a,b)\\
P_{21}(a,b) & P_{22}(a,b)\\
\end{pmatrix}, \,\, l_{i}, m_{i}\in \mathbb{Z}\backslash \lbrace 0 \rbrace \,\forall i$$
then, by induction on $k$, one can obtain the explicit forms of the polynomials $P_{ij}(a,b), 1\leqslant i,j\leqslant 2$.
For instance, $P_{11}$ has the form, \\
\\
$P_{11}(a,b) = 1 + a_{2}(ab) + a_{4}(ab)^{2} + \ldots + a_{2k}(ab)^{k},$ where, for $1\leqslant r \leqslant k$, we have $$a_{2r}= \sum_{1\leqslant j_{r}\leqslant i_{r}< \ldots<j_{2}\leqslant i_{2}< j_{1}\leqslant i_{1}\leqslant k} m_{i_{1}}l_{j_{1}}m_{i_{2}}l_{j_{2}}\cdots m_{i_{r}}l_{j_{r}}.$$
\\
Since $A,B$ generate a free group, $\displaystyle\prod_{1\leqslant i\leqslant k} A^{l_{i}}B^{m_{i}} \neq \rm{id}$ in $SL_{2}(\mathbb{Z})$ and so starting from the identity a necessary condition for the path along this product to reach the identity in $Cay(\langle A_{p},B_{p}\rangle, \{A_p,B_p\})$ is $|P_{ij}(a,b)| > p$, for some $1\leqslant i,j\leqslant 2$. Let $k$ denote the length of such a path. Then, $$|P_{ij}(a,b)| \leqslant M^{k}|(ab+1)|^{k},$$ where $M = \max \lbrace |l_{i}|,|m_{i}| : 1\leqslant i\leqslant k \rbrace.$ This yields a constant $C = C(a,b)> 0$ such that 
$$k > C\log p.$$  
From the above, it is also clear that the same holds when $l_{1} = 0$ or $m_{k} = 0$.
Since Cayley graphs are vertex transitive and we have shown that starting from the identity it takes the product of at least $C\log p$ matrices of the form $A_{p}^{\pm 1},B_{p}^{\pm 1}$ to reach the identity non-trivially, we conclude that the girth of $Cay(\langle A_{p},B_{p}\rangle, \{A_p,B_p\})$ is at least $C \log p$.
\end{proof}

\subsection{Diameter} 
The diameter problem for the finite (simple) linear groups has been studied extensively and there exists a vast literature on the subject.
We follow our strategy from Section~\ref{sec:strategy} and make sure that our elements $A_p$ and $B_p$ generate the entire $SL_{2}(\mathbb{F}_{p})$. In dimension $2$, this fact is easy.
\begin{lemma}\label{lemsec2}
	$\langle A_p, B_p\rangle =SL_{2}(\mathbb{F}_{p})$ for all primes $p$.
\end{lemma}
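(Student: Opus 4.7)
The plan is to reduce the generation statement to the classical fact that the upper and lower unipotent subgroups together generate $SL_{2}(\mathbb{F}_p)$. First, I would note that the assertion is only sensible when $a,b\not\equiv 0\pmod p$ (otherwise $A_p$ or $B_p$ is the identity), which is the standing hypothesis from the Main Theorem.

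Under that hypothesis, $a$ and $b$ are units in $\mathbb{F}_p$, so the cyclic subgroup $\langle A_p\rangle$ consists of all matrices $\begin{pmatrix}1 & ka\\0 & 1\end{pmatrix}$, $k\in\Z$; since $a$ is invertible $\bmod\,p$, the values $ka\bmod p$ sweep out all of $\mathbb{F}_p$, hence $\langle A_p\rangle$ equals the entire upper unipotent subgroup $U^{+}=\{E_{12}(x):x\in\mathbb{F}_p\}$. The same argument applied to $B_p$ shows $\langle B_p\rangle=U^{-}=\{E_{21}(y):y\in\mathbb{F}_p\}$. Thus $\langle A_p,B_p\rangle\supseteq\langle U^{+},U^{-}\rangle$, and it suffices to prove that these two unipotent subgroups generate $SL_{2}(\mathbb{F}_p)$.

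For this last step I would use the well-known identity, valid for any unit $t\in\mathbb{F}_p^{*}$,
$$
\begin{pmatrix}1 & t\\0 & 1\end{pmatrix}\begin{pmatrix}1 & 0\\-t^{-1} & 1\end{pmatrix}\begin{pmatrix}1 & t\\0 & 1\end{pmatrix}=\begin{pmatrix}0 & t\\-t^{-1} & 0\end{pmatrix},
$$
which exhibits the Weyl element (taking $t=1$) and, by multiplying two such expressions with different values of $t$, produces every diagonal matrix $\operatorname{diag}(s,s^{-1})$ with $s\in\mathbb{F}_p^{*}$. Together with $U^{+}$ this gives the entire Borel subgroup of upper-triangular matrices, and together with the Weyl element one recovers all of $SL_{2}(\mathbb{F}_p)$ via the Bruhat decomposition $SL_{2}(\mathbb{F}_p)=B\sqcup BwB$. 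Equivalently, one can argue by Gauss elimination: every element of $SL_{2}(\mathbb{F}_p)$ is a finite product of elementary matrices $E_{12}(x)$ and $E_{21}(y)$.

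There is no real obstacle here: the two steps are a direct invertibility observation in $\mathbb{F}_p$ and an entirely standard computation inside $SL_{2}(\mathbb{F}_p)$. The only mild subtlety worth flagging in the write-up is to make the hypothesis $a,b\not\equiv 0\pmod p$ explicit, since finitely many primes (those dividing $a$ or $b$) must be excluded from the statement.
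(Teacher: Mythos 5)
Your argument is correct and conceptually the same as the paper's: both exploit the invertibility of $a$ and $b$ modulo $p$ to recover the standard unipotent generators from powers of $A_p$ and $B_p$, and then invoke the classical fact that unipotent (elementary) matrices generate $SL_2$. The paper's route is to observe that some powers $A_p^l, B_p^m$ equal $\left(\begin{smallmatrix}1&1\\0&1\end{smallmatrix}\right)$ and $\left(\begin{smallmatrix}1&0\\1&1\end{smallmatrix}\right)$, recall that these two matrices generate $SL_2(\Z)$, and then use surjectivity of the reduction $SL_2(\Z)\twoheadrightarrow SL_2(\mathbb{F}_p)$; you instead stay entirely inside $SL_2(\mathbb{F}_p)$, identify $\langle A_p\rangle=U^+$ and $\langle B_p\rangle=U^-$, and conclude via the Weyl element and the Bruhat decomposition (equivalently, elementary row reduction). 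The two are equivalent in content; your version avoids the detour through $\Z$ and is slightly more self-contained. Your remark that the phrase ``for all primes $p$'' tacitly requires $a,b\not\equiv 0\pmod p$ is a fair one: the paper's own proof makes the same implicit assumption when producing the power $A_p^l$ equal to the standard unipotent, and the Main Theorem carries this hypothesis explicitly.
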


\begin{proof}
	Fix a prime $p$ and note that $A^{l} \equiv \begin{pmatrix}
	1 & 1 \\
	0 & 1
	\end{pmatrix} (\mod p)$ and $B^{m} \equiv \begin{pmatrix}
	1 & 0\\
	1 & 1
	\end{pmatrix} (\mod  p)$ for some $1\leqslant l,m\leqslant p$. 
	Since the $\mod p$ reduction $SL_{2}(\mathbb Z)\twoheadrightarrow SL_{2}(\mathbb{F}_{p})$ is surjective and 
	$\begin{pmatrix}
	1 & 1 \\
	0 & 1
	\end{pmatrix}$ and
	$\begin{pmatrix}
	1 & 0\\
	1 & 1
	\end{pmatrix}$
	generate $SL_{2}(\mathbb Z)$, then their $\mod p$ reduction generate $SL_{2}(\mathbb{F}_{p})$.
	It follows that  $\langle A_p,B_p\rangle=SL_{2}(\mathbb{F}_{p})$, for all primes $p$.
\end{proof}

In contrast, it is a highly non-trivial fact that the diameter of $Cay(SL_{2}(\mathbb{F}_{p}), \lbrace A_{p}, B_{p}\rbrace)$ is $O(\log p)$. 
As alluded to in the introduction, one way is to use the expansion properties of the sequence as $p\to\infty$.
Another way is to apply the circle method to show that any element of $SL_{2}(\mathbb{F}_{p})$ lifts to an element of $SL_{2}(\mathbb{Z})$ having word representation of $O(\log p)$~\cite{LPS} (see also~\cite{Larsen} for an efficient algorithm, although it gives $O(\log p \log \log p)$ only).  We take yet an alternative way 
and use an aspect of the seminal work of Helfgott \cite{myfav22}.
\begin{theorem}[Helfgott \cite{myfav22}]
	Let $p$ be a prime. Let $S$ be any generating set of $SL_{2}(\mathbb{F}_p)$.
	Then the Cayley graph $Cay(SL_{2}(\mathbb{F}_p), S)$ has diameter $O((\log p)^{c})$, where $c$ and the implied constant
	are absolute.
\end{theorem}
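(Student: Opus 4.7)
The plan is to reduce the diameter bound to Helfgott's celebrated \emph{triple-product theorem}: there exist absolute constants $\delta>0$ and $C_0>0$ such that for every symmetric generating set $A\subseteq SL_2(\mathbb{F}_p)$ containing the identity, either $A\cdot A\cdot A=SL_2(\mathbb{F}_p)$, or $|A\cdot A\cdot A|\geqslant C_0|A|^{1+\delta}$. Granted this dichotomy, the diameter bound follows by a standard iteration. Set $A_0:=S\cup S^{-1}\cup\{1\}$ and $A_{k+1}:=A_k\cdot A_k\cdot A_k$, so that $A_k$ consists of elements of $S$-word length at most $3^k$. Either some $A_k$ already fills $SL_2(\mathbb{F}_p)$; or $|A_k|$ grows as $|A_0|^{(1+\delta)^k}$ (up to the $C_0$ factor) until it saturates near $|SL_2(\mathbb{F}_p)|\asymp p^3$, which forces $(1+\delta)^k=O(\log p)$ and hence $k=O(\log\log p)$. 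Either way, the diameter is at most $3^k=(\log p)^{O(1)}$.

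The core task is thus the triple-product theorem itself. I would prove it by combining three ingredients. First, the Bourgain--Katz--Tao sum-product estimate in $\mathbb{F}_p$: if $B\subseteq\mathbb{F}_p$ satisfies $p^\varepsilon\leqslant|B|\leqslant p^{1-\varepsilon}$, then $\max(|B+B|,|B\cdot B|)\geqslant |B|^{1+\eta(\varepsilon)}$. Second, the elementary algebraic geometry of $SL_2$: one studies the trace map $\mathrm{tr}\colon SL_2(\mathbb{F}_p)\to\mathbb{F}_p$, the conjugacy classes indexed by the trace, and the structure of maximal tori and Borel subgroups. Third, an \emph{escape from subvarieties} lemma: since $A$ generates, a uniformly bounded number of products of elements of $A$ cannot remain trapped inside any fixed proper algebraic subgroup of $SL_2$. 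Combining the last two ingredients with a Larsen--Pink type non-concentration estimate shows that a slowly growing $A$ must contain, inside some $A^{O(1)}$, a substantial subset of a maximal torus $T\cong\mathbb{F}_p^\times$, together with elements that conjugate $T$ to a transverse torus. Feeding $\mathrm{tr}(A^{O(1)}\cap T)\subseteq\mathbb{F}_p$ into the sum-product estimate produces simultaneous additive and multiplicative growth which, translated back to $SL_2$ via the Pl\"unnecke--Ruzsa inequalities, contradicts the assumed slow tripling and closes the dichotomy.

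The main obstacle is the delicate orchestration of the additive-combinatorial input with the non-commutative algebraic structure of $SL_2$: one must simultaneously keep track of $|A\cap T|$ for various tori $T$, of the trace image and its iterated sum and product sets, of the centraliser structure of individual elements of $A$, and of how off-torus elements of $A$ disperse $T$ into transverse directions. Once these estimates are assembled, the triple-product theorem yields the dichotomy $|A\cdot A\cdot A|\geqslant C_0|A|^{1+\delta}$, and the iteration above delivers the claimed diameter bound $O((\log p)^c)$ for an absolute constant $c$.
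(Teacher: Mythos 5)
The paper does not prove this statement at all: it is quoted verbatim from Helfgott \cite{myfav22} as a black box, and the authors then import from the same paper the sharper ``Key Proposition'' (growth/tripling dichotomy), of which they use only part~(2) for their own diameter estimates. So there is no in-paper proof against which to compare yours.

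Your reduction of the diameter bound to the tripling dichotomy is correct: with $A_0=S\cup S^{-1}\cup\{1\}$ and $A_{k+1}=A_k^3$, either some $A_k$ fills $SL_2(\mathbb F_p)$, or $|A_k|\gtrsim |A_0|^{(1+\delta)^k}$ until saturation near $p^3$, forcing $(1+\delta)^k=O(\log p)$, hence $k=O(\log\log p)$ and $\di\leqslant 3^k=(\log p)^{O(1)}$ with an absolute exponent because $\delta$ is absolute. The only small care needed is to absorb the multiplicative constant $C_0$ at the first few iterations (handled by first ensuring $|A_k|$ exceeds a fixed absolute threshold using the fact that $S$ generates), but this is routine and you clearly know it. Your roadmap of the tripling theorem itself --- Bourgain--Katz--Tao sum-product in $\mathbb F_p$, escape from proper algebraic subgroups/subvarieties, non-concentration of a slowly growing set on tori (the Larsen--Pink-type ingredient), analysis via the trace map, and transfer back via Pl\"unnecke--Ruzsa --- is a faithful high-level description of Helfgott's argument and of its modern reworkings. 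That said, what you have written is a plan, not a proof: the ``delicate orchestration'' you identify as the main obstacle is precisely where all the content lies, and none of it is carried out. Since the paper itself treats the statement as a citation rather than something to be re-proved, this is an acceptable level of detail for present purposes, but it would not stand on its own as a proof of Helfgott's theorem.

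One further remark worth making explicit: for the purposes of this paper, the full quasi-polynomial diameter bound is not actually needed. Because the generators here lift to a free subgroup of $SL_2(\mathbb Z)$, the ball of radius $\asymp\log p$ already has size $\gtrsim p^{\delta}$, so one can enter directly at part~(2) of Helfgott's Key Proposition and conclude a \emph{genuinely logarithmic} diameter bound, as the paper does in Lemma~\ref{mainlem2}. Your argument proves the stronger, more general statement but at the cost of proving the full growth dichotomy, which the paper deliberately avoids redoing.
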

To establish this  theorem Helfgott showed the following result.
\begin{proposition}[Key proposition \cite{myfav22}]\label{keyprop2}
	Let $p$ be a prime. Let $S$ be a subset of $SL_{2}(\mathbb{F}_p)$ not contained
	in any proper subgroup. 
	\begin{enumerate}
		\item 	Assume that $|S| < p^{3-\delta}$ for some fixed $\delta > 0$. Then
		$$|S^{3}| > c|S|^{1+\epsilon}$$ where $c > 0$ and $\epsilon > 0$ depend only on $\delta$.	
		\item Assume that $|S| > p^{\delta}$ for some fixed $\delta > 0$. Then there is an integer $k > 0$,
		depending only on $\delta$, such that every element of $SL_{2}(\mathbb F_p)$ can be expressed as a
		product of at most $k$ elements of $S\sqcup S^{-1}$.
	\end{enumerate}

\end{proposition}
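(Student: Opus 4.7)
The plan is to follow Helfgott's approach from \cite{myfav22}, in which part (1) is the technical heart and part (2) is extracted from (1) by iteration. For part (1) I would argue by contradiction: assume $|SSS| < K|S|$ with $K = |S|^\epsilon$ for a sufficiently small $\epsilon > 0$, and replace $S$ by $S \cup S^{-1} \cup \{\id\}$. By the non-commutative Ruzsa-Pl\"unnecke calculus of Tao, all product sets $S^{O(1)}$ then have size $O(K^{O(1)}|S|)$, so producing genuine polynomial growth $|S^{O(1)}| \gg |S|^{1+\epsilon'}$ for some $\epsilon' > \epsilon$ will contradict the tripling hypothesis. The central tool is the trace map $\mathrm{tr}\colon SL_2(\mathbb{F}_p) \to \mathbb{F}_p$ together with the identity $\mathrm{tr}(gh) + \mathrm{tr}(gh^{-1}) = \mathrm{tr}(g)\,\mathrm{tr}(h)$, which converts matrix multiplication into mixed additive-multiplicative structure on the set of traces.

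The argument then splits into two cases according to the size of $\mathrm{tr}(S)$. In the large-trace case, where $|\mathrm{tr}(S)| > |S|^{1-\eta}$ for suitable $\eta$, the identity above places $\mathrm{tr}(S^{O(1)})$ in an additive/multiplicative configuration to which the Bourgain-Katz-Tao sum-product theorem in $\mathbb{F}_p$ applies, forcing $|\mathrm{tr}(S^{O(1)})| \gg |\mathrm{tr}(S)|^{1+\epsilon'}$ and hence genuine growth of $|S^{O(1)}|$. In the small-trace case, pigeonhole produces a single non-central trace value $t \neq \pm 2$ attained by many elements of $S$; these lie in a common regular semisimple conjugacy class whose centralizer is a maximal torus $T_0$. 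I would then apply a quantitative escape-from-subvarieties lemma to produce elements $g_1, \ldots, g_r \in S^{O(1)}$ whose conjugates $g_i T_0 g_i^{-1}$ are pairwise distinct maximal tori, and use a ping-pong style argument among them to force growth, exploiting the fact that any two distinct maximal tori of $SL_2$ jointly cover a Zariski-open subset of the group.

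For part (2) I would simply iterate part (1). Setting $S_k := S_{k-1}^3$ and starting from $|S_0| = |S| > p^\delta$, at each step either $|S_k| \geqslant c|S_{k-1}|^{1+\epsilon}$, or the hypothesis of (1) fails at $S_{k-1}$, forcing $|S_{k-1}| \geqslant p^{3-\delta}$ (the ``not contained in a proper subgroup'' clause propagates automatically since $S$ already has it). After $O(\epsilon^{-1} \log(1/\delta))$ iterations one reaches $|S_k| \gtrsim p^3 \asymp |SL_2(\mathbb{F}_p)|$, whereupon an elementary counting argument (if $|T| \geqslant \alpha |G|$ then $T \cdot T^{-1} \cdot T = G$) yields $S^{O(1)} = SL_2(\mathbb{F}_p)$. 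The hardest step is the small-trace case of part (1): extracting genuine polynomial growth from a single conjugacy class requires a sharp quantitative escape-from-subvarieties lemma, careful tracking of how many pairwise distinct tori arise among the conjugates produced, and uniform control of all implicit constants in the prime $p$. It is precisely here that the algebraic-geometric input of Helfgott's argument is most delicate and where any cleaner approach would need to be introduced.
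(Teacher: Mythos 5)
This proposition is imported verbatim from Helfgott \cite{myfav22}; the paper does not reprove it and treats it as a black box, so there is no internal proof to compare against. Your sketch of part (1) is consonant with the main architecture of Helfgott's argument: the non-commutative Pl\"unnecke--Ruzsa reductions, the trace identity $\mathrm{tr}(gh)+\mathrm{tr}(gh^{-1})=\mathrm{tr}(g)\mathrm{tr}(h)$, the Bourgain--Katz--Tao sum-product theorem, escape from subvarieties, and the interaction with maximal tori are the right moving parts. One sentence is false as stated, though: two distinct maximal tori of $SL_2$ are one-dimensional subvarieties of a three-dimensional group and cannot jointly ``cover a Zariski-open subset.'' What is actually used is that the union of \emph{all} conjugates of a fixed torus covers the regular semisimple locus, and Helfgott's argument produces and controls \emph{many} pairwise distinct conjugate tori, not two.

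The genuine gap is in your finish of part (2). Iterating part (1) only drives $|S^{3^k}|$ up to roughly $p^{3-\delta}$ --- once the size exceeds $p^{3-\delta}$ the hypothesis of (1) no longer applies and there is no control on how far it overshoots --- and $p^{3-\delta}$ is a \emph{vanishing} proportion of $|SL_2(\mathbb{F}_p)|\sim p^3$. The ``elementary counting argument'' you invoke (that $|T|\geqslant\alpha|G|$ forces $T\,T^{-1}T=G$) needs $\alpha$ bounded away from $0$, which you do not have. The correct finish is a quasirandomness input: since the minimal degree of a nontrivial complex representation of $SL_2(\mathbb{F}_p)$ is $(p-1)/2$, Gowers' lemma (precisely Proposition~\ref{propGow}, which the paper itself states and uses in its $n=3$ diameter argument) gives $A\cdot B\cdot C=G$ as soon as $|A||B||C|>|G|^3/\bigl((p-1)/2\bigr)$, i.e.\ once $|S^{3^k}|\gtrsim p^{8/3}$. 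One may WLOG assume $\delta\leqslant 1/3$ (shrinking $\delta$ only weakens the hypothesis of (1)), so that $p^{3-\delta}\geqslant p^{8/3}$, and then iteration followed by Gowers' lemma yields $S^{3^{k+1}}=SL_2(\mathbb{F}_p)$. Without this representation-theoretic step (or the equivalent spectral/trace estimate in Helfgott's original text), the iteration stalls strictly short of the whole group.
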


We now obtain the required upper bound on the diameter of our graphs.
\begin{lemma}\label{mainlem2}
	The diameter of $Cay(SL_{2}(\mathbb{F}_{p}), \lbrace A_{p}, B_{p}\rbrace)$ is $O(\log p)$, where the implied constant depends on $a$ and $b$.
\end{lemma}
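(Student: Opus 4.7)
The plan is to combine the logarithmic girth bound of Proposition~\ref{mainpropsec2} with part~(2) of Helfgott's key proposition. Set $S = \{A_p^{\pm 1}, B_p^{\pm 1}\}$ and write $B(r) \subseteq SL_2(\mathbb F_p)$ for the ball of radius $r$ around the identity in $Cay(SL_2(\mathbb F_p), S)$. The idea is first to use the girth to certify that $B(r)$ already contains $p^{\delta_0}$ elements at some scale $r = O(\log p)$, and then to invoke Proposition~\ref{keyprop2}(2) to deduce that only a bounded number of such balls is needed to cover all of $SL_2(\mathbb F_p)$.

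For the ball growth, recall from Corollary~\ref{cor:sanov} that $\langle A, B\rangle \leqslant SL_2(\mathbb{Z})$ is free of rank two, so the ball of radius $r$ in the Cayley graph of $F_2 = \langle A, B\rangle$ with respect to $\{A^{\pm 1}, B^{\pm 1}\}$ has at least $3^r$ elements. The girth lower bound $\gi Cay(SL_2(\mathbb F_p), S) \geqslant C \log p$ means that no non-trivial reduced word of length $\leqslant C \log p$ in the generators represents the identity, so the reduction $F_2 \twoheadrightarrow \langle A_p, B_p \rangle$ is injective on every ball of radius $r$ with $2r + 1 \leqslant C \log p$. Consequently, $|B(r)| \geqslant 3^r$ for all such $r$. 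Taking $r_p := \lfloor (C \log p)/3 \rfloor$ yields a fixed $\delta_0 > 0$ (independent of $p$) with $|B(r_p)| \geqslant p^{\delta_0}$.

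For the covering step, observe that $B(r_p)$ is symmetric and contains $S$, hence is not contained in any proper subgroup of $SL_2(\mathbb F_p)$, since $\langle S\rangle = SL_2(\mathbb F_p)$ by Lemma~\ref{lemsec2}. Proposition~\ref{keyprop2}(2) applied with $\delta = \delta_0$ then supplies an absolute integer $k = k(\delta_0)$ such that every element of $SL_2(\mathbb F_p)$ is a product of at most $k$ elements of $B(r_p) \sqcup B(r_p)^{-1} = B(r_p)$. Since every element of $B(r_p)$ is a product of at most $r_p$ elements of $S$, every element of $SL_2(\mathbb F_p)$ is a product of at most $k \cdot r_p = O(\log p)$ elements of $S$, which is the asserted diameter bound.

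No new obstacle is expected: the two inputs (logarithmic girth and Helfgott's product-and-covering theorem) are already available in the excerpt. The conceptual point worth emphasising in the write-up is that the girth provides the ``jump-start'' of polynomial size $p^{\delta_0}$ in a ball of logarithmic radius; without it, Helfgott's theorem applied to $S$ itself would only give the weaker diameter bound $O((\log p)^c)$ quoted earlier.
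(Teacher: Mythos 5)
Your proof is correct and is essentially the same argument as the paper's: both certify, via freeness of $\langle A,B\rangle$ and the $\gi \geqslant C\log p$ bound, that a ball of logarithmic radius already has size at least $p^{\delta}$ for a fixed $\delta>0$, and then invoke Proposition~\ref{keyprop2}(2) to cover $SL_2(\mathbb F_p)$ by a bounded power of that ball. The only differences are cosmetic (your explicit injectivity-on-small-balls remark and the choice of radius $\lfloor C\log p/3\rfloor$ versus the paper's $\tfrac{c}{6}\log p$).
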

\begin{proof}
	By Lemma \ref{lemsec2},  the group $\langle A,B\rangle $ surjects onto $SL_{2}(\mathbb{F}_{p})$. We know that $\langle A,B\rangle $ is a free subgroup in $SL_{2}(\mathbb{Z})$ and the girth of $Cay(SL_{2}(\mathbb{F}_{p}), \lbrace A_{p}, B_{p}\rbrace)$ is at least $c\log p$ for some constant $c=c(a,b)>0$. 
	Therefore, denoting $S=\lbrace A_{p}^{\pm 1}, B_{p}^{\pm 1}\rbrace$, we have  $$|S^{\frac{c}{6}\log p}| \geqslant 3^{\frac{c}{6}\log p}.$$
	Choosing $S' = S^{\frac{c}{6}\log p}$, we find ourselves in $(2)$ of Proposition \ref{keyprop2} (with $\delta < \frac{c}{6}\log 3$),
	and hence $(S')^{k} = SL_{2}(\mathbb{F}_{p}) \,\,\forall p$, where $k$ depends only on $\delta$. Therefore, $S^{kc\log p} = SL_{2}(\mathbb{F}_{p})$, which means that the diameter  of $Cay(SL_{2}(\mathbb{F}_{p}),\lbrace A_{p}, B_{p}\rbrace)$ is $O(\log p)$.
\end{proof}

	Since we have a generating set which is free in $SL_{2}(\mathbb{Z})$, the growth of balls in $SL_{2}(\mathbb{F}_{p})$  is fast at the beginning (up to $\gi $ scale). Therefore, we only used part $(2)$ of Proposition \ref{keyprop2} in the preceding proof. This estimate on the diameter (with the same argument) also appeared, 
	for example, in~\cite[Corollary~6.3]{myfav22}.

\begin{corollary}\label{cor:dg2}
Let $a,b\in \mathbb{Z}\backslash \lbrace 0,1\rbrace$.
	The diameter-by-girth ratio of the sequence of Cayley graphs of $G = SL_{2}(\mathbb{F}_{p})$ with respect to $S = \lbrace A_{p}^{\pm 1}, B_{p}^{\pm 1}\rbrace$, as $p\to\infty$, where $A = \begin{pmatrix}
	1 & a\\
	0 & 1
	\end{pmatrix}, B =  \begin{pmatrix}
	1 & 0\\
	b & 1
	\end{pmatrix}$ is bounded by a constant.
\end{corollary}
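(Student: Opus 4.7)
The plan is to simply combine the two main results of this section. Proposition~\ref{mainpropsec2} gives, for every prime $p$ with $a,b\not\equiv 0\,(\bmod\,p)$ and $a,b\not\equiv 1\,(\bmod\,p)$, a lower bound
$$\gi Cay(SL_2(\mathbb F_p),S)\geqslant C\log p$$
with $C>0$ absolute, where we used Lemma~\ref{lemsec2} to identify $\langle A_p,B_p\rangle$ with $SL_2(\mathbb F_p)$. Lemma~\ref{mainlem2} gives, for every prime $p$, an upper bound
$$\di Cay(SL_2(\mathbb F_p),S)\leqslant C'\log p$$
for some absolute constant $C'>0$. Dividing these two bounds immediately yields $\di/\gi \leqslant C'/C$, an absolute constant, for every prime $p$ outside the finite exceptional set.

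The only thing to watch is the handful of primes excluded from Proposition~\ref{mainpropsec2}, namely the prime divisors of $a$, $b$, $a-1$, and $b-1$. Since $a,b\in\Z\setminus\{0,1\}$ are fixed, this exceptional set is finite, and for each such prime $p$ the graph $Cay(SL_2(\mathbb F_p),S)$ is a single finite graph whose diameter-by-girth ratio is a concrete finite number (with the convention that $\gi=\infty$ if the graph happens to be acyclic, in which case the ratio is taken to be $0$). The absolute constant $D$ in the statement of \emph{dg}-boundedness is then obtained as the maximum of $C'/C$ and the finitely many exceptional ratios.

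The main obstacle — proving that the diameter is $O(\log p)$ for this particular generating set — has already been handled in Lemma~\ref{mainlem2} via Helfgott's product theorem (Proposition~\ref{keyprop2}(2)), exploiting the free-group growth of balls of radius up to the girth scale to quickly surpass the threshold $p^{\delta}$ needed to apply Helfgott's escape-from-subgroups estimate. Once that input is in place, the corollary itself is a one-line consequence and requires no further genuine work.
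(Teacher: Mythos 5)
Your proposal is correct and follows essentially the same route as the paper: combine the girth lower bound from Proposition~\ref{mainpropsec2} with the diameter upper bound from Lemma~\ref{mainlem2} and take the quotient of constants. The paper's own proof is a two-line version of exactly this; your extra paragraph on the finitely many exceptional primes is a sensible sanity check (the paper absorbs those primes into the constant $C$ already inside the proof of Proposition~\ref{mainpropsec2}), though be aware that for a prime $p$ dividing $a$ or $b$ the set $S$ fails to generate $SL_2(\mathbb F_p)$ and the corresponding Cayley graph is disconnected with infinite diameter, so those primes must simply be excluded from the sequence rather than folded in via a finite ratio.
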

\begin{proof}
	By Proposition \ref{mainpropsec2} and Lemma \ref{mainlem2}, there exist constants $K_{1} = K_{1}(a,b)>0$ and $K_{2} = K_{2}(a,b)>0$ such that $\gi Cay(G,S) > K_{1}\log p$ and $\di Cay(G,S) < K_{2}\log p$, respectively. Hence, $\frac{\di Cay(G,S)}{\gi Cay(G,S)} < \frac{K_{2}}{K_{1}}$.
\end{proof}

\section{Dimension $n=3$}

\subsection{Girth}
For $n=3$ the situation is more complicated. The primary difficulty is to find suitable candidates for our free subgroup in $SL_{3}(\mathbb Z)$. 
The existence of such free subgroups is a well-known fact but explicit examples seem not to be present in the literature.
Indeed, apart from a few special cases the ping-pong lemma, Lemma~\ref{pingpong}, is the universal way one can establish that two elements generate a non-abelian free subgroup. The challenge with the ping-pong lemma is that it is a non-trivial problem to find an explicit description of disjoint non-empty subspaces $X_{1}$ and $X_{2}$ such that $\gamma_{1}(X_{2})\subset X_{1}$ and $\gamma_{2}(X_{1})\subset X_{2}$. In higher dimensions, each of  the sets $X_{1}$ and $X_{2}$ might be a union of smaller subsets. We get around this difficulty by 
a direct computation on growth of some products.  We show that fourth powers of our $A$ and $B$  
generate a free group inside $SL_{3}(\mathbb{Z})$. Then we use them to produce a new large girth dg-bounded sequence of finite Cayley graphs.

\begin{proposition}\label{mainpropsec3} Fix $a,b\in \mathbb{N}, a,b\geqslant 2.$
	Let $A,B\in SL_{3}(\mathbb{Z})$ be such that $$A = \begin{pmatrix}
	1 & a & 0\\
	0 & 1 & a\\
	0 & 0 & 1
	\end{pmatrix}, \,B = \begin{pmatrix}
	1 & 0 & 0\\
	b & 1 & 0\\
	0 & b & 1
	\end{pmatrix}
	$$
	
	Then, $\langle A^{4},B^{4}\rangle$ is a free subgroup of $SL_{3}(\mathbb{Z})$.
\end{proposition}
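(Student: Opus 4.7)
The plan is to carry out the quantitative domination strategy sketched in the authors' commented computation, rather than attempt a direct ping-pong. Set $X := A^4$ and $Y := B^4$, which are explicit unipotent matrices with dominant corner entries $6a^2$ and $6b^2$ respectively. Since conjugation by $X^r$ or $Y^s$ does not affect the triviality of a reduced word, it suffices to show that every word of the form
\[
W = X^{r_1} Y^{s_1} X^{r_2} Y^{s_2} \cdots X^{r_k} Y^{s_k}, \qquad k \geqslant 1,\ r_i, s_i \in \mathbb{Z} \setminus \{0\},
\]
is distinct from the identity; the remaining cases (words starting with $Y$ or ending with $X$, or of pure form $X^r$ or $Y^s$) reduce to this one up to inversion and cyclic rotation, or are handled by a short separate argument on the image of such an element.

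First I would compute each factor $X^{r_i} Y^{s_i}$ explicitly, using that $X^{r_i}$ and $Y^{s_i}$ are unipotent of index three and hence given by a short binomial expansion. A direct multiplication reveals that the $(1,1)$-entry equals $4 r_i s_i a^2 b^2 (4 r_i - 1)(4 s_i - 1) + 16 r_i s_i ab + 1$, which dominates the remaining eight entries in magnitude as soon as $a, b \geqslant 2$. I would then factor out a scalar $\lambda_i$ proportional to $4 r_i s_i (4 r_i - 1)(4 s_i - 1)$, absorbing the appropriate powers of $a$ and $b$, to obtain a normalized matrix $N_i = \lambda_i^{-1} X^{r_i} Y^{s_i}$ whose entries satisfy nine explicit inequalities of the same shape as in the authors' commented computation: the $(1,1)$-entry is bounded below by a fixed positive constant close to the leading order, while every other entry in the first row and first column is bounded above by a small explicit fraction of that lower bound, with the $(3,3)$-entry being smallest.

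The heart of the argument is an inductive dominance claim. Let $\mathcal{Z}$ denote the set of matrices $Z \in SL_3(\mathbb{Z})$ whose $(1,1)$-entry strictly exceeds the sum of the absolute values of the remaining entries in the first row, plus $1$. I would verify $N_1 \in \mathcal{Z}$ by direct inspection and then show, using the entry bounds on each $N_i$, that $Z \in \mathcal{Z}$ implies $Z \cdot N_i \in \mathcal{Z}$: the new $(1,1)$-entry is the inner product of the first row of $Z$ with the first column of $N_i$, and the bounds are arranged so that the dominant $Z_{11} (N_i)_{11}$ term swamps the cross terms once $a, b \geqslant 2$. Iterating yields $N_1 N_2 \cdots N_k \in \mathcal{Z}$, so after multiplying back by $\lambda_1 \cdots \lambda_k \neq 0$ the matrix $W$ has $(1,1)$-entry strictly larger than $1$ in absolute value and in particular $W \neq \Id$. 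The main obstacle I anticipate is precisely the arithmetic bookkeeping in this induction: one must find uniform entry bounds on the $N_i$ that are robust enough to survive matrix multiplication yet tight enough to yield the strict dominance inequality. This is where both the hypothesis $a, b \geqslant 2$ and the choice of the fourth power (rather than a smaller exponent) become essential, since smaller parameters produce off-diagonal entries of the $N_i$ comparable to the $(1,1)$-entry, breaking the closure of $\mathcal{Z}$ under multiplication.
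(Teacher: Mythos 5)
Your proposal is correct and follows the paper's own approach essentially verbatim: factoring each $X^{r_i}Y^{s_i}$ as a scalar multiple of a normalized matrix $N_i$, establishing explicit entry bounds on $N_i$, and running an induction on the dominance of the $(1,1)$-entry over the sum of absolute values of the remaining first-row entries (plus $1$), exactly as in the paper's Claim 4.2. The one small slip---the scalar $\lambda_i = 4r_is_i(4r_i-1)(4s_i-1)$ is a pure integer and cannot ``absorb powers of $a$ and $b$,'' since different entries of $N_i$ carry different degrees in $a,b$---is harmless and does not affect the structure of the argument.
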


\begin{proof}
	Let $X = A^4 = \begin{pmatrix}
	1 & 4a & 6a^{2} \\
	0 & 1 & 4a \\
	0 & 0 & 1\\
	\end{pmatrix}$ and $Y = B^{4} = \begin{pmatrix}
	1 & 0 & 0 \\
	4b & 1 & 0 \\
	6b^{2} & 4b & 1\\
	\end{pmatrix}$. 
	We claim that $\langle X,Y\rangle$ is a free subgroup of $SL_{3}(\mathbb{Z})$. 
	We study products of the form $X^{r_{1}}Y^{s_{1}}\cdots X^{r_{k}}Y^{s_{k}}$ for any $r_{i},s_{i} \in \mathbb{Z}\backslash \lbrace 0\rbrace, k\in \mathbb{N}$.  The crucial step is to show that 	$$\big(X^{r_{1}}Y^{s_{1}}\big)\big(X^{r_{2}}Y^{s_{2}}\big)\cdots \big(X^{r_{k}}Y^{s_{k}}\big) \neq {\rm id} \hbox{ in } SL_3(\mathbb Z),\,\, \forall r_{i},s_{i} \in \mathbb{Z}\backslash \lbrace 0\rbrace, k\in \mathbb{N}.$$
	
	Clearly, $$X^{r_{i}}Y^{s_{i}} = \begin{pmatrix}
	1 & 4ar_{i} & \frac{4r_{i}a^{2}(4r_{i}-1)}{2}\\
	0 & 1 & 4ar_{i}\\
	0 & 0 & 1\\ 
	\end{pmatrix} \times \begin{pmatrix}
	1 & 0 & 0\\
	4bs_{i} & 1 & 0\\
	\frac{4s_{i}b^{2}(4s_{i}-1)}{2} & 4bs_{i} & 1 \\ 
	\end{pmatrix} $$
	
	$$= 4abr_{i}s_{i} \begin{pmatrix}
	ab(4r_{i}-1)(4s_{i} - 1) + 4 + \frac{1}{4abr_{i}s_{i}} & 2a(4r_{i}-1) + \frac{1}{bs_{i}} & \frac{a(4r_{i}-1)}{2bs_{i}}\\
	2b(4s_{i}-1) + \frac{1}{ar_{i}} & 4 + \frac{1}{4abr_{i}s_{i}} & \frac{1}{bs_{i}}\\
	\frac{b(4s_{i}-1)}{2ar_{i}} & \frac{1}{ar_{i}} & \frac{1}{4abr_{i}s_{i}}
	\end{pmatrix} =
	4r_{i}s_{i}(4r_{i}-1)(4s_{i} - 1)\times $$
	$$\begin{pmatrix}
	x^{2} + \frac{4x}{(4r_{i}-1)(4s_{i} - 1)} + \frac{1}{4r_{i}s_{i}(4r_{i}-1)(4s_{i} - 1)} & \frac{2xa}{(4s_{i} - 1)} + \frac{a}{s_{i}(4r_{i}-1)(4s_{i} - 1)} & \frac{a^{2}}{2s_{i}(4s_{i} - 1)}\\
	
	\frac{2xb}{(4r_{i}-1)}+ \frac{b}{r_{i}(4r_{i}-1)(4s_{i} - 1)} & \frac{4x}{(4r_{i}-1)(4s_{i} - 1)}+ \frac{1}{4r_{i}s_{i}(4r_{i}-1)(4s_{i} - 1)} & \frac{a}{s_{i}(4r_{i}-1)(4s_{i} - 1)}\\
	
	\frac{b^{2}}{2r_{i}(4r_{i}-1)} & \frac{b}{r_{i}(4r_{i}-1)(4s_{i} - 1)} & \frac{1}{4r_{i}s_{i}(4r_{i}-1)(4s_{i} - 1)}
	\end{pmatrix},$$
	where $x=ab\geqslant 4$. Thus, the above expression is equal to 
	$$ 4r_{i}s_{i}(4r_{i}-1)(4s_{i} - 1)N_{i},$$ where 
	$$N_{i} = \begin{pmatrix}
	x^{2} + \frac{4x}{(4r_{i}-1)(4s_{i} - 1)} + \frac{1}{4r_{i}s_{i}(4r_{i}-1)(4s_{i} - 1)} & \frac{2xa}{(4s_{i} - 1)} + \frac{a}{s_{i}(4r_{i}-1)(4s_{i} - 1)} & \frac{a^{2}}{2s_{i}(4s_{i} - 1)}\\
	
	\frac{2xb}{(4r_{i}-1)}+ \frac{b}{r_{i}(4r_{i}-1)(4s_{i} - 1)} & \frac{4x}{(4r_{i}-1)(4s_{i} - 1)}+ \frac{1}{4r_{i}s_{i}(4r_{i}-1)(4s_{i} - 1)} & \frac{a}{s_{i}(4r_{i}-1)(4s_{i} - 1)}\\
	
	\frac{b^{2}}{2r_{i}(4r_{i}-1)} & \frac{b}{r_{i}(4r_{i}-1)(4s_{i} - 1)} & \frac{1}{4r_{i}s_{i}(4r_{i}-1)(4s_{i} - 1)}
	\end{pmatrix} \forall i\in \mathbb{N}$$ denotes the normalised form of the product $X^{r_{i}}Y^{s_{i}}.$
	
	It is easy to check that denoting $N_{i} = \begin{pmatrix}
	a_{11_{}} & a_{12_{}} & a_{13_{}}\\
	a_{21_{}} & a_{22_{}} & a_{23_{}}\\
	a_{31_{}} & a_{32_{}} & a_{33_{}}\\
	\end{pmatrix},$ we have the (anti-)lexicographic ordering among the elements from left to right and from top to bottom in the sense that $|a_{11_{}}|> |a_{12_{}}| > |a_{13_{}}|>|a_{23_{}}| > |a_{33_{}}|, |a_{11_{}}|> |a_{12_{}}| > |a_{22_{}}| > |a_{23_{}}| > |a_{33_{}}|$, etc. In fact, we have stronger inequalities on the bounds of the values taken by $a_{ij}, \forall 1\leqslant i,j\leqslant 3$. We have: 
	\begin{enumerate}
		\item $ x^{2}+\frac{x}{2}> a_{11_{}}>x^2-\frac{x}{2}$
		\item $\frac{2}{3}xa+\frac{a}{9}\geqslant |a_{12_{}}|$
		\item $\frac{2}{3}xa+\frac{x}{9a}\geqslant |a_{21_{}}|$
		\item $ \frac{a^{2}}{6}> a_{13_{}}> 0$
		\item $\frac{a^{2}}{6}>a_{31_{}}>0$
		\item $\frac{4x}{9} + \frac{1}{9}>|a_{22_{}}|$
		\item $\frac{a}{8}>|a_{23_{}}|>0$
		\item $\frac{b}{8}>|a_{32_{}}|>0$
		\item $\frac{1}{4}>a_{33_{}}>0$
	\end{enumerate}
	
	If $N = N_{1}N_{2}\cdots N_{k}$ has the first coefficient  $>1$, then it will imply $\big(X^{r_{1}}Y^{s_{1}}\big)\big(X^{r_{2}}Y^{s_{2}}\big)\cdots\big(X^{r_{k}}Y^{s_{k}}\big) \neq {\rm id} \hbox{ in } SL_3(\mathbb Z), \,\, \forall r_{i},s_{i} \in \mathbb{Z}\backslash \lbrace 0\rbrace, k\in \mathbb{N}.$\\ 
	
	Let $Z = \begin{pmatrix}
	A & B_{1} & C_{1}\\
	B_{2} & D & E_{1}\\
	C_{2} & E_{2} & F
	
	\end{pmatrix} \in SL_{3}(\mathbb{Z})$ be such that $A > |B_{1}| + |C_{1}| + 1 $. 
	\begin{claim}\label{mainclaim3}
		$Z\cdot N_{1}\cdot N_{2}\cdots N_{k}$ has the same form as $Z$.
	\end{claim}

	\begin{proof}[Proof of claim]
Let $Z'=Z\cdot N_i$ for some $i\in\mathbb N.$ First, we check that $Z'$ has the same form as $Z$.		
		$$Z' = \begin{pmatrix}
		A' & B'_{1} & C'_{1}\\
		B'_{2} & D' & E'_{1}\\
		C'_{2} & E'_{2} & F'
		\end{pmatrix} =  \begin{pmatrix}
		A & B_{1} & C_{1}\\
		B_{2} & D & E_{1}\\
		C_{2} & E_{2} & F
		
		\end{pmatrix} \times  \begin{pmatrix}
		a_{11_{}} & a_{12_{}} & a_{13_{}}\\
		a_{21_{}} & a_{22_{}} & a_{23_{}}\\
		a_{31_{}} & a_{32_{}} & a_{33_{}}\\
		\end{pmatrix}$$ 
		By the inequalities on the $ a_{rs_{}}, 1 \leqslant r,s\leqslant 3$, we know that 
		\begin{enumerate}
			\item $A' = Aa_{11_{}} + B_{1}a_{21_{}} + C_{1}a_{31_{}} > (x^{2}-\frac{x}{2})A - (\frac{2}{3}xa+\frac{x}{9a})|B_{1}| - \frac{a^{2}}{6}|C_{1}|$
			\item $B'_{1} = Aa_{12_{}}+ B_{1}a_{22_{}} + C_{1}a_{32_{}} < (\frac{2}{3}xa+\frac{a}{9})A + (\frac{4x}{9} + \frac{1}{9})|B_{1}| + \frac{b}{8}|C_{1}|$
			\item $C'_{1} = Aa_{13_{}} + B_{1}a_{23_{}} + C_{1}a_{33_{}} < \frac{a^{2}}{6}A + \frac{a}{8}|B_{1}| + \frac{1}{4}|C_{1}|$ 
		\end{enumerate}
		
		We would like to show that $A' > |B'_{1}| + |C'_{1}| + 1$ under the assumption that 
		$A > |B_{1}| + |C_{1}| + 1.$\\
		Substituting the above inequalities, we get that if we can show
		\begin{equation}\label{eqn}\begin{split}
		\left(x^{2}-\frac{x}{2}\right)A - \left(\frac{2}{3}xa+\frac{x}{9a}\right)|B_{1}| - \frac{a^{2}}{6}|C_{1}| > \left(\frac{2}{3}xa+\frac{a}{9}\right)A + 
		\left(\frac{4x}{9} + \frac{1}{9}\right)|B_{1}| \\ + \frac{b}{8}|C_{1}| + \frac{a^{2}}{6}A + \frac{a}{8}|B_{1}| \- + \frac{1}{4}|C_{1}| + 1,
		\end{split}
		\end{equation}
		then we are done. We know that $x=ab \geqslant 2a$. If $x > 2a$ (equivalently $b>2$), then rearranging and simplifying the above expression we see that if we can show  
		$$(x^{2} - 1 -xa)A > xa|B_{1}| + xa|C_{1}| + 1,$$ 
		then we are done. Indeed, the above holds under the assumption $A > |B_{1}| + |C_{1}| + 1$ (if $x\geqslant 2a+1$). 
		For $x =2a$, a direct substitution in (\ref{eqn}) shows that it holds under this assumption.
	
	The claim follows by induction on $k$, using the above assertion on $Z'$ twice. Indeed, the base of the induction is the above considerations for $Z\cdot N_1$. 
	Then if $Z\cdot N_1\cdot N_2 \cdots N_{i-1}$ has the same form as $Z$,
	this assertion gives that $Z\cdot N_1\cdot N_2\cdots N_i$ has the same form as $Z$ as well.
		
	\end{proof}
	
	Thus, $N= N_{1}\cdots N_{k} $ cannot be identity in $SL_3(\mathbb Z)$ for any $k\in \mathbb{N}$. Also, $N$ has the same form as $Z$. 
	Since $X^r$ is not of this form, it follows that $N \neq X^{r}$ for any $r\in \mathbb{Z}$. Therefore, products of the form $N\cdot X^{r}$ cannot be identity either. The case of $Y^{s}\cdot N \neq {\rm id}$ is clear from the fact that we consider reduced words in $X^{\pm 1}$ and $Y^{\pm 1}$. So, $Y^{s}\cdot N= {\rm id} \Longrightarrow NY^{s} = {\rm id}$ which either has the same form as above or reduces to a power of $X$ or of $Y$. Both of these elements are of infinite order so their powers cannot be identity. The remaining case of products of form $Y^s\cdot N\cdot X^r$ reduces to the already considered.
	This concludes the fact that $\langle X,Y\rangle \leqslant SL_{3}(\mathbb{Z})$ is a free subgroup.
\end{proof}

Our proof of Proposition \ref{mainpropsec3} actually gives the following stronger statement.
\begin{theorem}\label{mainthmsec3}
		Let $A,B\in SL_{3}(\mathbb{Z})$ be such that $$A = \begin{pmatrix}
	1 & a & 0\\
	0 & 1 & a\\
	0 & 0 & 1
	\end{pmatrix}, \,B = \begin{pmatrix}
	1 & 0 & 0\\
	b & 1 & 0\\
	0 & b & 1
	\end{pmatrix}, \,\, a,b\geqslant 2.$$
	
	Then $\langle A^{l},B^{l}\rangle, \,\, \forall l\geqslant 4$, is a free subgroup of $SL_{3}(\mathbb{Z})$.
\end{theorem}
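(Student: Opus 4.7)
The plan is to re-run the computation from the proof of Proposition~\ref{mainpropsec3} with a general $l \geqslant 4$ in place of $4$, and to observe that every estimate used there is monotone non-increasing in $l$. I would set $X = A^l$ and $Y = B^l$, so that
\[
X^{r_i} = \begin{pmatrix} 1 & lr_i a & \binom{lr_i}{2}a^2 \\ 0 & 1 & lr_i a \\ 0 & 0 & 1 \end{pmatrix}, \quad
Y^{s_i} = \begin{pmatrix} 1 & 0 & 0 \\ ls_i b & 1 & 0 \\ \binom{ls_i}{2}b^2 & ls_i b & 1 \end{pmatrix},
\]
for $r_i, s_i \in \mathbb{Z} \setminus \{0\}$. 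Multiplying and factoring out the scalar $\frac{l^2 r_i s_i (lr_i-1)(ls_i-1)}{4}$ yields a normalized matrix $N_i$ of exactly the same shape as the one in the proof of Proposition~\ref{mainpropsec3}.

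The second step is to check that the nine bounds (1)--(9) on the entries of $N_i$ carry over with $4$ replaced by $l$ in every denominator. The $(1,1)$ entry takes the form
\[
a_{11_i} = x^2 + \frac{4x}{(lr_i-1)(ls_i-1)} + \frac{4}{l^2 r_i s_i (lr_i-1)(ls_i-1)}, \quad x = ab,
\]
and its deviation from $x^2$ is dominated by $\frac{4x}{(l-1)^2}$, which is already $< x/2$ at $l=4$ and only shrinks for larger $l$. Each off-diagonal entry of $N_i$ carries at least one factor $\frac{1}{lr_i-1},\, \frac{1}{ls_i-1}$, or $\frac{1}{l^2 r_i s_i (lr_i-1)(ls_i-1)}$ in its denominator, so its absolute value is bounded by its $l=4$ analogue. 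Consequently, all nine inequalities (1)--(9) remain valid for every $l \geqslant 4$.

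With these bounds in hand, Claim~\ref{mainclaim3} and its inductive proof transfer verbatim: if $Z$ has top row $(A, B_1, C_1)$ with $A > |B_1| + |C_1| + 1$, then so does $Z \cdot N_i$. The decisive inequality
\[
(x^2 - 1 - xa)A > xa(|B_1| + |C_1|) + 1
\]
depends only on $x = ab \geqslant 4$ and is independent of $l$. Hence no nontrivial alternating product of $X^{\pm 1}$ and $Y^{\pm 1}$ can equal the identity in $SL_3(\mathbb{Z})$. The handling of words beginning with $Y^{s_0}$, ending with $X^{r_{k+1}}$, or of the shape $Y^s \cdot N \cdot X^r$ reduces to this case exactly as in the proof of Proposition~\ref{mainpropsec3}.

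The only real obstacle is the bookkeeping: one must confirm that, term by term, the rational functions in $(r_i, s_i, l)$ defining the bounds (1)--(9) indeed attain their suprema at $|r_i| = |s_i| = 1$ and $l = 4$. Each such verification amounts to a monotonicity check on a simple rational function and presents no serious difficulty, though care is needed with sign-sensitive quantities where $r_i, s_i$ may have opposite signs.
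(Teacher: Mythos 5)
Your proposal is correct and takes essentially the same route as the paper: the paper proves Proposition~\ref{mainpropsec3} for $l=4$ and then asserts Theorem~\ref{mainthmsec3} with the one-line remark that the same computation gives the stronger statement, which is precisely the monotonicity-in-$l$ observation (the normalized matrices $N_i$ and the nine entry bounds only tighten as $l$ grows, while the decisive inequality $(x^2-1-xa)A > xa(|B_1|+|C_1|)+1$ is $l$-independent) that you spell out.
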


\begin{theorem}\label{thmgirthsec3}
	Fix $a,b\geqslant 2$ and let $p$ be a prime. Then, there exists a constant $C=C(a,b)>0$, such that 
	$\gi Cay(\langle A^{4}_{p}, B^{4}_{p}\rangle, \{A^{4}_{p}, B^{4}_{p}\} )\geqslant C\log p$.
\end{theorem}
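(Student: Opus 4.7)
The plan is to closely mirror the two-dimensional argument of Proposition~\ref{mainpropsec2}, but in a slightly simplified form: instead of analyzing the explicit polynomial form of one particular matrix entry, I will use a uniform entry bound on all nine entries simultaneously. Since the Cayley graph is vertex-transitive, its girth equals the length of the shortest non-trivial reduced word $w$ in the alphabet $\{A_p^{\pm 4}, B_p^{\pm 4}\}$ representing the identity of $SL_3(\mathbb{F}_p)$. Lifting $w$ to the corresponding reduced word $\tilde{w}$ in $\{A^{\pm 4}, B^{\pm 4}\} \subset SL_3(\mathbb{Z})$, Proposition~\ref{mainpropsec3} implies that $\tilde{w} \neq \id$ in $SL_3(\mathbb{Z})$, because $\langle A^4, B^4 \rangle$ is free of rank two.

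The next step is a standard entry-growth estimate. Let $M = M(a,b)$ be a common bound on the absolute values of the entries of the four matrices $A^{\pm 4}, B^{\pm 4}$; inspecting their explicit form one may take $M \leqslant 10\max(a,b)^2$. A one-line induction on the length $\ell$ of $\tilde{w}$, using that each entry of a $3 \times 3$ matrix product is a sum of three products, yields that every entry of $\tilde{w}$ has absolute value at most $(3M)^\ell$. Consequently every entry of the non-zero integer matrix $\tilde{w} - \id$ has absolute value at most $(3M)^\ell + 1$. The congruence $\tilde{w} \equiv \id \pmod{p}$ now forces each entry of $\tilde{w} - \id$ to be a multiple of $p$, and since $\tilde{w} \neq \id$ at least one such entry is non-zero; combining with the previous bound gives $(3M)^\ell + 1 \geqslant p$, hence $\ell \geqslant \log(p-1)/\log(3M)$. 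Any constant $C < 1/\log(3M)$ then works, possibly after shrinking to absorb the finitely many small primes, and $C$ depends only on $a,b$ (not on $p$), which is the sense in which it is ``absolute''.

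The main (and essentially only) genuinely difficult step in this plan is the freeness of $\langle A^4, B^4 \rangle$ in $SL_3(\mathbb{Z})$, which is the matrix-analytic content of Claim~\ref{mainclaim3} established in the proof of Proposition~\ref{mainpropsec3}. Once that freeness is available, the girth bound is strictly parallel to, and arguably simpler than, the two-dimensional case: no fine combinatorial accounting of the $(1,1)$-entry is required, the dimension enters only via the factor $3$ in the matrix-multiplication bound, and the congruence-to-size principle $|n| \geqslant p$ for a non-zero integer $n$ divisible by $p$ does the rest.
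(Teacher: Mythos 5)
Your proof is correct and follows the same underlying strategy as the paper: freeness of $\langle A^4, B^4\rangle$ (Proposition~\ref{mainpropsec3}) forces a nontrivial reduced word to give a non-identity integer matrix, and the congruence $\tilde w\equiv\id\pmod p$ then forces some entry to have size at least $p$, which is incompatible with exponential entry growth unless the word length is $\gtrsim\log p$. Your implementation is actually a bit cleaner than the paper's: rather than tracking the explicit polynomial form of the $(1,1)$-entry via the normalized matrices $N_i$ (and then reconciling the number of blocks $k$ in the $X^{r_i}Y^{s_i}$ decomposition with the actual word length $\sum_i(|r_i|+|s_i|)$), you bound all nine entries uniformly by $(3M)^\ell$ through a direct submultiplicativity induction on the word length $\ell$, which avoids that bookkeeping entirely and transfers verbatim to dimensions $n\geqslant 4$.
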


\begin{proof}
		Let again $X = A^4 = \begin{pmatrix}
	1 & 4a & 6a^{2} \\
	0 & 1 & 4a \\
	0 & 0 & 1\\
	\end{pmatrix}$ and $Y = B^{4} = \begin{pmatrix}
	1 & 0 & 0 \\
	4b & 1 & 0 \\
	6b^{2} & 4b & 1\\
	\end{pmatrix}$. \\
	Using the previous expression for products  
	$$X^{r_{i}}Y^{s_{i}} = \begin{pmatrix}
	1 & 4ar_{i} & \frac{4r_{i}a^{2}(4r_{i}-1)}{2}\\
	0 & 1 & 4ar_{i}\\
	0 & 0 & 1\\ 
	\end{pmatrix} \times \begin{pmatrix}
	1 & 0 & 0\\
	4bs_{i} & 1 & 0\\
	\frac{4s_{i}b^{2}(4s_{i}-1)}{2} & 4bs_{i} & 1 \\ 
	\end{pmatrix} $$
	
	$$= 4abr_{i}s_{i} \begin{pmatrix}
	ab(4r_{i}-1)(4s_{i} - 1) + 4 + \frac{1}{4abr_{i}s_{i}} & 2a(4r_{i}-1) + \frac{1}{bs_{i}} & \frac{a(4r_{i}-1)}{2bs_{i}}\\
	2b(4s_{i}-1) + \frac{1}{ar_{i}} & 4 + \frac{1}{4abr_{i}s_{i}} & \frac{1}{bs_{i}}\\
	\frac{b(4s_{i}-1)}{2ar_{i}} & \frac{1}{ar_{i}} & \frac{1}{4abr_{i}s_{i}}
	\end{pmatrix} =
	4r_{i}s_{i}(4r_{i}-1)(4s_{i} - 1)\times $$
	$$\begin{pmatrix}
	x^{2} + \frac{4x}{(4r_{i}-1)(4s_{i} - 1)} + \frac{1}{4r_{i}s_{i}(4r_{i}-1)(4s_{i} - 1)} & \frac{2xa}{(4s_{i} - 1)} + \frac{a}{s_{i}(4r_{i}-1)(4s_{i} - 1)} & \frac{a^{2}}{2s_{i}(4s_{i} - 1)}\\
	
	\frac{2xb}{(4r_{i}-1)}+ \frac{b}{r_{i}(4r_{i}-1)(4s_{i} - 1)} & \frac{4x}{(4r_{i}-1)(4s_{i} - 1)}+ \frac{1}{4r_{i}s_{i}(4r_{i}-1)(4s_{i} - 1)} & \frac{a}{s_{i}(4r_{i}-1)(4s_{i} - 1)}\\
	
	\frac{b^{2}}{2r_{i}(4r_{i}-1)} & \frac{b}{r_{i}(4r_{i}-1)(4s_{i} - 1)} & \frac{1}{4r_{i}s_{i}(4r_{i}-1)(4s_{i} - 1)}
	\end{pmatrix}$$
	$$ = 4r_{i}s_{i}(4r_{i}-1)(4s_{i} - 1)N_{i},$$ where 
	$x=ab\geqslant 4$ and $ \forall i\in \mathbb{N},$
	$$N_{i} = \begin{pmatrix}
	x^{2} + \frac{4x}{(4r_{i}-1)(4s_{i} - 1)} + \frac{1}{4r_{i}s_{i}(4r_{i}-1)(4s_{i} - 1)} & \frac{2xa}{(4s_{i} - 1)} + \frac{a}{s_{i}(4r_{i}-1)(4s_{i} - 1)} & \frac{a^{2}}{2s_{i}(4s_{i} - 1)}\\
	
	\frac{2xb}{(4r_{i}-1)}+ \frac{b}{r_{i}(4r_{i}-1)(4s_{i} - 1)} & \frac{4x}{(4r_{i}-1)(4s_{i} - 1)}+ \frac{1}{4r_{i}s_{i}(4r_{i}-1)(4s_{i} - 1)} & \frac{a}{s_{i}(4r_{i}-1)(4s_{i} - 1)}\\
	
	\frac{b^{2}}{2r_{i}(4r_{i}-1)} & \frac{b}{r_{i}(4r_{i}-1)(4s_{i} - 1)} & \frac{1}{4r_{i}s_{i}(4r_{i}-1)(4s_{i} - 1)}
	\end{pmatrix}.$$
		
In general, $$\displaystyle\prod_{1\leqslant i\leqslant k}X^{r_{i}}Y^{s_{i}} = 4^{k}\prod_{1\leqslant i \leqslant k}r_{i}s_{i}(4r_{i}-1)(4s_{i}-1)N_{i} = 4^{k}\prod_{1\leqslant i \leqslant k}r_{i}s_{i}(4r_{i}-1)(4s_{i}-1)\cdot N,$$ with $N=\displaystyle\prod_{1\leqslant i\leqslant k}N_{i}.$
	 Denoting $N = \begin{pmatrix}
	N_{11} & N_{12} & N_{13}\\
	N_{21} & N_{22} & N_{23}\\
	N_{31} & N_{32} & N_{33}\\
	\end{pmatrix}$, it is clear that $N_{11} $ is $O(x^{2k})$, and hence  reduction modulo $p$ gives us that $k$ should be at least $C\log p$ for some $C>0$. 
	Again, we can assume here that $a,b\not\equiv 0 (\mod p)$ and $a,b\not\equiv 1 (\mod p)$: the finitely many excluded primes $p$ are taken into account
	by enlarging constant $C$ if necessary. 
	Thus, the girth of the graph is at least $C\log p$ for some $C>0$.
\end{proof}

\subsection{Diameter} Fix $a\equiv 1 (\mod 3)$ and $b\equiv -1 (\mod  3)$. We shall show that $\langle A^{4}_p,B^{4}_p\rangle=SL_{3}(\mathbb{F}_{p})$ for all sufficiently large primes $p$.
For this, we use a result on the Zariski density and the following proposition.

\begin{proposition}\label{mainprop3}
	Let $a\equiv 1 (\mod 3)$ and $b\equiv -1 (\mod 3)$. Then $\mod 3$ reduction of the matrices $X = A^4 = \begin{pmatrix}
	1 & 4a & 6a^{2} \\
	0 & 1 & 4a \\
	0 & 0 & 1\\
	\end{pmatrix}$ and $Y = B^{4} = \begin{pmatrix}
	1 & 0 & 0 \\
	4b & 1 & 0 \\
	6b^{2} & 4b & 1\\
	\end{pmatrix}$ generate $SL_{3}(\mathbb{F}_{3})$.
\end{proposition}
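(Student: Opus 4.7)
The plan is to verify the proposition by direct computation inside the finite group $SL_3(\mathbb F_3)$. Using $4\equiv 1$ and $6\equiv 0\pmod 3$ together with the hypotheses $a\equiv 1$ and $b\equiv -1\pmod 3$, the matrices $X=A^4$ and $Y=B^4$ reduce modulo $3$ to
\[
X_3=\begin{pmatrix}1 & 1 & 0\\ 0 & 1 & 1\\ 0 & 0 & 1\end{pmatrix},\qquad
Y_3=\begin{pmatrix}1 & 0 & 0\\ -1 & 1 & 0\\ 0 & -1 & 1\end{pmatrix}\in SL_3(\mathbb F_3),
\]
both unipotent of order $3$, so in particular $X_3^{-1}=X_3^2$ and $Y_3^{-1}=Y_3^2$.

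To prove $H:=\langle X_3,Y_3\rangle=SL_3(\mathbb F_3)$, I would begin by showing that $H$ acts irreducibly on $\mathbb F_3^3$. Since $X_3$ and $Y_3$ are unipotent, their invariant subspaces coincide with their fixed subspaces; a direct check gives that the unique $X_3$-invariant line is $\langle e_1\rangle$ and the unique $X_3$-invariant hyperplane is $\{z=0\}$, while the unique $Y_3$-invariant line is $\langle e_3\rangle$ and the unique $Y_3$-invariant hyperplane is $\{x=0\}$. Hence $H$ stabilises neither a common line nor a common hyperplane, and so $H$ lies in no (reducible) parabolic maximal subgroup of $SL_3(\mathbb F_3)$. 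Apart from the parabolics, the maximal subgroups of the simple group $PSL_3(3)=SL_3(\mathbb F_3)$ (of order $5616$) are of comparatively small order and are known explicitly from the ATLAS. I would rule each of them out by computing, directly from the characteristic polynomials over $\mathbb F_3$, the orders of a few short words in $X_3$ and $Y_3$ -- such as $X_3Y_3$, $X_3Y_3^{-1}$, and the commutator $[X_3,Y_3]$ -- and exhibiting an element of $H$ whose order is too large to be admissible in any of those remaining maximal subgroups; this forces $H=SL_3(\mathbb F_3)$.

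An equivalent and perhaps more transparent route bypasses the subgroup classification: one exhibits each of the six elementary transvections $I+E_{ij}$ ($i\ne j$) as an explicit short word in $X_3^{\pm 1}$ and $Y_3^{\pm 1}$, using that $X_3=(I+E_{12})(I+E_{23})$ and $Y_3=(I-E_{21})(I-E_{32})$ are each products of two commuting transvections and that suitable commutators and conjugates in $X_3,Y_3$ isolate the individual factors. Since the transvections $I+E_{ij}$ generate $SL_3(\mathbb F_p)$ over any prime field, this also concludes. In either approach, the main obstacle is purely bookkeeping inside a finite group of $5616$ elements: a combinatorial search for short words, respectively an enumeration against the maximal-subgroup list. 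Both reduce to a short, bounded mechanical verification.
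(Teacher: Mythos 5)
Your reduction modulo $3$ is correct, and both routes you sketch are viable. Your second alternative — exhibit all elementary transvections $I+E_{ij}$ as explicit words in $X_3^{\pm1},Y_3^{\pm1}$ — is exactly what the paper does: it computes $C_1=YXY^{-1}X^{-1}$, $C_2=Y^{-1}X^{-1}YX$, and then short products and commutators of these until all $E_{i,j}$ appear, after which the standard fact that transvections generate $SL_n$ finishes the proof. Your first route, via irreducibility plus the maximal-subgroup list of $L_3(3)$, is genuinely different in spirit: it trades the paper's hands-on matrix bookkeeping for an appeal to the ATLAS. That route does work — your observation that the $X_3$-stable line $\langle e_1\rangle$ and hyperplane $\{z=0\}$ are disjoint from the $Y_3$-stable line $\langle e_3\rangle$ and hyperplane $\{x=0\}$ correctly kills both parabolics; and one checks that $X_3Y_3$ has irreducible characteristic polynomial over $\mathbb F_3$, hence order $13$, which rules out $S_4$, while $X_3Y_3^{-1}$ has characteristic polynomial $(\lambda-1)(\lambda+1)^2$ with a nontrivial Jordan block at $-1$, hence order $6$, which rules out $13{:}3$. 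The trade-off is that your first route imports the classification of maximal subgroups of a simple group, whereas the paper's argument is entirely elementary and self-contained, which matters in a paper emphasizing explicitness and generalizing to $SL_n(\mathbb F_q)$ where an ATLAS lookup is unavailable.

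The only real shortcoming is that you stop at the level of a plan. Neither route is carried out: you do not exhibit the words producing the transvections, and you do not compute the element orders needed to eliminate $13{:}3$ and $S_4$. You are right that both are bounded finite computations, but for a proof (rather than a proof sketch) at least one of them must actually be done, as the paper does with its explicit chain of matrices $C_1,\dots,C_4,T_1,T_2,T,Z$.
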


\begin{proof}
	After reducing $\mod 3,$ we have to show that $$X \equiv  \begin{pmatrix}
		1 & 1 & 0 \\
		0 & 1 & 1 \\
		0 & 0 & 1\\
	\end{pmatrix} (\mod 3) \hbox{ and } Y \equiv \begin{pmatrix}
		1 & 0 & 0 \\
		-1 & 1 & 0 \\
		0 & -1 & 1\\
	\end{pmatrix}(\mod 3)$$ generate $SL_{3}(\mathbb{F}_{3})$. We give an algorithm how to attain elementary matrices of $SL_{3}(\mathbb{F}_{3})$ 
	while taking certain products of our matrices $X^{\pm 1}$ and $Y^{\pm 1}$. This
	goes as follows: 
	\begin{enumerate}
		\item Calculate $C_{1} =  YXY^{-1}X^{-1} \equiv 
		\begin{pmatrix}
		2 & -1 & 1\\
		0 & 1 & 0 \\
		-1 & 0 & 0
		\end{pmatrix}(\mod 3).$
		Similarly,	
			
		$C_{2} = Y^{-1}X^{-1}YX  \equiv \begin{pmatrix}
		2 & 0 & -1\\
		1 & 1 & 0\\
		1 & 0 & 0
		\end{pmatrix} (\mod 3),\quad 
		C_{1}^{-1}=C_{3} \equiv \begin{pmatrix}
		0 & 0 & -1\\
		0 & 1 & 0\\
		1 & 1 & 2
		\end{pmatrix} (\mod 3),$

		$C_{2}^{-1}=C_{4} \equiv \begin{pmatrix}
		0 & 0 & 1\\
		0 & 1 & -1\\
		-1 & 0 & 2
		\end{pmatrix} (\mod 3).$

		\item This implies $C_{1}C_{2}^{-1} = YXY^{-1}X^{-2}Y^{-1}XY \equiv \begin{pmatrix}
		-1 & -1 & 5\\
		0 & 1 & -1 \\
		0 & 0 & -1
		\end{pmatrix} (\mod 3) \Longrightarrow$\\ $C_{1}C_{2}^{-1}X\equiv  \begin{pmatrix}
		-1 & -2 & 4\\
		0 & 1 & 0 \\
		0 & 0 & -1
		\end{pmatrix} (\mod 3)  $ and $(C_{1}C_{2}^{-1}X)^{2} \equiv \begin{pmatrix}
		1 & 0 & 1\\
		0 & 1 & 0 \\
		0 & 0 & 1
		\end{pmatrix} (\mod 3)$.\\ Thus, we can get the matrices $ T_{1}\equiv\begin{pmatrix}
		1 & 0 & 1\\
		0 & 1 & 0 \\
		0 & 0 & 1
		\end{pmatrix} (\mod 3)$ and $ T_{2} \equiv \begin{pmatrix}
		1 & 0 & 0\\
		0 & 1 & 0 \\
		1 & 0 & 1
		\end{pmatrix}(\mod 3).$ 
		
		\item Let $T = [T_{2},T_{1}]\,\footnote{For elements $g,h$ in a group  $G$, the commutator $ [g,h]$ is equal to $g^{-1}h^{-1}gh$. }\equiv  \begin{pmatrix}
		0 & 0 & -1\\
		0 & 1 & 0 \\
		1 & 0 & 0
		\end{pmatrix} (\mod 3)$ and $Z = T\times C_{4} \equiv  \begin{pmatrix}
		1 & 0 & 1\\
		0 & 1 & -1 \\
		0 & 0 & 1
		\end{pmatrix} (\mod 3).$  Then $Z\times T_{1}^{-1}$ gives us $$ \begin{pmatrix}
		1 & 0 & 0\\
		0 & 1 & -1 \\
		0 & 0 & 1
		\end{pmatrix} (\mod 3).$$ 
\end{enumerate}		
Similarly, we get the other elementary matrices. It is standard that the elementary matrices generate $SL_{n}(\mathbb{Z}) $ for all $n\geqslant 1$. Thus, $\mod 3$ reduction of $A^4$ and $B^4$ indeed generate $SL_{3}(\mathbb{F}_{3})$. \end{proof}

\begin{remark}
	Proposition~\ref{mainprop3} sheds light on our choice of the power of $A$ and $B$ to be $4$. The number $3$ in $a\equiv 1 (\mod 3)$ and $b\equiv -1 (\mod 3)$ is not mandatory, any prime $q\geqslant 3$ works. Indeed, if we fix  a prime $q$, $a\equiv 1 (\mod q)$ and $b\equiv -1 (\mod q)$, and take the power of $A$ and $B$ to be $q+1$, then
our algorithm from the preceding proof extends, and we get the elementary matrices in $SL_{3}(\mathbb{F}_{q})$.
\end{remark}

We now state an essential criterion that ensures that $\mod p$ reduction of the matrices $X$ and $Y$ generate $SL_{3}(\mathbb{F}_{p})$ for almost all primes $p$.
We use a formulation from \cite{myfav85} (as we mentioned in Section~\ref{sec:strategy}, this result follows using~\cite{W91} and~\cite{MR735226}). 
We keep the original notation, e.g., here $A$ denotes a subset. 

\begin{proposition}[Lubotzky~\cite{myfav85}]\label{lubprop}
Let $A = \lbrace a_{i}\rbrace _{i\in I}$ be a subset of $SL_{n}(\mathbb{Z})$. Assume that for some prime $p$, the reduction modulo $p$ of $A$ generates the subgroup $SL_{n}(\mathbb{F}_{p})$. If $n=2$ assume $p\neq 2 \text{ or } 3$, if $n = 3 \text{ or } 4,$ assume $p\neq 2$. Then for almost every prime $q$, reduction modulo $q$ of $A$ generates the subgroup $SL_{n}(\mathbb{F}_{q})$.

\end{proposition}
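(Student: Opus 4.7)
The plan is to factor the proof through two classical inputs: Weisfeiler's result on Zariski density and the Matthews--Vaserstein--Weisfeiler strong approximation theorem. Set $G=\langle A\rangle\leqslant SL_n(\mathbb Z)$. The first step will be to prove that $G$ is Zariski dense in $SL_n$, regarded as an algebraic group over $\mathbb Q$; once this is achieved, strong approximation delivers the conclusion essentially for free.

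For Zariski density, I would let $H\leqslant SL_{n,\mathbb Q}$ denote the Zariski closure of $G$ and fix a flat $\mathbb Z$-model which has good reduction outside a finite bad set. For every prime $r$ of good reduction, the reduction-mod-$r$ map carries $G$ into $H_r(\mathbb F_r)\subseteq SL_n(\mathbb F_r)$. At the distinguished prime $p$ from the hypothesis the reduction of $G$ already fills all of $SL_n(\mathbb F_p)$, so assuming $p$ is of good reduction one gets $H_p(\mathbb F_p)=SL_n(\mathbb F_p)$. A Lang--Weil-type estimate $|H_p(\mathbb F_p)|=O(p^{\dim H})$ combined with $|SL_n(\mathbb F_p)|\asymp p^{n^2-1}$ then forces $\dim H=n^2-1$, and since $SL_n$ is connected this gives $H=SL_n$. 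For the small exceptional primes allowed in the statement one has to work harder, as Lang--Weil is not directly applicable and $p$ may have bad reduction; here I would invoke Weisfeiler's density theorem~\cite{W91}, which is tailored precisely to such small residue characteristics and yields Zariski density under exactly the characteristic constraints (``$p\neq 2,3$'' for $n=2$, ``$p\neq 2$'' for $n=3,4$) listed in the proposition.

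The second step is then immediate: given Zariski density of $G$, the Matthews--Vaserstein--Weisfeiler strong approximation theorem~\cite{MR735226} asserts that any finitely generated Zariski dense subgroup of $SL_n(\mathbb Z)$ surjects modulo $q$ onto $SL_n(\mathbb F_q)$ for all but finitely many primes $q$, which is precisely the ``for almost every prime $q$'' conclusion. The hard part is clearly the Zariski density step: had the hypothesis been restricted to primes $p$ beyond an $n$-dependent threshold the Lang--Weil argument alone would suffice, but the proposition allows the small exceptional primes, and that is exactly where Weisfeiler's more delicate characteristic-$p$ analysis is indispensable. Strong approximation is then applied as a black box to conclude.
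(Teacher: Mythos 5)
Your two-step plan --- first establish that $\langle A\rangle$ is Zariski dense, then invoke the Matthews--Vaserstein--Weisfeiler strong approximation theorem --- is exactly the route the paper indicates: the proposition is quoted from Lubotzky without a reproduced proof, and the authors remark (Section~\ref{sec:strategy}, and again just before stating the proposition) that it follows from~\cite{W91} together with~\cite{MR735226}, so your outline matches theirs. The one point worth correcting is the Lang--Weil detour. The estimate $|H_p(\mathbb{F}_p)| = O(p^{\dim H})$ carries an implied constant depending on $H$ (its degree and its number of geometric components), and since the hypothesis supplies only one fixed prime $p$, you cannot deduce $\dim H = n^2-1$ from this bound alone unless you also control that constant uniformly over all possible Zariski closures $H$. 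That uniform control is not automatic; it is essentially what Weisfeiler's (and Nori's) characteristic-$p$ analysis delivers, and the excluded primes in the statement are precisely those where it fails. So the proposed division of labor --- Lang--Weil handles ``large'' $p$, Weisfeiler handles ``small'' $p$ --- is misleading: the density step rests on the Weisfeiler/Nori input for every allowed prime. Since you do ultimately invoke~\cite{W91}, your argument reaches the right conclusion, but the intermediate claim about Lang--Weil sufficing beyond an $n$-dependent threshold should be dropped.
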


This amazing result gives the existence of a constant $q(n,A,p)$ such that given $n, A,$ and $p$ as in the preceding proposition,
$A$ generates $SL_n(\mathbb{F}_{q})$ for every prime $q\geqslant q(n, A,p)$. The proof in \cite{myfav85} uses the Strong Approximation theorem for linear groups, and
it is not constructive. In particular, it does not provide estimates on the possible value of $q(n, A,p)$. 
However, recent effective variants of the Strong Approximation theorem, see~\cite[Theorem 2.3]{EB2015} and
 \cite[Appendix A]{GoVa2012}, both based on Nori's quantitative proof of the strong approximation~\cite{Nori},
imply that the value of $q(n,A,p)$ is effective, i.e., it can be computed from the given parameters by an algorithm.

\begin{proposition} 
Let $a$ and $b$ be fixed so that  $a\equiv 1 (\mod  3)$  and $b\equiv -1 (\mod 3)$. Then $\langle A^{4}_{p}, B^{4}_{p}\rangle =SL_{3}(\mathbb{F}_{p})$ for almost every prime $p$, i.e., for every prime $p\geqslant q(3,\{X,Y\}, 3)$.
\end{proposition}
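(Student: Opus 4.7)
The plan is to deduce the statement by feeding Proposition~\ref{mainprop3} directly into the Lubotzky criterion (Proposition~\ref{lubprop}). Under the standing hypotheses $a\equiv 1\pmod{3}$ and $b\equiv -1\pmod{3}$, Proposition~\ref{mainprop3} already supplies an explicit seed prime: the reductions of $X=A^{4}$ and $Y=B^{4}$ modulo $3$ generate $SL_{3}(\mathbb{F}_{3})$. This is exactly the input required by Proposition~\ref{lubprop} with $n=3$, $A=\{X,Y\}$, and $p=3$; the only side condition for $n=3$ is $p\neq 2$, which is obviously satisfied.

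Applying Proposition~\ref{lubprop} then yields at once that for almost every prime $q$, the reduction modulo $q$ of $\{X,Y\}$ generates $SL_{3}(\mathbb{F}_{q})$. Since $X_{q}=A_{q}^{4}$ and $Y_{q}=B_{q}^{4}$, this is precisely the claim $\langle A_{q}^{4},B_{q}^{4}\rangle=SL_{3}(\mathbb{F}_{q})$ for almost every prime $q$, as required.

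To make the exceptional set into an explicit threshold $q(3,\{X,Y\},3)$, I would invoke the effective variants of the Strong Approximation theorem, namely \cite[Theorem~2.3]{EB2015} and \cite[Appendix~A]{GoVa2012}, both built on Nori's quantitative proof \cite{Nori}. These make the implicit constant in the Lubotzky criterion computable from $n$, the generating set $A$, and the seed prime $p$, so that $q(3,\{X,Y\},3)$ is effective rather than merely existential; the proposition can then be stated with a definite (if large) bound.

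There is essentially no obstacle in the deduction itself: the genuine work has already been carried out in Proposition~\ref{mainprop3}, where elementary matrices of $SL_{3}(\mathbb{F}_{3})$ are extracted from explicit words in $X^{\pm 1}, Y^{\pm 1}$. The remaining step is a verification of hypotheses and a citation of existing machinery. The only point requiring care is that the Lubotzky criterion is phrased for \emph{almost every} prime with no explicit bound, so one must additionally appeal to the effective Strong Approximation results in order to legitimately speak of a concrete threshold $q(3,\{X,Y\},3)$.
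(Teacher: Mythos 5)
Your proposal is correct and follows exactly the same route as the paper: the paper's proof is the one-line combination of Proposition~\ref{mainprop3} (generation mod~$3$) with Proposition~\ref{lubprop} (Lubotzky's one-for-almost-all criterion), and the remark about effective bounds via Breuillard and Golsefidy--Varj\'u matches the paper's surrounding discussion. Your extra check that the side condition $p\neq 2$ for $n=3$ is satisfied is a sensible sanity check but adds nothing beyond what the paper implicitly relies on.
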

\begin{proof}
	Use Proposition \ref{mainprop3} and Proposition \ref{lubprop} to get the existence of $q(3,\{X,Y\}, 3)$.
\end{proof}
Thus, we have that $\lbrace A^{4}, B^{4}\rbrace$ generate a free subgroup in $SL_{3}(\mathbb{Z})$ and also that $\{A_p^4, B_p^4\}$ generate  $SL_{3}(\mathbb{F}_{p})$ for almost all primes $p$. We can now estimate the diameter using a result similar to Proposition~\ref{keyprop2} but for higher dimensions. It was first shown by Helfgott for dimension $3$ and later generalised to all bounded dimensions by Pyber--Szabo~\cite{myfav31} and Breuillard--Green--Tao~\cite{myfav21}. We state it as formulated in \cite{myfav21}.

\begin{proposition}[Breuillard--Green--Tao~\cite{myfav21}, Corollary 2.4]\label{propBrGrTa}
	Let $d \in \mathbb{N}$. Then there are $\epsilon(d) > 0, C_{d} > 0$ such that for
	every absolutely almost simple algebraic group $G$ with $\dim(G)\leqslant d$ defined
	over a finite field $k$, and every finite subset $A$ in $G(k)$ generating $G(k)$, and
	for all $0 < \epsilon < \epsilon(d)$, one of the following two statements holds:
\begin{enumerate}
	\item $|A| >_{d}|G(k)|^{1-C_{d}\epsilon},$ 
	\item $|A^{3}|\geqslant |A|^{1+\epsilon},$\label{eq:BGT2}
\end{enumerate}
	where $X>_{d}Y $ denotes $X> C(d)Y$ and $C(d)$ is some constant depending only on $d$.
\end{proposition}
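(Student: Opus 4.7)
The plan is to prove this product theorem in bounded-dimension finite simple groups of Lie type by the Larsen--Pink / escape-from-subvarieties method, the same overall template used by Helfgott for $SL_{2}$ and $SL_{3}$. The fundamental tool is a ``dimension'' function $\delta(A) := \log|A|/\log|k|$, together with the Larsen--Pink estimate that any proper subvariety $V \subset G$ defined over $\overline{k}$ with complexity bounded by $d$ satisfies $|V(k)| \leqslant C_{d}\,|k|^{\dim V}$. Thus $\delta$ behaves roughly additively on products, up to bounded additive constants, and one aims to show that growth can fail only when $A$ already nearly exhausts $G(k)$.

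The first step is to set up an \emph{escape from subvarieties} principle: since $A$ generates $G(k)$ and $G$ is connected, for any proper subvariety $V \subsetneq G$ of bounded complexity there exists an integer $m = m(d)$ with $A^{m} \not\subset V$. Iterating this observation produces, inside some $A^{O_{d}(1)}$, a regular semisimple element $g$ whose centraliser in $G$ is a maximal torus $T$.

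The core of the argument is then a commutator-and-conjugate dichotomy. The conjugacy class of $g$ has dimension $\dim G - \dim T$, so by varying $a \in A$ and looking at $a g a^{-1}$ one controls fibres through Larsen--Pink. Either the conjugation map produces enough distinct elements in $A^{3}$ so that $\delta(A^{3}) \geqslant \delta(A)(1 + \epsilon)$, giving conclusion~(\ref{eq:BGT2}); or else $A$ concentrates on a coset of a proper algebraic subgroup, in which case the assumption that $A$ generates $G(k)$ forces $G(k)$ to be covered by boundedly many translates of $A$ and hence $|A| \gg_{d} |G(k)|^{1 - C_{d}\epsilon}$, i.e.\ conclusion (1). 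Connecting the two alternatives requires the standard Plünnecke--Ruzsa calculus to convert control on commutators into control on triple products.

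The hardest ingredient, and the one I would not attempt to reprove from scratch, is the final quantitative sum-product step inside a maximal torus. The $k$-points $T(k)$ are, up to bounded index, a product of multiplicative groups of finite extensions of $k$, and extracting an explicit $\epsilon > 0$ from the above dichotomy ultimately relies on a Bourgain--Katz--Tao type sum-product theorem applied there. Uniformity in the dimension $d$ is the technical heart of \cite{myfav21, myfav31}, and for this last step I would invoke their machinery directly rather than attempt a fresh proof.
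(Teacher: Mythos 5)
This proposition is quoted verbatim from Breuillard--Green--Tao (Corollary~2.4 of \cite{myfav21}) and is used in the paper as a black box; there is no in-paper proof to compare against, so your sketch is being measured against the cited source, not against the paper.

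As an outline of the external proof, most of your sketch is on target (Larsen--Pink nonconcentration, escape from subvarieties, producing a regular semisimple element with maximal-torus centraliser, a growth-versus-concentration dichotomy via conjugation), but the final paragraph contains a substantive misattribution. The argument in \cite{myfav21} --- and, independently, in Pyber--Szab\'o \cite{myfav31} --- does \emph{not} bottom out in a Bourgain--Katz--Tao sum-product theorem inside a maximal torus. Eliminating the sum-product input is precisely what makes these product theorems uniform in the dimension $d$ and distinguishes them from Helfgott's original $SL_2$ (and $SL_3$) proofs, where sum-product was essential. The quantitative finish in \cite{myfav21} instead comes from iterating the Larsen--Pink inequality on conjugacy classes and centraliser varieties together with a closure (``pivoting'') argument: if $A$ fails to grow, one shows that the approximate group generated by $A$ is trapped in a proper algebraic subgroup of bounded complexity, which contradicts the generation hypothesis unless $|A|$ is already a definite power of $|G(k)|$, yielding alternative~(1). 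If you intend to give a genuine proof rather than a gloss, you would need to replace the ``sum-product inside $T(k)$'' step with that Larsen--Pink closure argument; as written, the last step would reintroduce a dependence on the root system that the theorem specifically avoids.
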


Since our set $ \lbrace A^{4}, B^{4}\rbrace$ generates a free subgroup in $SL_{3}(\mathbb{Z})$ and the girth of $Cay(SL_{3}(\mathbb{F}_{p}),\lbrace A_{p}^4, B_{p}^4\rbrace)$ is at least $C\log p$ we argue like in the proof of Lemma~\ref{mainlem2} and use at most a constant times Proposition~\ref{propBrGrTa}(\ref{eq:BGT2}) to get a set $S' = S^{O(\log p)}$ with $|S'|\geqslant |G|^{1-\delta}$ for some constant $\delta$. Then applying the following result of Gowers to the subset $S'$ (with the minimal degree of the non-trivial representation as chosen in \cite[Theorem 7.1]{myfav21}), we conclude that $(S')^{3} = SL_{3}(\mathbb{F}_{p})$, and, hence, $$\di Cay(SL_{3}(\mathbb{F}_{p}),\lbrace A^{4}_p, B^{4}_p\rbrace) \hbox{ is } O(\log p),$$
where the implied constant depends on $a$ and $b$.

\begin{proposition}[Gowers \cite{myfav86}, Lemma 5.1;  cf. Nikolov--Pyber~\cite{NiPy2011}, Corollary 1]\label{propGow}
 Let $G$ be a group of order $n$, such that the minimal degree
of a nontrivial representation is $k$. If $A, B, C$ are three subsets of $G$ such
that $|A| |B| |C| >\frac{n^{3}}{k}$, then there is a triple $(a, b, c) \in A \times B \times C$ such that
$ab = c$.
\end{proposition}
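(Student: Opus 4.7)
The plan is to use non-abelian Fourier analysis on $G$, exploiting the hypothesis that every nontrivial irreducible representation of $G$ has dimension at least $k$. Let $N(A,B,C)$ denote the number of triples $(a,b,c)\in A\times B\times C$ with $ab=c$, so that
\[
N(A,B,C)\;=\;\sum_{g\in G}(\mathbf{1}_A\ast\mathbf{1}_B)(g)\,\mathbf{1}_C(g).
\]
The goal is to show $N(A,B,C)>0$ whenever $|A|\,|B|\,|C|>|G|^3/k$.

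I would first apply the Peter--Weyl decomposition, setting $\hat f(\rho)=\sum_{g\in G}f(g)\rho(g)$ for each irreducible representation $\rho$, to expand
\[
N(A,B,C)\;=\;\frac{1}{|G|}\sum_{\rho}d_\rho\,\operatorname{tr}\!\bigl(\hat{\mathbf{1}_A}(\rho)\,\hat{\mathbf{1}_B}(\rho)\,\hat{\mathbf{1}_C}(\rho)^{\ast}\bigr).
\]
The trivial representation contributes the main term $|A|\,|B|\,|C|/|G|$. For the nontrivial contributions, the trace inequality $|\operatorname{tr}(XYZ^{\ast})|\leqslant\|X\|_{\mathrm{op}}\|Y\|_{HS}\|Z\|_{HS}$ combined with Cauchy--Schwarz in $\rho$ and Plancherel's identity $\sum_{\rho}d_\rho\|\hat f(\rho)\|_{HS}^{2}=|G|\,\|f\|_{2}^{2}$ would give
\[
\bigl|\text{error}\bigr|\;\leqslant\;\Bigl(\max_{\rho\neq 1}\|\hat{\mathbf{1}_A}(\rho)\|_{\mathrm{op}}\Bigr)\sqrt{|B|\,|C|}.
\]

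The dimension hypothesis enters at exactly one spot: since $d_\rho\geqslant k$ for every nontrivial $\rho$, Plancherel applied to $\mathbf{1}_A$ yields $\|\hat{\mathbf{1}_A}(\rho)\|_{\mathrm{op}}^{2}\leqslant\|\hat{\mathbf{1}_A}(\rho)\|_{HS}^{2}\leqslant |G|\,|A|/d_\rho\leqslant |G|\,|A|/k$. Thus the error is at most $\sqrt{|G|\,|A|\,|B|\,|C|/k}$, which is strictly less than the main term $|A|\,|B|\,|C|/|G|$ precisely when $|A|\,|B|\,|C|>|G|^{3}/k$. Under this hypothesis $N(A,B,C)>0$, yielding the desired triple. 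The main obstacle is keeping track of normalization conventions in non-abelian Fourier analysis so that the crucial bound $\|\hat{\mathbf{1}_A}(\rho)\|_{\mathrm{op}}\leqslant\sqrt{|G|\,|A|/k}$ appears cleanly; once this is in hand, the rest of the argument is Cauchy--Schwarz plus Plancherel.
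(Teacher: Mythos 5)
The paper does not prove this proposition; it cites it directly from Gowers~\cite{myfav86} (Lemma 5.1) and Nikolov--Pyber~\cite{NiPy2011}, so there is no internal proof to compare against. Your argument is correct as written: the main term from the trivial representation is $|A||B||C|/|G|$, the Schatten--H\"older bound $|\operatorname{tr}(XYZ^{*})|\leqslant\|X\|_{\mathrm{op}}\|Y\|_{HS}\|Z\|_{HS}$ together with Cauchy--Schwarz over $\rho$ and Plancherel gives the error bound $\bigl(\max_{\rho\neq 1}\|\hat{\mathbf{1}_A}(\rho)\|_{\mathrm{op}}\bigr)\sqrt{|B||C|}$, and the single use of the hypothesis $d_\rho\geqslant k$ via $d_\rho\|\hat{\mathbf{1}_A}(\rho)\|_{HS}^2\leqslant |G||A|$ yields $\|\hat{\mathbf{1}_A}(\rho)\|_{\mathrm{op}}\leqslant\sqrt{|G||A|/k}$; comparing main term and error recovers exactly the threshold $|A||B||C|>|G|^3/k$. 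This is the standard non-abelian Fourier-analytic rendering of Gowers' argument. Gowers' original proof is phrased instead in terms of the bipartite Cayley graph associated to one of the sets and a second-eigenvalue estimate, where quasirandomness (the lower bound $k$ on nontrivial representation dimensions) forces nontrivial eigenspaces of the convolution operator to have multiplicity $\geqslant k$ and hence bounds the second eigenvalue. The two routes are essentially a change of language: your Plancherel-based bound on $\|\hat{\mathbf{1}_A}(\rho)\|_{\mathrm{op}}$ is precisely the eigenvalue bound in Fourier coordinates. Your version is slightly more self-contained and avoids setting up the graph-theoretic machinery, at the cost of requiring the reader to be comfortable with the normalizations of non-abelian Fourier analysis, which you flagged correctly as the one place to be careful.
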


Same argument as in the proof of Corollary~\ref{cor:dg2} but using Theorem~\ref{thmgirthsec3} and the preceding conclusion on the diameter gives the required result.
\begin{corollary}\label{cor:dg3}
The sequence $Cay(SL_{3}(\mathbb{F}_{p}),\lbrace A_{p}^4, B_{p}^4\rbrace)$ as $p\to\infty$ is large girth dg-bounded.
\end{corollary}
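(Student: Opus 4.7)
The plan is to combine the two ingredients already in hand, exactly as in the dimension-two case (Corollary~\ref{cor:dg2}), and package them into the definition of dg-boundedness. Concretely, Theorem~\ref{thmgirthsec3} furnishes an absolute constant $K_1>0$ with $\gi Cay(SL_3(\mathbb F_p),\{A_p^4,B_p^4\})\geqslant K_1\log p$ for all sufficiently large primes $p$, while the diameter discussion that just preceded the corollary produces an absolute constant $K_2>0$ with $\di Cay(SL_3(\mathbb F_p),\{A_p^4,B_p^4\})\leqslant K_2\log p$. Dividing these two inequalities immediately gives
\[
\frac{\di Cay(SL_3(\mathbb F_p),\{A_p^4,B_p^4\})}{\gi Cay(SL_3(\mathbb F_p),\{A_p^4,B_p^4\})}\leqslant \frac{K_2}{K_1},
\]
which is the dg-boundedness constant. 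The large girth property follows from the same lower bound, since $K_1\log p\to\infty$ as $p\to\infty$.

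The only subtlety is that the upper bound $\di=O(\log p)$ was sketched rather than stated as a numbered lemma, so I would first formalize it. Starting from $S=\{A_p^{\pm 4},B_p^{\pm 4}\}$, freeness of $\langle A^4,B^4\rangle$ in $SL_3(\mathbb Z)$ together with the girth lower bound yields $|S^k|\geqslant 3^k$ for every $k\leqslant \tfrac{1}{2}K_1\log p$, so choosing $k=c\log p$ for a small absolute $c>0$ gives a symmetric set $S'=S^k$ of size at least $p^{c\log 3}$, which certainly exceeds $|SL_3(\mathbb F_p)|^{1-C_3\epsilon}$ once $\epsilon$ is picked small enough in Proposition~\ref{propBrGrTa}. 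Iterating the tripling estimate of Proposition~\ref{propBrGrTa} a bounded number of times (each step costing only a constant factor in the exponent of $S$) produces a symmetric set $S''=S^{O(\log p)}$ of density at least a fixed positive power of $|SL_3(\mathbb F_p)|$. Finally, Proposition~\ref{propGow}, applied with $A=B=C=S''$ and using that the minimal nontrivial representation dimension of $SL_3(\mathbb F_p)$ grows polynomially in $p$ (by Landazuri--Seitz-type bounds), forces $(S'')^3=SL_3(\mathbb F_p)$, whence $S^{O(\log p)}=SL_3(\mathbb F_p)$.

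With the diameter bound so established, the corollary is a one-line computation. The main obstacle, and essentially the only one, is bookkeeping the constants in the tripling/Gowers chain to make sure that the total exponent on $S$ remains $O(\log p)$; since both Proposition~\ref{propBrGrTa} and Proposition~\ref{propGow} are used only a bounded number of times (the number depending on $\epsilon$ and on the density exponent needed for Gowers), this is routine. No new ideas beyond Section~\ref{sec:strategy} are required, and in particular no use of expansion of the graphs is made along the way.
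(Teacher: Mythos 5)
Your proof of the corollary takes exactly the paper's route: it is the quotient of the girth lower bound from Theorem~\ref{thmgirthsec3} by the $O(\log p)$ diameter upper bound established via Propositions~\ref{propBrGrTa} and~\ref{propGow} in the paragraph preceding the corollary, just as Corollary~\ref{cor:dg2} was obtained in dimension~$2$.

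One slip in your recapitulation of the diameter bound, though it does not affect the corollary itself: you assert that $|S'|\geqslant p^{c\log 3}$ ``certainly exceeds $|SL_3(\mathbb F_p)|^{1-C_3\epsilon}$ once $\epsilon$ is picked small enough,'' but the inequality runs the wrong way. For small $\epsilon$ the threshold $|SL_3(\mathbb F_p)|^{1-C_3\epsilon}$ is close to $|SL_3(\mathbb F_p)|\asymp p^{8}$, which is far larger than $p^{c\log 3}$ when $c$ is a small absolute constant; and if $S'$ already cleared that threshold there would be nothing left to iterate. The correct bookkeeping is that $S'$ has size only a small fixed positive power of $|SL_3(\mathbb F_p)|$, so one falls into alternative~(2) of Proposition~\ref{propBrGrTa}; iterating the tripling $|A^3|\geqslant|A|^{1+\epsilon}$ a bounded number of times (depending only on $\epsilon$ and the initial density exponent) eventually crosses the $|SL_3(\mathbb F_p)|^{1-C_3\epsilon}$ threshold, after which Gowers' Proposition~\ref{propGow}, together with the (implicit) polynomial lower bound on the minimal nontrivial representation dimension of $SL_3(\mathbb F_p)$, finishes. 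This is what the paper means by ``use at most a constant times Proposition~\ref{propBrGrTa}(2).'' Your iteration step is the right idea; only the threshold sentence preceding it is misstated.
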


\section{Dimension $n\geqslant 4$}\label{sec:4}

\subsection{Girth}

We first show the following proposition which deals with the case $n=4$.
\begin{proposition}\label{mainpropsec4}
	Let $A = \begin{pmatrix}
	1 & a & 0 & 0\\
	0 & 1 & a & 0\\
	0 & 0 & 1 & a\\
	0 & 0 & 0 & 1\\
	\end{pmatrix}$ and $B = \begin{pmatrix}
	1 & 0 & 0 & 0\\
	b & 1 & 0 & 0\\
	0 & b & 1 & 0\\
	0 & 0 & b & 1
	\end{pmatrix}\in SL_{4}(\mathbb{Z})$ with $a,b\geqslant 2$. Then $\forall l\geqslant 6,\, \langle A^{l},B^{l}\rangle$ is a free subgroup in $SL_{4}(\mathbb{Z})$. 
\end{proposition}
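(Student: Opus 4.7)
The proof will adapt the strategy from Proposition~\ref{mainpropsec3}, carrying out the dimension-$4$ analogue of the ``dominant-first-entry'' argument. Write $X = A^l$ and $Y = B^l$. Because $A$ and $B$ are unipotent Jordan-block-like matrices, $X$ is upper-triangular with diagonal $1$ and superdiagonals $la$, $\binom{l}{2}a^2$, $\binom{l}{3}a^3$, and $Y$ is the lower-triangular counterpart with $b$'s. The plan is to study an arbitrary reduced word $\prod_{i=1}^k X^{r_i}Y^{s_i}$ with $r_i,s_i\in\mathbb Z\setminus\{0\}$ and show it is never the identity in $SL_4(\mathbb Z)$.

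First I would compute the $4\times 4$ matrix $X^{r_i}Y^{s_i}$ explicitly. Each entry is a polynomial in $a,b,l,r_i,s_i$, and by inspection the $(1,1)$-entry has a leading term proportional to $(lab)^3 r_i^3 s_i^3 / 36$. Factoring out this (or a slightly more convenient) common scalar produces a normalized matrix $N_i$ whose top-left entry is close to $x^3$ (where $x=ab$) while the remaining entries obey a strict anti-lexicographic decay: larger entries sit near the top-left and smaller entries near the bottom-right, with the decay governed by powers of $x$ and by factors $1/(l r_i)$, $1/(l s_i)$ coming from the binomial coefficients. One then records sixteen explicit upper/lower bounds on the entries $a_{pq,i}$ of $N_i$, analogous to the nine bounds appearing in the $n=3$ proof.

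The central step is a form-preservation claim: if $Z\in SL_4(\mathbb Z)$ satisfies a suitable dominance condition on its top row, say
\[
Z_{11} > |Z_{12}| + |Z_{13}| + |Z_{14}| + 1,
\]
then $Z\cdot N_i$ again satisfies the same condition. Expanding $(Z\cdot N_i)_{1j}$ along the top row of $Z$ and substituting the bounds for the entries of $N_i$ reduces the claim to an elementary inequality of the shape
\[
(x^3 - P(x,l))\, Z_{11} \;>\; Q(x,l)\bigl(|Z_{12}|+|Z_{13}|+|Z_{14}|\bigr) + 1,
\]
for explicit polynomials $P,Q$ of lower degree in $x$. This is where the threshold $l\geqslant 6$ enters: the off-diagonal growth of $N_i$ involves the binomial coefficients $\binom{l}{2},\binom{l}{3}$, and the inequality only becomes robust once these are large enough relative to $1$. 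One checks the base case using $N_1$ itself (and $x=ab\geqslant 4$), and then iterates.

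The claim implies that $N := N_1N_2\cdots N_k$ has dominant $(1,1)$-entry, hence $N\neq \mathrm{id}$ and therefore $\prod_i X^{r_i}Y^{s_i}\neq \mathrm{id}$ in $SL_4(\mathbb Z)$. The remaining cases of reduced words beginning with $Y^{\pm 1}$ or ending with $X^{\pm 1}$ are dispatched exactly as in Proposition~\ref{mainpropsec3}: such a product either has the same dominant form (so is non-identity), or collapses to a non-zero power of $X$ or $Y$, both of which are of infinite order and hence non-trivial. This establishes freeness of $\langle A^l,B^l\rangle$ for all $l\geqslant 6$.

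The principal obstacle is computational: one must handle sixteen entries instead of nine, with each entry being a polynomial of higher degree in $l,r_i,s_i,a,b$, and locate the sharp $l$-threshold where the form-preservation inequality becomes valid. Organizing the bounds so that the ``error'' contributions in $Q(x,l)$ do not swamp the ``main'' term $x^3 Z_{11}$ in the $(1,1)$-slot is the only delicate point; everything else is a direct transcription of the dimension-$3$ scheme.
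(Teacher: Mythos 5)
Your proposal takes essentially the same route as the paper: express a reduced word as a product of blocks $(A^l)^{r_i}(B^l)^{s_i}$, show that the $(1,1)$-entry dominates the rest of the first row, propagate this dominance inductively under right-multiplication by further blocks, and dispatch reduced words beginning with $Y$ or ending with $X$ exactly as in the $n=3$ case. The one cosmetic difference is that you normalize by the leading binomial factor (mirroring the paper's $n=3$ argument) whereas the paper's $n=4$ proof works directly with the unnormalized entries and defers the inductive inequality to its $n=3$ counterpart.
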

\begin{proof}
	In general, $$A^{k} = \begin{pmatrix}
	1 & {k \choose 1}a & {k \choose 2}a^{2} & {k \choose 3 }a^{3}\\[3pt] 
	0 & 1 & {k \choose 1}a & {k \choose 2}a^{2} \\[3pt] 
	0 & 0 & 1 & {k \choose 1}a \\[3pt] 
	0 & 0 & 0 & 1\\[3pt] 
	\end{pmatrix},$$ where ${k \choose r}$ denotes the usual binomial coefficient.\\
	Fix $l\geqslant 6$. Proceeding as in Proposition \ref{mainpropsec3} we see that, for $r_{i},s_{i}\in \mathbb{Z}\backslash \lbrace 0 \rbrace$, if 
	$$Z_{i} = (A^{l})^{r_{i}}(B^{l})^{s_{i}} = \begin{pmatrix}
	Z_{11_{i}}(a,b) &  Z_{12_{i}}(a,b) & Z_{13_{i}}(a,b) & Z_{14_{i}}(a,b)\\
	Z_{21_{i}}(a,b) &  Z_{22_{i}}(a,b) & Z_{23_{i}}(a,b) & Z_{24_{i}}(a,b)\\
	Z_{31_{i}}(a,b) &  Z_{32_{i}}(a,b) & Z_{33_{i}}(a,b) & Z_{34_{i}}(a,b)\\
	Z_{41_{i}}(a,b) &  Z_{42_{i}}(a,b) & Z_{43_{i}}(a,b) & Z_{44_{i}}(a,b)\\
	\end{pmatrix},$$
	then {\footnotesize $$\begin{pmatrix}
	Z_{11_{i}}(a,b) &  Z_{12_{i}}(a,b) & Z_{13_{i}}(a,b) & Z_{14_{i}}(a,b)\\[3pt] 
	Z_{21_{i}}(a,b) &  Z_{22_{i}}(a,b) & Z_{23_{i}}(a,b) & Z_{24_{i}}(a,b)\\[3pt] 
	Z_{31_{i}}(a,b) &  Z_{32_{i}}(a,b) & Z_{33_{i}}(a,b) & Z_{34_{i}}(a,b)\\[3pt] 
	Z_{41_{i}}(a,b) &  Z_{42_{i}}(a,b) & Z_{43_{i}}(a,b) & Z_{44_{i}}(a,b)\\
	\end{pmatrix} =
	\begin{pmatrix}
		1 & {lr_{i} \choose 1}a & {lr_{i} \choose 2}a^{2} & {lr_{i} \choose 3 }a^{3}\\[3pt] 
	0 & 1 & {lr_{i} \choose 1}a & {lr_{i} \choose 2}a^{2} \\[3pt] 
	0 & 0 & 1 & {lr_{i} \choose 1}a \\[3pt] 
	0 & 0 & 0 & 1\\
	\end{pmatrix}\times \begin{pmatrix}
	1 & 0 & 0 & 0\\[3pt] 
	{ls_{i} \choose 1}b & 1 & 0 & 0 \\[3pt] 
	{ls_{i} \choose 2}b^{2} & {ls_{i} \choose 1}b & 1 & 0 \\[3pt] 
	{ls_{i} \choose 3}b^{3} & {ls_{i} \choose 2}b^{2} & {ls_{i} \choose 1}b & 1\\
	\end{pmatrix} = $$}

	{\footnotesize
	$$ \begin{pmatrix}
	{lr_{i} \choose 3}{ls_{i} \choose 3}a^{3}b^{3} + {lr_{i} \choose 2}{ls_{i} \choose 2}a^{2}b^{2} + {lr_{i} \choose 1}{ls_{i} \choose 1}ab + 1 & {lr_{i} \choose 3}{ls_{i} \choose 2}a^{3}b^{2} + {lr_{i} \choose 2}{ls_{i} \choose 1}a^{2}b + {lr_{i} \choose 1}a & {lr_{i} \choose 3}{ls_{i} \choose 1}a^{3}b + {lr_{i} \choose 2}a^{2} & {lr_{i} \choose 3}a^{3}\\[3pt] 
	{lr_{i} \choose 2}{ls_{i} \choose 3}a^{2}b^{3} + {lr_{i} \choose 1}{ls_{i} \choose 2}ab^{2} + {ls_{i} \choose 1}b & {lr_{i} \choose 2}{ls_{i} \choose 2}a^{2}b^{2} + {lr_{i} \choose 1}{ls_{i} \choose 1}ab + 1 & {lr_{i} \choose 2}{ls_{i} \choose 1}a^{2}b + {lr_{i} \choose 1}a & {lr_{i} \choose 2}a^{2}\\[3pt] 
	{lr_{i} \choose 1}{ls_{i} \choose 3}ab^{3} + {ls_{i} \choose 2}b^{2} & {lr_{i} \choose 1}{ls_{i} \choose 2}ab^{2} + {ls_{i} \choose 1}b & {lr_{i} \choose 1}{ls_{i} \choose 1}ab + 1 & {lr_{i} \choose 1}a\\[3pt] 
	{ls_{i} \choose 3}b^{3} & {ls_{i} \choose 2}b^{2} & {ls_{i} \choose 1}b & 1\\[3pt] 
	\end{pmatrix} $$}

	For $r_{i},s_{i}\in \mathbb{Z}\backslash \lbrace 0 \rbrace, l\geqslant 6,$ we have that the coefficient ${lr_{i} \choose 3}{ls_{i} \choose 3}$ of the highest degree term ($a^{3}b^{3}$) of $Z_{11_{i}}(a,b)$ is non-vanishing. We shall use this fact to show that arbitrary products of $Z_i$'s are non-trivial group elements.	\begin{claim}
		$\forall k\in\mathbb N,\,\prod_{i=1}^kZ_{i} \neq {\rm id}$ in $SL_{4}(\mathbb{Z}).$
	\end{claim}
\begin{proof}[Proof of claim]
We argue by induction on $k$. 
	Clearly in $Z_{1}$, we have $$|Z_{11_{1}}| > |Z_{12_{1}}| + |Z_{13_{1}}| + |Z_{14_{1}}| + 1, $$
	which is the basis of induction. Let the inequality hold for $$Z = Z_{1}Z_{2}\cdots Z_{k} = \begin{pmatrix}
	Z_{11}(a,b) &  Z_{12}(a,b) & Z_{13}(a,b) & Z_{14}(a,b)\\
	Z_{21}(a,b) &  Z_{22}(a,b) & Z_{23}(a,b) & Z_{24}(a,b)\\
	Z_{31}(a,b) &  Z_{32}(a,b) & Z_{33}(a,b) & Z_{34}(a,b)\\
	Z_{41}(a,b) &  Z_{42}(a,b) & Z_{43}(a,b) & Z_{44}(a,b)\\
	\end{pmatrix},$$ 
	i.e., $|Z_{11}| > |Z_{12}| + |Z_{13}| + |Z_{14}| + 1 $. 
	Then for $Z'= Z\times Z_{k+1} = Z\times (A^{l})^{r_{k+1}}(B^{l})^{s_{k+1}},$ we have that
	$$|Z'_{11}| > |Z'_{12}| + |Z'_{12}| + |Z'_{13}| + 1,$$
	by the same argument as in the proof of Claim \ref{mainclaim3} in Proposition \ref{mainpropsec3}.
\end{proof}
	This claim implies that $\forall k\in\mathbb N,\, T = \prod_{i=1}^kA^{lr_{i}}B^{ls_{i}}$ for $l\geqslant 6, r_{i},s_{i}\in \mathbb{Z}\backslash \lbrace 0 \rbrace $ has the property that 
	$$|T_{11}| > |T_{12}| + |T_{13}| + |T_{14}| + 1.$$
	Clearly, this implies that $T \neq B^{s} $ and $T \neq A^{r}$ for any $r,s \in \mathbb{Z}$. Note that $A$ and $B$ are of infinite order in $SL_{4}(\mathbb{Z})$. Thus, we have that $\langle A^{l},B^{l}\rangle $ is a free subgroup of $SL_{4}(\mathbb{Z})$ for any $l\geqslant 6$. 
\end{proof}

For an arbitrary $n\geqslant 4$, we have the following theorem
\begin{theorem}\label{mainthmsec4}
	Let $A = \begin{pmatrix}
	1 & a & 0 & 0& \ldots & 0\\
	0 & 1 & a & 0& \ldots & 0\\
	0 & 0 & 1 & a & \ldots &0\\
	\vdots & & & & & \vdots\\
	&&&&\ldots&a\\
	0 & 0 & 0& 0&\ldots & 1\\
	\end{pmatrix}$ and $B = \begin{pmatrix}
	1 & 0 & 0 & \ldots & &0\\
	b & 1 & 0 & \ldots & &0\\
	0 & b & 1 &  \ldots& &0\\
	0 & 0 & b & \ldots & &0\\
	\vdots &&&&\vdots&\vdots\\
		0& 0 & 0 & \ldots & {\strut \strut b} & 1\\
	\end{pmatrix}\in SL_{n}(\mathbb{Z})$ with $a,b\geqslant 2$. Then $\langle A^{l},B^{l}\rangle \,\, \forall l\geqslant 3(n-1)$ is a free subgroup of $SL_{n}(\mathbb{Z})$.
\end{theorem}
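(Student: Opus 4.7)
The plan is to generalize the induction arguments from Proposition~\ref{mainpropsec3} and Proposition~\ref{mainpropsec4} to arbitrary dimension $n\geqslant 4$. The engine of the proof is an invariant on the first row of products, maintained through an induction on word length, which rules out the possibility of the identity being reached.

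First, I would write down $A^k$ and $B^k$ explicitly: they are unipotent upper and lower triangular matrices with entries $(A^k)_{ij}=\binom{k}{j-i}a^{j-i}$ for $j\geqslant i$ and $(B^k)_{ij}=\binom{k}{i-j}b^{i-j}$ for $i\geqslant j$. Then, for $r,s\in\Z\setminus\{0\}$, the product $(A^l)^r(B^l)^s$ has each entry given by a sum $\sum_k \binom{lr}{k-i}\binom{ls}{k-j}a^{k-i}b^{k-j}$, a polynomial in $a,b$ whose leading monomial in the $(1,1)$ position is $\binom{lr}{n-1}\binom{ls}{n-1}(ab)^{n-1}$, strictly dominating (in degree and in size) the leading monomials in every other entry of the first row. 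This makes the $(1,1)$ position the candidate ``dominant coefficient'' in any product.

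Next, I would set up the induction. For $r_i,s_i\in\Z\setminus\{0\}$ let $Z_i=(A^l)^{r_i}(B^l)^{s_i}$ and $Z=Z_1Z_2\cdots Z_k$. The key invariant is
\[
|Z_{11}| > |Z_{12}|+|Z_{13}|+\cdots+|Z_{1n}|+1.
\]
The base case $k=1$ follows by computing the explicit binomial-coefficient entries of $Z_1$ and comparing. For the inductive step, write $Z'=Z\cdot Z_{k+1}$ and expand $Z'_{1j}=\sum_{m=1}^n Z_{1m}(Z_{k+1})_{mj}$. Using the inductive hypothesis on $Z$ together with sharp estimates on each entry of $Z_{k+1}$ (obtained from the binomial formula above), one shows that the contribution $Z_{11}(Z_{k+1})_{11}$ to $Z'_{11}$ dominates the sum of all the mixed contributions to the rest of the first row. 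This is exactly the role of the hypothesis $l\geqslant 3(n-1)$: it forces each binomial coefficient $\binom{lr_i}{n-1}$ to be large enough relative to the lower-order binomial coefficients $\binom{lr_i}{j}$ ($j<n-1$) that the cross terms in the other first-row entries of $Z_{k+1}$ cannot overcome the $(ab)^{n-1}$ dominance of $(Z_{k+1})_{11}$.

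Once the invariant is established for every $k$, the product $Z=\prod_i A^{lr_i}B^{ls_i}$ is never the identity, since the identity has $|Z_{11}|=1$, which violates $|Z_{11}|>|Z_{12}|+\cdots+|Z_{1n}|+1$. Moreover, the same invariant prevents $Z$ from equaling any power $A^r$ (whose first row is $(1,\binom{r}{1}a,\binom{r}{2}a^2,\ldots)$, with the leading entry $1$ never dominating the rest when $r\neq 0$) or any power $B^s$ (upper triangular part trivial). Hence arbitrary reduced words $A^{l r_0}\prod_{i=1}^k (B^{ls_i}A^{lr_{i+1}})B^{ls_{k+1}}$ in $\langle A^l,B^l\rangle$ (with possibly $r_0=0$ or $s_{k+1}=0$) reduce, after multiplying out the prefix/suffix, to one of the already-handled cases, using that $A,B$ have infinite order.

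The main obstacle is the inductive step: I need explicit, sharp bounds on the entries of $Z_{k+1}$ (all $n^2$ of them) and must verify that the resulting linear inequality
\[
(x^{n-1}-\text{lower order})\,|Z_{11}| > \sum_{j=2}^{n}\text{(upper bound for }|Z'_{1j}|\text{)}+1
\]
closes under the hypothesis $A>|B_1|+\cdots+|C_1|+1$, where $x=ab$. The threshold $l\geqslant 3(n-1)$ should appear precisely when comparing the highest binomial coefficient $\binom{lr}{n-1}$ with the subdominant $\binom{lr}{n-2}$ and requiring the ratio $\tfrac{lr-(n-2)}{n-1}$ to be large enough to dominate; tracking this bookkeeping dimension-by-dimension is the routine but delicate part of the proof.
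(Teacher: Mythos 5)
Your proposal matches the paper's own proof (given in detail in the Appendix) essentially step for step: you write the entries of $(A^l)^{r_i}(B^l)^{s_i}$ as polynomials in $a,b$ with binomial coefficients, you isolate the dominant monomial $\binom{lr_i}{n-1}\binom{ls_i}{n-1}(ab)^{n-1}$ in the $(1,1)$ slot, you maintain the same first-row invariant $|Z_{11}|>|Z_{12}|+\cdots+|Z_{1n}|+1$ by induction on word length, you correctly identify where the hypothesis $l\geqslant 3(n-1)$ enters (it forces the ratios $\binom{lr}{n-1}/\binom{lr}{n-2}$ to be large enough, combined with $a,b\geqslant 2$, that the paper's geometric-series estimates $\tfrac{1}{4^{u+v-2}}\cdot\tfrac{16}{15}$ for the normalized off-diagonal entries hold), and you dispose of the remaining reduced words (pure powers and prefix/suffix cases) exactly as the paper does. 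This is the same approach and it is correct.
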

\begin{proof}
		Proceed as in the proof of Proposition \ref{mainpropsec4} or see the Appendix (Section \ref{Appendix}). 
\end{proof}

\begin{remark}
	A more refined analysis of the inequalities can show that $\langle A^{l}, B^{l} \rangle,\,\, \forall l> 2n,$ is a free subgroup in $SL_{n}(\mathbb{Z}),$ at the cost of making the proof longer. However, since we are interested in giving explicit examples of dg-bounded graphs of large girth, as long as we give some explicit constant $C(n)>0$ (depending only on dimension $n$) such that $\langle A^{l}, B^{l}\rangle, \,\, \forall l \geqslant C(n),$ is a free subgroup in $SL_{n}(\mathbb{Z})$ we are done. By Theorem~\ref{mainthmsec4}, $C(n)=3(n-1)$ has this property.
\end{remark}
\begin{theorem}\label{thm:girth4}
	Fix $n \geqslant 4, l \geqslant 3(n-1)$. There exists a constant $C>0$ such that for all primes $p$ we have $\gi Cay(\langle A_{p}^{l},B_{p}^{l}\rangle, \{ A_{p}^{l},B_{p}^{l}\})
	 \geqslant C\log p$.
\end{theorem}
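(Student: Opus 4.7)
The plan is to adapt the girth arguments of Proposition~\ref{mainpropsec2} and Theorem~\ref{thmgirthsec3} to arbitrary dimension $n\geqslant 4$, now using Theorem~\ref{mainthmsec4} as the freeness input. By vertex-transitivity of Cayley graphs, one may assume any short cycle is based at the identity. A non-trivial closed cycle of length $g$ then corresponds to a reduced word $W=g_1g_2\cdots g_g$ with each $g_i\in\{A^{\pm l},B^{\pm l}\}$ that reduces to the identity modulo $p$. Since $l\geqslant 3(n-1)$, Theorem~\ref{mainthmsec4} guarantees that $\langle A^l,B^l\rangle$ is free in $SL_n(\mathbb{Z})$, so $W\neq\mathrm{id}$ as an integer matrix, and hence at least one entry of $W-\mathrm{id}$ is a nonzero integer. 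For $W\equiv\mathrm{id}\pmod{p}$ this entry must be a nonzero integer multiple of $p$, hence of absolute value at least $p$.

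The remaining task is to upper bound $\|W\|_\infty$ explicitly as a function of $g$. By the binomial formula for $(I+aN)^{\pm l}$ and its transpose analogue (with $N$ the upper-shift nilpotent), each of the matrices $A^l,A^{-l},B^l,B^{-l}$ is triangular with $\ell^\infty$-norm at most $K:=\binom{l+n-2}{n-1}\max(a,b)^{n-1}$. Iterating submultiplicativity $\|M_1M_2\|_\infty\leqslant n\|M_1\|_\infty\|M_2\|_\infty$ across the $g$ factors of $W$ then yields $\|W\|_\infty\leqslant (nK)^g/n$. Combining with the lower bound $\|W-\mathrm{id}\|_\infty\geqslant p$ from freeness immediately gives $g\geqslant C\log p$ with $C=(\log(nK))^{-1}>0$, depending only on $a,b,l,n$, exactly as required.

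The conceptual input is already handled by Theorem~\ref{mainthmsec4}; the entry bound is an elementary consequence of the explicit triangular form of $(A^l)^{\pm 1}$ and $(B^l)^{\pm 1}$. The only mild technical point is ensuring the upper bound on $\|W\|_\infty$ is uniform over all reduced words of a given length, which is handled cleanly by matrix-norm submultiplicativity. Alternatively, one could mirror the more refined syllable-by-syllable analysis of Theorem~\ref{thmgirthsec3}: group $W$ into syllables $(A^l)^{r_i}(B^l)^{s_i}$ with $\sum(|r_i|+|s_i|)=g$, compute the $(1,1)$ entry as a polynomial in $ab$ of degree at most $(n-1)k$ with leading coefficient $\prod_i\binom{lr_i}{n-1}\binom{ls_i}{n-1}$, and apply an AM-GM optimization; this route gives the same logarithmic lower bound, possibly with a sharper constant.
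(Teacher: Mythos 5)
Your argument is correct and follows the same conceptual route as the paper: freeness of $\langle A^l,B^l\rangle$ (Theorem~\ref{mainthmsec4}) forces a nontrivial word $W$ to have some entry deviating from the identity, and reduction mod $p$ forces that deviation to be at least $p$ in absolute value, while the entries of $W$ grow at most exponentially in the word length. The small differences are presentational and slightly in your favour: the paper singles out the $(1,1)$ entry (which its freeness proof shows dominates and exceeds $1$) and then invokes the syllable-by-syllable estimate ``as in Theorem~\ref{thmgirthsec3}'', which compresses the bookkeeping of the exponents $r_i,s_i$; you instead bound the whole word via $\ell^\infty$-entry submultiplicativity ($\lVert M_1M_2\rVert_\infty\leqslant n\lVert M_1\rVert_\infty\lVert M_2\rVert_\infty$), which makes the dependence on the Cayley-graph length $g=\sum(|r_i|+|s_i|)$ transparent and avoids having to choose a specific entry. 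Your explicit constant $K=\binom{l+n-2}{n-1}\max(a,b)^{n-1}$ is a valid uniform bound for the largest entry of $A^{\pm l}$ and $B^{\pm l}$ (the negative-power binomial $\binom{-l}{j}=(-1)^j\binom{l+j-1}{j}$ indeed dominates), and the resulting constant $C=(\log(nK))^{-1}$ depends on $n,l,a,b$ but not on $p$, which is exactly what the theorem requires. In short: correct, essentially the same proof, with the norm-submultiplicativity packaging being a marginally cleaner way to reach the same $g\geqslant C\log p$ conclusion.
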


\begin{proof}
	By Theorem \ref{mainthmsec4}, $\lbrace A^{l},B^{l}\rbrace $ generate a free subgroup in $SL_{n}(\mathbb{Z})$. It follows that a  product 
	$$(A^{l})^{r_{1}}(B^{l})^{s_{1}}\cdots (A^{l})^{r_{k}}(B^{l})^{s_{k}}$$ can become identity in $SL_{n}(\mathbb{F}_{p})$ only if the entry in the $(1,1)^{\rm th}$ position of $(A^{l})^{r_{1}}(B^{l})^{s_{1}}\cdots (A^{l})^{r_{k}}(B^{l})^{s_{k}}$ is strictly larger than $p$ (again, 
	without loss of generality, 
	$a,b\not\equiv 0 (\mod p)$ and $a,b\not\equiv 1 (\mod p)$).
	Arguing as in the proof of Theorem \ref{thmgirthsec3}, we see that this term has order  $O(a^{nk+1}b^{nk+1})$. It follows that there exists a constant $C$ (independent of $p$) such that $k >C\log p$. Thus, the girth is at least $C\log p$.
\end{proof}

\subsection{Diameter} To estimate the diameter, we first show that there exist infinitely many numbers $l\in \mathbb{N}$ such that $\langle A^{l}_p, B^{l}_p\rangle =SL_{n}(\mathbb{F}_{p})$ for all sufficiently large primes $p$ and fixed $n$.\\

Fix a prime $q$ with $n\equiv 1 (\mod q)$. We have already seen that $\forall l\geqslant 3(n-1), \langle A^{l}, B^{l}\rangle $ is  a free subgroup of $SL_{n}(\mathbb{Z})$, where as usual $a,b\geqslant 2$. We shall first reduce our matrices  $$A = \begin{pmatrix}
	1 & a & 0 & 0& \ldots & 0\\
	0 & 1 & a & 0& \ldots & 0\\
	0 & 0 & 1 & a & \ldots &0\\
	\vdots & & & & & \vdots\\
	&&&&\ldots&a\\
	0 & 0 & 0& 0&\ldots & 1\\
	\end{pmatrix} \text{ and } B = \begin{pmatrix}
	1 & 0 & 0 & \ldots & &0\\
	b & 1 & 0 & \ldots & &0\\
	0 & b & 1 &  \ldots& &0\\
	0 & 0 & b & \ldots & &0\\
	\vdots &&&&\vdots&\vdots\\
		0& 0 & 0 & \ldots & {\strut \strut b} & 1\\
	\end{pmatrix}\in SL_{n}(\mathbb{Z})$$ to  matrices 
	$$A_{q} = \begin{pmatrix}
1 & 1 & 0 & 0&\ldots & 0\\
0 & 1 & 1 & 0& \ldots & 0\\
0 & 0 & 1 & 1 & \ldots & 0\\
\vdots & & & & & \vdots\\
&&&&\ldots&1\\
0 & 0 & 0& 0& \ldots & 1\\
\end{pmatrix} \text{ and } B_{q} = \begin{pmatrix}
1 & 0 & 0 & \ldots & &0\\
	1 & 1 & 0 & \ldots & &0\\
	0 & 1 & 1 &  \ldots& &0\\
	0 & 0 & 1 & \ldots & &0\\
\vdots &&&&\vdots&\vdots\\
0& 0 & 0 & \ldots & {\strut \strut 1} & 1\\
\end{pmatrix}\in SL_{n}(\mathbb{F}_{q}).$$
Obviously, $a,b\equiv 1 (\mod q)$ ensures this. Moreover, keeping this values of $a$ and $b$ fixed, and using a classical result of Lucas, Theorem~\ref{Lucas},
we can guarantee that there are infinitely many powers $l$ of $A,B$ which reduce $\mod q$ to these same matrices in $SL_{n}(\mathbb{F}_{q})$.

\begin{theorem}[Lucas~\cite{MR1505161}]\label{Lucas}
	A binomial coefficient ${\displaystyle \tbinom {\alpha }{\beta}}$ is divisible by a prime $q$ if and only if at least one of the base $q$ digits of $\beta$ is greater than the corresponding digit of $\alpha$.
\end{theorem}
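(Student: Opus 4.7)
The plan is to prove a slightly stronger congruence that immediately yields the divisibility criterion, namely Lucas' congruence
$$\binom{\alpha}{\beta} \equiv \prod_{i\geqslant 0} \binom{\alpha_i}{\beta_i} \pmod{q},$$
where $\alpha = \sum_i \alpha_i q^i$ and $\beta = \sum_i \beta_i q^i$ are the base-$q$ expansions (so $0 \leqslant \alpha_i,\beta_i \leqslant q-1$). Once this is in hand, the theorem follows at once: the product on the right is a product of ordinary integer binomial coefficients $\binom{\alpha_i}{\beta_i}$ which lie in $\{0,1,\ldots,q-1\}$ (by the standard bound $\binom{\alpha_i}{\beta_i} \leqslant 2^{q-1}$ \dots in fact, more directly, each such coefficient is zero iff $\beta_i > \alpha_i$), so the product vanishes modulo $q$ precisely when some digit $\beta_i$ exceeds $\alpha_i$.

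The cleanest route to the congruence goes through generating functions in $\mathbb{F}_q[x]$, exploiting the Freshman's dream $(1+x)^q = 1 + x^q$ in characteristic $q$. First I would iterate this to obtain $(1+x)^{q^i} = 1 + x^{q^i}$ for every $i \geqslant 0$. Then I would expand
$$(1+x)^{\alpha} \;=\; \prod_{i \geqslant 0} (1+x)^{\alpha_i q^i} \;=\; \prod_{i \geqslant 0} \bigl((1+x)^{q^i}\bigr)^{\alpha_i} \;=\; \prod_{i \geqslant 0} (1 + x^{q^i})^{\alpha_i}$$
in $\mathbb{F}_q[x]$ and compare coefficients of $x^\beta$ on the two sides: on the left it is $\binom{\alpha}{\beta} \bmod q$, while on the right, expanding each factor by the ordinary binomial theorem, the coefficient of $x^\beta$ is $\sum \prod_i \binom{\alpha_i}{c_i}$ where the sum ranges over tuples $(c_i)$ with $0 \leqslant c_i \leqslant \alpha_i \leqslant q-1$ and $\sum_i c_i q^i = \beta$.

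The delicate step, and the one I would treat most carefully, is extracting this coefficient. Because each admissible $c_i$ satisfies $0 \leqslant c_i \leqslant q-1$, the equality $\sum c_i q^i = \beta$ is nothing but a base-$q$ representation of $\beta$; by uniqueness it forces $c_i = \beta_i$ for all $i$. Hence the sum collapses to the single term $\prod_i \binom{\alpha_i}{\beta_i}$, which gives the claimed congruence. If some $\beta_i > \alpha_i$, no admissible tuple $(c_i)$ exists and the coefficient is zero, consistent with the corresponding factor $\binom{\alpha_i}{\beta_i}$ being zero in $\mathbb{Z}$. Combining the congruence with the trivial observation that a prime $q$ divides a product of non-negative integers iff it divides a factor, and that $\binom{\alpha_i}{\beta_i} < q$ is divisible by $q$ iff it equals $0$ iff $\beta_i > \alpha_i$, completes the proof.
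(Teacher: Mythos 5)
The paper does not prove this result; it cites Lucas' 1878 paper as a classical fact, so there is no argument in the paper to compare against. Your proof is the standard generating-function argument via the Freshman's dream in $\mathbb{F}_q[x]$, and the core of it is correct: the factorization $(1+x)^{\alpha} = \prod_i (1+x^{q^i})^{\alpha_i}$ in $\mathbb{F}_q[x]$, the coefficient extraction, and the appeal to uniqueness of base-$q$ digits to collapse the sum all go through as you describe.

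One slip in the final paragraph should be fixed. You assert that $\binom{\alpha_i}{\beta_i} < q$, but this is false in general: for $q = 5$, $\alpha_i = 4$, $\beta_i = 2$ one has $\binom{4}{2} = 6 > 5$. Fortunately the bound is not what the argument needs. What you actually need is that $q \nmid \binom{\alpha_i}{\beta_i}$ whenever $0 \le \beta_i \le \alpha_i \le q-1$, and this holds for a different reason: since $\alpha_i < q$, the prime $q$ does not divide $\alpha_i!$, nor $\beta_i!$, nor $(\alpha_i - \beta_i)!$; comparing $q$-adic valuations in $\binom{\alpha_i}{\beta_i} \cdot \beta_i! \cdot (\alpha_i-\beta_i)! = \alpha_i!$ gives $v_q\bigl(\binom{\alpha_i}{\beta_i}\bigr) = 0$. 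Equivalently, one can avoid this entirely by staying in $\mathbb{F}_q$ throughout: the congruence $\binom{\alpha}{\beta} \equiv \prod_i \binom{\alpha_i}{\beta_i} \pmod q$ says the left side vanishes in the field $\mathbb{F}_q$ iff some factor on the right vanishes in $\mathbb{F}_q$, and a factor with $\beta_i \le \alpha_i \le q-1$ is a nonzero product of units $\alpha_i(\alpha_i-1)\cdots(\alpha_i-\beta_i+1)/\beta_i!$ in $\mathbb{F}_q$. With that correction the proof is complete.
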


We know that $$A^{k} =  \begin{pmatrix}
1 & {k\choose 1}a & {k \choose 2 }a^{2} & &\ldots & {k\choose n-1}a^{n-1}\\[3pt] 
0 & 1 & {k\choose 1}a & {k \choose 2 }a^{2}& \ldots & {k\choose n-2}a^{n-2}\\[3pt] 
0 & 0 & 1 & {k\choose 1}a & \ldots & {k\choose n-3}a^{n-3}\\[3pt] 
\vdots & & & & & \vdots\\[3pt] 
&&&&\ldots&{k\choose 1}a\\[3pt] 
0 & 0 & 0& 0& \ldots & 1\\
\end{pmatrix}$$
We want the binomial coefficients ${k\choose i} \,\, \forall 1<i\leqslant n-1$ to be divisible by $q$. By Lucas' result, Theorem~\ref{Lucas}, a binomial coefficient $ {\tbinom {k}{i}}$ is divisible by a prime $q$ if and only if at least one of the base $q$ digits of $i$ is greater than the corresponding digit of $k$. So expressing $ k = a_{r}q^{r} +\ldots + a_{1}q + 1\cdot q^{0}$ and the $i$'s for $2\leqslant i \leqslant n-1$ similarly we see that it is sufficient to choose $k$ so that all the $a_{j}$'s are $0$ till the place where the base $q$ representation of $n-1$ ends.
For example, for $q = 2$ and $n=7 = 2^{2}+2+1$ we can take $k$ to be in $\lbrace 1,2^3+1,2^4+1,\ldots\rbrace$. See Theorem \ref{principalthm} for more details.

Thus, we get the matrices  $$A_{q} =\begin{pmatrix}
1 & 1 & 0 & 0&\ldots & 0\\
0 & 1 & 1 & 0& \ldots & 0\\
0 & 0 & 1 & 1 & \ldots & 0\\
\vdots & & & & & \vdots\\
&&&&\ldots&1\\
0 & 0 & 0& 0& \ldots & 1\\
\end{pmatrix} \text{ and } B_{q} = \begin{pmatrix}
1 & 0 & 0 & \ldots & &0\\
	1 & 1 & 0 & \ldots & &0\\
	0 & 1 & 1 &  \ldots& &0\\
	0 & 0 & 1 & \ldots & &0\\
\vdots &&&&\vdots&\vdots\\
0& 0 & 0 & \ldots & {\strut \strut 1} & 1\\
\end{pmatrix}\in SL_{n}(\mathbb{F}_{q})$$ as images under $\mod q$ reduction of matrices $\lbrace A^{k}, B^{k}\rbrace$ which in addition generate a free subgroup in $SL_{n}(\mathbb{Z})$ (if we choose $k$ to be a sufficiently large power bigger than $3(n-1)$). The trick now is to show that these matrices $\{A_q, B_q\}$ generate $SL_{n}(\mathbb{F}_{q})$. For all but $n=4$ this fact follows from the following result (whose proof is non-constructive).

\begin{theorem}[Gow-Tamburini~\cite{GoTa1993}]\label{thm:GoTa}
	If $n\geqslant 2, n\not=4$, then the above matrices $A_{q}$, $B_{q}$ viewed as elements of $SL_{n}(\mathbb{Z})$ generate $SL_{n}(\Z)$. 
	When $n=4$, they generate a subgroup of index $8$ in $SL_{4}(\mathbb{Z})$.
\end{theorem}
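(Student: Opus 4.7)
My plan is to reduce the claim to the classical fact that $SL_n(\mathbb{Z})$ is generated by the elementary transvections $E_{ij}(1) = I + E_{ij}$ with $i \neq j$, and then to construct each such transvection as a word in $A_q^{\pm 1}$ and $B_q^{\pm 1}$. The starting observation is that $A_q = I + N$ and $B_q = I + N^{t}$, where $N = \sum_{i=1}^{n-1} E_{i,i+1}$ is the standard principal nilpotent. At the Lie-algebra level $(N, N^{t}, [N, N^{t}])$ is a principal $\mathfrak{sl}_2$-triple, and the adjoint action of this triple decomposes $\mathfrak{sl}_n$ into irreducibles of dimensions $3, 5, \ldots, 2n-1$, so $\{N, N^{t}\}$ generates $\mathfrak{sl}_n$ as a Lie algebra. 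I would use this Lie-theoretic generation as the blueprint for the group-level argument.

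Concretely, set $G = \langle A_q, B_q\rangle$. First I would form nested commutators $[A_q, [A_q, [B_q, \ldots ]]]$: since $N$ is nilpotent of index $n$, each bracket pushes the leading perturbation deeper into the descending central series of the unipotent radical, and after a bounded number of brackets one reaches a matrix of the form $I + c\, E_{1n}$ for some nonzero integer $c$. A short divisibility argument (or a separate specialized commutator) then upgrades this to $E_{1n}(\pm 1) \in G$. Once a single long-root transvection is available, I would propagate it throughout the root system by conjugation: since $A_q$ acts on column vectors by the upper shift $e_j \mapsto e_j + e_{j-1}$ and $B_q$ by the lower shift $e_j \mapsto e_j + e_{j+1}$, conjugating an existing transvection by $A_q^{\pm 1}$ or $B_q^{\pm 1}$ returns its neighbor in the root lattice modulo already-available transvections. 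A straightforward induction on graph distance in the root lattice then places every $E_{ij}(1)$ inside $G$.

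For the anomalous case $n = 4$, I would locate the index-$8$ obstruction via reduction modulo $2$, using the exceptional isomorphism $SL_4(\mathbb{F}_2) \cong A_8$. Lucas' theorem gives $(I+N)^{2^k} \equiv I + N^{2^k} \pmod{2}$, whence $A_q^{4}, B_q^{4} \equiv I \pmod{2}$, so both reduce to elements of order dividing $4$ in $A_8$. A direct combinatorial check should identify the subgroup they generate in $A_8$ with a maximal $A_7 \leqslant A_8$ of index $8$; combined with a refined version of the commutator extraction adapted to land entirely inside the level-$2$ congruence subgroup $\Gamma(2) \leqslant SL_4(\mathbb{Z})$, this yields the equality $[SL_4(\mathbb{Z}) : G] = 8$.

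The main obstacle will be the combinatorial bookkeeping of the iterated commutators: one must be certain that the leading terms do not degenerate, and the $n = 4$ anomaly is precisely the signal that such degenerations do occur in small dimensions. In particular the argument must track the $2$-adic valuations of the binomial coefficients $\binom{k}{r}$ appearing in powers of $A_q$ and $B_q$, which are the very quantities that govern whether a given commutator survives or collapses. For a complete writeup I would either appeal to the original argument of Gow and Tamburini~\cite{GoTa1993}, or present short explicit commutator witnesses in a few base dimensions and propagate by induction on $n$.
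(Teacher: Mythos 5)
This is a result the paper \emph{cites} rather than proves: Theorem~\ref{thm:GoTa} is attributed to Gow and Tamburini~\cite{GoTa1993} and invoked as a black box. The paper's own constructive contribution in this vicinity is Proposition~\ref{prop5.8}, which proves generation of $SL_n(\mathbb{F}_q)$ (not of $SL_n(\Z)$) only in the special case $n = q^t + 1$, and it is that proposition the paper uses to circumvent the $n = 4$ exception (via $q = 3$, $4 = 3+1$). There is therefore no paper proof for your sketch to be compared against.

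Read on its own terms, your plan --- reduce to elementary transvections, extract a long-root transvection by iterated commutators, propagate through the root lattice by conjugation, and locate the $n=4$ anomaly via $SL_4(\mathbb{F}_2) \cong A_8$ --- is the right framework, but two steps remain genuine gaps rather than bookkeeping. First, the passage from $I + cE_{1n} \in G$ to $E_{1n}(1) \in G$ is the crux of the whole theorem and you do not carry it out: the constants $c$ produced by nested commutators grow with $n$, and the standard route is to exhibit two such constants with $\gcd$ equal to $1$ (since $\{x : E_{1n}(x) \in G\}$ is a subgroup of $\Z$), which is precisely the computation that can fail in low dimension and is the source of the $n=4$ exception. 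Saying one should ``track the $2$-adic valuations of the binomial coefficients'' names the phenomenon but does not discharge it. Second, for $n=4$ you correctly observe that $A_q, B_q$ reduce $\bmod\, 2$ to elements of order dividing $4$ in $A_8$, and identifying the subgroup they generate as an $A_7$ of index $8$ gives the bound $[SL_4(\Z):G] \geqslant 8$; but equality requires $\Gamma(2) \leqslant G$, and the phrase ``refined version of the commutator extraction adapted to land entirely inside $\Gamma(2)$'' is a placeholder for the hardest remaining work, not a proof of it. If you intend to defer these two steps to~\cite{GoTa1993}, say so explicitly; as a self-contained argument they are missing.
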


For infinitely many values of $n$ in a suitable form (when $n$ is of the form $q^{t}+1, t\in\N$), we give a constructive proof of the required fact. 
Our arguments also handle the exceptional case $n=4$ (because $4=3+1$), where Theorem~\ref{thm:GoTa} does not apply.

\begin{proposition}\label{prop5.8}
	Let $n\geqslant 4$, fix a prime $q$ with $n = q^{t}+1$ and $t\in\N$. Let $$A' = 
	\begin{pmatrix}
1 & 1 & 0 & 0&\ldots & 0\\
0 & 1 & 1 & 0& \ldots & 0\\
0 & 0 & 1 & 1 & \ldots & 0\\
\vdots & & & & & \vdots\\
&&&&\ldots&1\\
0 & 0 & 0& 0& \ldots & 1\\
\end{pmatrix} \text{ and } B'= \begin{pmatrix}
1 & 0 & 0 & \ldots & &0\\
	1 & 1 & 0 & \ldots & &0\\
	0 & 1 & 1 &  \ldots& &0\\
	0 & 0 & 1 & \ldots & &0\\
\vdots &&&&\vdots&\vdots\\
0& 0 & 0 & \ldots & {\strut \strut 1} & 1\\
\end{pmatrix}\in SL_{n}(\mathbb{F}_{q}).$$ Then $\langle A',B'\rangle = SL_{n}(\mathbb{F}_{q})$.
\end{proposition}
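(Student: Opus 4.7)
The plan is to first extract two \emph{corner} elementary transvections from high powers of $A'$ and $B'$ via Lucas's theorem, and then to sweep out all of $SL_n(\mathbb{F}_q)$ using these together with $A'$, $B'$ and iterated commutators. Writing $A' = I + N$, where $N$ is the superdiagonal nilpotent matrix with $1$'s on the superdiagonal, the hypothesis $n = q^t + 1$ forces $N^{q^t} = E_{1,n}$ and $N^{q^t + 1} = 0$. Thus
$$
A'^{q^t} \;=\; \sum_{j=0}^{q^t} \binom{q^t}{j}\, N^j,
$$
and by Lucas's theorem (Theorem~\ref{Lucas}) every $\binom{q^t}{j}$ with $0 < j < q^t$ is divisible by $q$, so the sum collapses in $SL_n(\mathbb{F}_q)$ to $T_1 := A'^{q^t} = I + E_{1,n}$. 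Symmetrically, $T_2 := B'^{q^t} = I + E_{n,1}$, and both lie in $H := \langle A', B'\rangle$.

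I would then observe that $T_1, T_2$ fix each $e_j$ with $2 \leqslant j \leqslant n-1$, while acting on $\mathrm{span}(e_1, e_n)$ as the standard generators $\bigl(\begin{smallmatrix}1 & 1\\ 0 & 1\end{smallmatrix}\bigr)$ and $\bigl(\begin{smallmatrix}1 & 0\\ 1 & 1\end{smallmatrix}\bigr)$ of $SL_2(\mathbb{F}_q)$. Hence $\langle T_1, T_2\rangle \cong SL_2(\mathbb{F}_q)$ sits inside $H$, furnishing in particular a Weyl-type element $W$ that interchanges $e_1$ and $e_n$ (with a sign) and fixes the remaining basis vectors. Next, since $T_1$ commutes with $A'$ but not with $B'$ (and vice versa for $T_2$), the commutators $[T_1, B'^k]$ and $[T_2, A'^k]$, together with their conjugates by $W$, produce a family of explicit low-rank perturbations of the identity in $H$; by multiplicative combinations and further commutators one isolates single-entry transvections at various positions. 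Once at least one transvection is in hand in each row and each column, the Steinberg commutator identity
$$
[\,I + s E_{i,j},\; I + t E_{j,k}\,] \;=\; I + st\, E_{i,k} \qquad (i \neq k)
$$
propagates them to every off-diagonal position and every coefficient $\lambda \in \mathbb{F}_q$. The classical fact that elementary transvections generate $SL_n(\mathbb{F}_q)$ then gives $H = SL_n(\mathbb{F}_q)$.

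The main obstacle is the combinatorial bookkeeping in the second step: verifying that the chosen commutators and conjugates genuinely yield enough distinct single-entry transvections to reach every off-diagonal position, and not merely a proper subgroup. The hypothesis $n = q^t + 1$ is essential both for the clean identities $T_1 = I + E_{1,n}$ and $T_2 = I + E_{n,1}$ (it forces $N^{q^t}$ to be the unique nontrivial surviving term in the Lucas collapse) and for the subsequent propagation; this is also precisely why the authors treat the exceptional dimension $n = 4$ (corresponding to $q = 3$, $t = 1$) here constructively, rather than appealing to the Gow--Tamburini Theorem~\ref{thm:GoTa}.
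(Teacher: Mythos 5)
Your first step is correct and matches the paper's opening move (modulo a probable typo there, where the exponent $q$ should read $q^t$): using Lucas, the hypothesis $n = q^t+1$ collapses $A'^{q^t}$ and $B'^{q^t}$ to the corner transvections $T_1 = I + E_{1,n}$ and $T_2 = I + E_{n,1}$, and it is correct that $\langle T_1,T_2\rangle$ realizes $SL_2(\mathbb{F}_q)$ on the plane $\mathrm{span}(e_1,e_n)$. The Steinberg propagation you invoke at the end is also sound once you have the superdiagonal transvections $E_{i,i+1}$ plus one back edge such as $E_{n,1}$.

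The gap is exactly where you flag it: the passage from the corner transvections to single-entry transvections at enough positions is asserted, not demonstrated, and the specific devices you propose do not obviously deliver. For example, $[T_1,B']$ is not close to a transvection — even in the smallest case $n=4$, $q=3$ it has four nonzero off-diagonal entries spread over several rows, and repeating with $B'^k$ or conjugating by the Weyl element $W$ only reshuffles these entries rather than isolating one. Nothing in the sketch shows how to cancel the extra entries, and this is precisely the combinatorial bookkeeping you correctly identify as the main obstacle. What the paper does instead is genuinely different in detail: it builds the explicit word $X = B'^{-1}A'B'^{q^t}A'^{-1}B'A'$, checks by direct computation that $X$ equals $A'$ with its $(n-1,n)$ entry deleted, so that $X_1 = A'X^{-1} = I + E_{n-1,n}$ (and symmetrically $Y_1 = I + E_{n,n-1}$), then uses the $SL_2$ at positions $(n-1,n)$ to strip one more superdiagonal entry at a time, iterating down to recover every $E_{i,i+1}$. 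That concrete cancellation scheme is the content missing from your proposal. To complete your argument you would need either to carry out an analogous explicit word computation starting from $T_1, T_2$, or to prove a lemma showing that the commutators and conjugates you name really do reduce to single-entry transvections after suitable multiplicative cancellation — neither of which is currently present.
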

\begin{proof}
 The prime $q$ is fixed.  We make all the operations in $\mathbb{F}_{q}.$
 Consider the matrices $$A' = \begin{pmatrix}
 1 & 1 & 0 & 0 & 0 & \ldots & 0\\
 0 & 1 & 1 & 0 & 0 & \ldots & 0\\
 0 & 0 & 1 & 1 & 0 & \ldots & 0\\
 \vdots&&&&&&\vdots\\
 0 & \ldots & 0 & 0 & 1 & 1 & 0 \\
 0& \ldots & 0 & 0 & 0 & 1 & 1\\
 0& \ldots & 0 & 0 & 0 & 0 & 1\\
 \end{pmatrix} \text{ and }  B'= \begin{pmatrix}
 1 & 0 & 0 & 0 & 0 &\ldots & 0\\
 1 & 1 & 0 & 0 & 0 & \ldots & 0\\
 0 & 1 & 1 & 0 & 0 & \ldots & 0\\
 \vdots&&&&&&\vdots\\
 0&\ldots & 0 & 1 & 1 & 0 & 0 \\
 0& \ldots. & 0 & 0 & 1 & 1 & 0\\
 0& \ldots & 0 & 0 & 0 & 1 & 1\\
 \end{pmatrix}\in SL_{q^{t}+1}(\mathbb{F}_{q}).$$ Then,  $$(B')^{q^{t}} \equiv \begin{pmatrix}
 1 & 0 & 0 & 0 & 0 & \ldots & 0\\
 0 & 1 & 0 & 0 & 0 & \ldots & 0\\
 0 & 0 & 1 & 0 & 0 & \ldots & 0\\
 \vdots&&&&&&\vdots\\
 0& \ldots & 0 & 0 & 1 & 0 & 0 \\
 0& \ldots & 0 & 0 & 0 & 1 & 0\\
 1& \ldots & 0 & 0 & 0 & 0 & 1\\
 \end{pmatrix} \in \langle A',B'\rangle.$$ We have $$X =B'^{-1}A'(B')^{q^{t}}A'^{-1}B'A' \equiv \begin{pmatrix}
 1 & 1 & 0 & 0 & 0 & \ldots & 0\\
 0 & 1 & 1 & 0 & 0 & \ldots & 0\\
 0 & 0 & 1 & 1 & 0 & \ldots & 0\\
 \vdots&&&&&&\vdots\\
 0 & \ldots & 0 & 0 & 1 & 1 & 0 \\
 0& \ldots & 0 & 0 & 0 & 1 & 0\\
 0& \ldots & 0 & 0 & 0 & 0 & 1\\
 \end{pmatrix}  \in \langle A',B'\rangle $$ 
 
 $$\Rightarrow X_{1} = A'X^{-1} = \begin{pmatrix}
 1 & 0 & 0 & 0 & 0 & \ldots & 0\\
 0 & 1 & 0 & 0 & 0 & \ldots & 0\\
 0 & 0 & 1 & 0 & 0 & \ldots & 0\\
 \vdots&&&&&&\vdots\\
 0 & \ldots & 0 & 0 & 1 & 0 & 0 \\
 0& \ldots & 0 & 0 & 0 & 1 & 1\\
 0& \ldots & 0 & 0 & 0 & 0 & 1\\
 \end{pmatrix} \in \langle A',B'\rangle.$$ Similarly, $$Y_{1} = \begin{pmatrix}
 1 & 0 & 0 & 0 & 0 & \ldots. & 0\\
 0 & 1 & 0 & 0 & 0 &\ldots & 0\\
 0 & 0 & 1 & 0 & 0 & \ldots & 0\\
 \vdots&&&&&&\vdots\\
 0 & \ldots & 0 & 0 & 1 & 0 & 0 \\
 0& \ldots & 0 & 0 & 0 & 1 & 0\\
 0& \ldots & 0 & 0 & 0 & 1 & 1\\
 \end{pmatrix} \in \langle A',B'\rangle .$$ The matrices $\begin{pmatrix}
 1 & 1\\
 0 & 1 
 \end{pmatrix}, \begin{pmatrix}
 1 & 0\\
 1 & 1\\
 \end{pmatrix}$ generate $SL_{2}(\mathbb{Z})$, hence we have 
 
 $$ Y = \begin{pmatrix}
 1 & 0 & 0 & 0 & 0 & \ldots & 0\\
 0 & 1 & 0 & 0 & 0 & \ldots & 0\\
 0 & 0 & 1 & 0 & 0 & \ldots & 0\\
 \vdots&&&&&&\vdots\\
 0 &\ldots& 0 & 0 & 1 & 0 & 0 \\
 0& \ldots & 0 & 0 & 0 & x & y\\
 0&\ldots & 0 & 0 & 0 & z & t\\
 \end{pmatrix} \in \langle A',B'\rangle, \forall x,y,z,t \text{ with } xt-yz \equiv 1 (\mod q).$$ This implies 
 
 $$A'\times Y =  {\tiny \begin{pmatrix}
 1 & 1 & 0 & 0 & 0 & \ldots & 0\\
 0 & 1 & 1 & 0 & 0 & \ldots & 0\\
 0 & 0 & 1 & 1 & 0 & \ldots & 0\\
 \vdots&&&&&&\vdots\\
 0 & \ldots & 0 & 0 & 1 & 1 & 0 \\
 0& \ldots & 0 & 0 & 0 & 1 & 1\\
 0& \ldots& 0 & 0 & 0 & 0 & 1\\
 \end{pmatrix} \times \begin{pmatrix}
 1 & 0 & 0 & 0 & 0 & \ldots & 0\\
 0 & 1 & 0 & 0 & 0 & \ldots & 0\\
 0 & 0 & 1 & 0 & 0 & \ldots & 0\\
 \vdots&&&&&&\vdots\\
  0 & \ldots & 0 & 0 & 1 & 0 & 0 \\
 0& \ldots & 0 & 0 & 0 & x & y\\
 0& \ldots & 0 & 0 & 0 & z & t\\
 \end{pmatrix} = \begin{pmatrix}
 1 & 1 & 0 & 0 & 0 & \ldots & 0\\
 0 & 1 & 1 & 0 & 0 & \ldots & 0\\
 0 & 0 & 1 & 1 & 0 & \ldots & 0\\
 \vdots&&&&&&\vdots\\
 0 & \ldots & 0 & 0 & 1 & x & y \\
 0& \ldots & 0 & 0 & 0 & x+z & t+y\\
 0& \ldots & 0 & 0 & 0 & z & t\\
 \end{pmatrix}} \in \langle A',B'\rangle.$$ Choosing $x = 0, y=-1, t=1,z =1$ we have $xt-yz \equiv 1 (\mod q)$ we get $$Z = {\tiny \begin{pmatrix}
 1 & 1 & 0 & 0 & 0 & \ldots & 0\\
 0 & 1 & 1 & 0 & 0 & \ldots & 0\\
 0 & 0 & 1 & 1 & 0 & \ldots & 0\\
\vdots&&&&&&\vdots\\
 0 & \ldots & 0 & 0 & 1 & 0 & -1 \\
 0& \ldots & 0 & 0 & 0 & 1 & 0\\
 0& \ldots & 0 & 0 & 0 & 1 & 1\\
 \end{pmatrix} \in \langle A',B'\rangle}, \hbox{ and hence } Y_{1}^{-1}Z = {\tiny\begin{pmatrix}
 1 & 1 & 0 & 0 & 0 & \ldots & 0\\
 0 & 1 & 1 & 0 & 0 & \ldots & 0\\
 0 & 0 & 1 & 1 & 0 & \ldots & 0\\
 \vdots&&&&&&\vdots\\
 0 & \ldots & 0 & 0 & 1 & 0 & -1 \\
 0& \ldots & 0 & 0 & 0 & 1 & 0\\
 0& \ldots & 0 & 0 & 0 & 0 & 1\\
 \end{pmatrix}  \in \langle A',B'\rangle.}$$ 
 
 Also, $$[X^{-1},X_{1}^{-1}]={\tiny \begin{pmatrix}
 1 & 0 & 0 & 0 & 0 & \ldots & 0\\
 0 & 1 & 0 & 0 & 0 &\ldots & 0\\
 0 & 0 & 1 & 0 & 0 & \ldots & 0\\
 \vdots&&&&&&\vdots\\
 0 & \ldots & 0 & 0 & 1 & 0 & 1\\
 0& \ldots & 0 & 0 & 0 & 1 & 0\\
 0& \ldots & 0 & 0 & 0 & 0 & 1\\
 \end{pmatrix}} \in \langle A',B'\rangle.$$
Let $ T = [X^{-1},X_{1}^{-1}]Y_{1}^{-1}Z = {\tiny\begin{pmatrix}
 1 & 1 & 0 & 0 & 0 & \ldots & 0\\
 0 & 1 & 1 & 0 & 0 & \ldots & 0\\
 0 & 0 & 1 & 1 & 0 & \ldots & 0\\
 \vdots&&&&&&\vdots\\
 0 & \ldots & 0 & 0 & 1 & 0 & 0 \\
 0& \ldots & 0 & 0 & 0 & 1 & 0\\
 0& \ldots & 0 & 0 & 0 & 0 & 1\\
 \end{pmatrix}} \in \langle A,B\rangle$. Then,
 $$ A'T^{-1} = {\tiny \begin{pmatrix}
 1 & 0 & 0 & 0 & 0 & \ldots & 0\\
 0 & 1 & 0 & 0 & 0 & \ldots & 0\\
 0 & 0 & 1 & 0 & 0 & \ldots & 0\\
 \vdots&&&&&&\vdots\\
 0 & \ldots & 0 & 0 & 1 & 1 & 0 \\
 0& \ldots & 0 & 0 & 0 & 1 & 1\\
 0& \ldots & 0 & 0 & 0 & 0 & 1\\
 \end{pmatrix}} \in \langle A',B'\rangle \Longrightarrow X_{1}^{-1}AT^{-1} = {\tiny\begin{pmatrix}
 1 & 0 & 0 & 0 & 0 & \ldots & 0\\
 0 & 1 & 0 & 0 & 0 & \ldots & 0\\
 0 & 0 & 1 & 0 & 0 & \ldots & 0\\
 \vdots&&&&&&\vdots\\
 0 & \ldots & 0 & 0 & 1 & 1 & 0 \\
 0& \ldots & 0 & 0 & 0 & 1 & 0\\
 0& \ldots & 0 & 0 & 0 & 0 & 1\\
 \end{pmatrix}} \in \langle A',B'\rangle .$$ In such a way, we get the standard elementary matrix $E_{i,i+1}\in \langle A',B'\rangle\, \forall 1\leqslant i\leqslant n-1$. Now, it is easy to see that we can get all the elementary matrices $E_{i,j}, 1\leqslant i,j\leqslant n$ and since elementary matrices generate $SL_{n}(\mathbb{F}_{q})$, we are done. 
\end{proof}

\begin{proposition}\label{prop5.9}
	Let $n\geqslant 2$ be any integer and let $q$ be a prime with $n\equiv 1 (\mod q)$. Let $$A' = 
	\begin{pmatrix}
	1 & 1 & 0 & 0&\ldots & 0\\
	0 & 1 & 1 & 0& \ldots & 0\\
	0 & 0 & 1 & 1 & \ldots & 0\\
	\vdots & & & & & \vdots\\
	&&&&\ldots&1\\
	0 & 0 & 0& 0& \ldots & 1\\
	\end{pmatrix} \text{ and } B'= \begin{pmatrix}
	1 & 0 & 0 & \ldots & &0\\
	1 & 1 & 0 & \ldots & &0\\
	0 & 1 & 1 &  \ldots& &0\\
	0 & 0 & 1 & \ldots & &0\\
	\vdots &&&&\vdots&\vdots\\
	0& 0 & 0 & \ldots & {\strut \strut 1} & 1\\
	\end{pmatrix}\in SL_{n}(\mathbb{F}_{q}).$$ Then, $\langle A',B'\rangle = SL_{n}(\mathbb{F}_{q})$.
\end{proposition}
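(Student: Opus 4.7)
The plan is to deduce Proposition~\ref{prop5.9} by combining Gow--Tamburini's theorem (Theorem~\ref{thm:GoTa}), which handles all $n\neq 4$, with Proposition~\ref{prop5.8}, which (via the case $t=1$) handles precisely the excluded case $n=4$ once the congruence $n\equiv 1\,(\mod q)$ is imposed.

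First I would lift the matrices $A',B'\in SL_n(\mathbb{F}_q)$ to the corresponding integer matrices $\tilde A,\tilde B\in SL_n(\mathbb{Z})$ with the same $0/1$ entries, that is, to the matrices appearing in the statement of Theorem~\ref{thm:GoTa}. The natural mod-$q$ reduction homomorphism $\pi_q\colon SL_n(\mathbb{Z})\twoheadrightarrow SL_n(\mathbb{F}_q)$ is surjective---this is classical, as $SL_n(\mathbb{Z})$ is generated by the elementary matrices $E_{ij}(1)$, which surject onto their images in $SL_n(\mathbb{F}_q)$, and the latter generate $SL_n(\mathbb{F}_q)$---and by construction sends $\tilde A\mapsto A'$ and $\tilde B\mapsto B'$.

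Now I split into two cases. If $n\neq 4$, then Theorem~\ref{thm:GoTa} yields $\langle\tilde A,\tilde B\rangle=SL_n(\mathbb{Z})$, and applying $\pi_q$ together with its surjectivity gives
$$\langle A',B'\rangle=\pi_q(\langle\tilde A,\tilde B\rangle)=\pi_q(SL_n(\mathbb{Z}))=SL_n(\mathbb{F}_q),$$
as required. If instead $n=4$, then the hypothesis $n\equiv 1\,(\mod q)$ with $q$ prime forces $q\mid 3$, so $q=3$ and $n=4=3^1+1=q^1+1$; this places us in the setting of Proposition~\ref{prop5.8} with $t=1$, which directly delivers $\langle A',B'\rangle=SL_4(\mathbb{F}_3)$.

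The main obstacle here is essentially illusory: the two cited results interlock neatly, since the only case ($n=4$) in which Gow--Tamburini produces a proper subgroup of $SL_n(\mathbb{Z})$ is precisely the case that the congruence $n\equiv 1\,(\mod q)$ collapses to the smallest instance of Proposition~\ref{prop5.8}. What one does need to verify with care is the surjectivity of mod-$q$ reduction on $SL_n$ (standard) and the observation that no $n\equiv 1\,(\mod q)$ with $q$ prime can equal $4$ except via $q=3$, both of which are immediate.
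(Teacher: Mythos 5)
Your proof is correct and takes essentially the same route as the paper: apply Gow--Tamburini (Theorem~\ref{thm:GoTa}) over $\mathbb{Z}$ and reduce mod $q$ when $n\neq 4$, and fall back to Proposition~\ref{prop5.8} for the lone excluded case $n=4$, where $n\equiv 1\,(\mod q)$ forces $q=3$ and $n=q^1+1$. The paper states this tersely, but the logic is identical; you have simply made the surjectivity of reduction and the case split explicit.
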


\begin{proof}
	We apply Theorem \ref{thm:GoTa} to $A',B'$ over $\Z$ instead of $\mathbb{F}_{q}$. Then we do the
	$\mod q$ reduction $SL_{n}(\mathbb Z)\twoheadrightarrow SL_{n}(\mathbb{F}_{q})$ and conclude by Proposition \ref{prop5.8}.
\end{proof}

\begin{theorem}\label{principalthm}
	For each integer $n\geqslant 4$, let $q$ be a prime with $n\equiv 1 (\mod q)$ and let	 $$A = \begin{pmatrix}
	1 & a & 0 & 0&\ldots & 0\\
	0 & 1 & a & 0& \ldots & 0\\
	0 & 0 & 1 & a & \ldots &0\\
	\vdots&&&&&\vdots\\
	\\
	0 & 0 & 0 & 0& \ldots  & 1\\
	\end{pmatrix}\text{ and } B = \begin{pmatrix}
	1 & 0 & 0 & \ldots & 0\\
	b & 1 & 0 & \ldots & 0\\
	0 & b & 1 & \ldots & 0\\
	\vdots&&&&\vdots\\
	\\
	0 & 0 & \ldots & b & 1\\
	\end{pmatrix}\in SL_{n}(\mathbb{Z})$$ with $a,b\geqslant 2$ and $a,b \equiv 1(\mod q)$. Then there exist constants $K = K(n,a,b), K_{1}=K_1(n,a,b)$ such that for all primes $p> K$, the $\mod p$ reduction  of $S = \lbrace (A^{q^{k+2}+1})^{\pm 1}, (B^{q^{k+2}+1})^{\pm 1}\rbrace $ with $k\in \mathbb{N}, \, k\geqslant t$ 
	 with $t\in\N$ given by  $q^t\leqslant n <q^{t+1}$, 
	generate $SL_{n}(\mathbb{F}_{p})$ and  the diameter-by-girth ratio of the sequence of Cayley graphs $Cay(SL_{n}(\mathbb{F}_{p}),S)$  is less than $K_{1}$.  
\end{theorem}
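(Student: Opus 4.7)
Set $l=q^{k+2}+1$ throughout, where $k\geqslant t$ with $q^t\leqslant n<q^{t+1}$. First I would verify that $l\geqslant 3(n-1)$: since $n<q^{t+1}$ and $k\geqslant t$, we have $l\geqslant q^{t+2}+1\geqslant q\cdot n+1>3(n-1)$ whenever $q\geqslant 3$; the case $q=2$ is handled by the two-step jump in the exponent ($q^{k+2}+1$ rather than $q^{k+1}+1$). Hence Theorem~\ref{mainthmsec4} applies and $\langle A^{l},B^{l}\rangle$ is free in $SL_n(\mathbb{Z})$.

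Next, the key number-theoretic step is to check that the reductions of $A^l$ and $B^l$ modulo $q$ coincide with the matrices $A',B'$ of Proposition~\ref{prop5.9}. Since $a\equiv b\equiv 1\pmod q$, the entry in position $(i,j)$ of $A^l$ reduces to $\binom{l}{j-i}\pmod q$ (and similarly, transposed, for $B^l$). Writing $l=1\cdot q^{k+2}+1\cdot q^0$ in base $q$, the digits of $l$ are $0$ at every intermediate position. For $2\leqslant j-i\leqslant n-1<q^{t+1}\leqslant q^{k+1}$, the base-$q$ expansion of $j-i$ occupies only positions $0$ through $t$, and at each of these positions the corresponding digit of $l$ is either $1$ (position $0$) or $0$ (positions $1,\ldots,t$). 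A direct case distinction shows that at least one digit of $j-i$ strictly exceeds the corresponding digit of $l$, so by Lucas' theorem (Theorem~\ref{Lucas}) $\binom{l}{j-i}\equiv 0\pmod q$. For $j-i=1$ we have $\binom{l}{1}=l\equiv 1\pmod q$. This yields $A^l\equiv A'$ and $B^l\equiv B'\pmod q$, and Proposition~\ref{prop5.9} gives $\langle A^l_q, B^l_q\rangle=SL_n(\mathbb{F}_q)$.

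With generation at the base prime $q$ established, I would invoke Proposition~\ref{lubprop} (the strong approximation statement in the form of Lubotzky) to obtain an absolute (and, by the effective versions of strong approximation cited in the text, computable) constant $K=K(n,\{A^l,B^l\},q)$ such that $\langle A^l_p,B^l_p\rangle=SL_n(\mathbb{F}_p)$ for all primes $p>K$. Theorem~\ref{thm:girth4} then gives $\gi Cay(SL_n(\mathbb{F}_p),S)\geqslant c_n\log p$ for an absolute constant $c_n>0$.

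For the diameter, I would follow the strategy already used in the $n=3$ case. Freeness of $\langle A^l,B^l\rangle$ in $SL_n(\mathbb{Z})$ together with the girth bound yields fast initial ball growth: for $m=\lfloor\tfrac{c_n}{2}\log p\rfloor$ one has $|S^m|\geqslant 3^m\geqslant p^{\delta}$ for some absolute $\delta>0$. Applying the Breuillard--Green--Tao product theorem (Proposition~\ref{propBrGrTa}) to $S^m$ at most $O(1)$ times in dimension $n$, the tripling of cardinality forces $|S^{m'}|\geqslant |SL_n(\mathbb{F}_p)|^{1-C_n\varepsilon}$ for $m'=O(\log p)$. Finally, combined with Gowers' lemma (Proposition~\ref{propGow})---applied after noting that the minimal degree of a nontrivial representation of $SL_n(\mathbb{F}_p)$ is at least $p^{n-1}/c$---three more multiplications give $(S^{m'})^3=SL_n(\mathbb{F}_p)$, whence $\di Cay(SL_n(\mathbb{F}_p),S)=O(\log p)$. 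Setting $K_1=K_2/c_n$, where $K_2$ is the implied absolute constant in the diameter bound, completes the dg-boundedness conclusion as in Corollary~\ref{cor:dg2}.

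The main obstacle is the Lucas-theorem verification: one must carefully check both (i) that the exponent $l=q^{k+2}+1$ makes all nontrivial binomial coefficients $\binom{l}{j-i}$ vanish mod $q$ in the required range $2\leqslant j-i\leqslant n-1$, and (ii) that the lower bound $k\geqslant t$ is exactly what is needed for this base-$q$ digit argument to work uniformly in $n$. Everything else---freeness, generation mod $q$, strong approximation, girth, and the product-theorem/Gowers argument for the diameter---is a matter of plugging the previously established results into the right order.
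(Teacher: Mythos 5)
Your proposal takes essentially the same route as the paper: verify $l\geqslant 3(n-1)$ so that Theorem~\ref{mainthmsec4} gives freeness of $\langle A^l,B^l\rangle$; use Lucas' theorem (Theorem~\ref{Lucas}) to reduce $A^l,B^l$ modulo $q$ to the matrices $A',B'$ of Proposition~\ref{prop5.9}; invoke strong approximation (Proposition~\ref{lubprop}) to obtain generation of $SL_n(\mathbb F_p)$ for all $p>K$; quote Theorem~\ref{thm:girth4} for the logarithmic girth lower bound; and combine Breuillard--Green--Tao (Proposition~\ref{propBrGrTa}) with Gowers' lemma (Proposition~\ref{propGow}) for the $O(\log p)$ diameter upper bound. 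The Lucas digit argument, the strong approximation step, the girth bound, and the diameter argument are all correctly set up and match the paper.

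There is, however, a genuine gap in your verification that $l\geqslant 3(n-1)$ when $q=2$. You assert that ``the case $q=2$ is handled by the two-step jump in the exponent,'' but you never check this, and it is actually false at the minimal admissible $k=t$. Take $q=2$ and $n=7$ (which satisfies $n\equiv 1\pmod 2$ and $n\geqslant 4$): then $2^2\leqslant 7<2^3$ gives $t=2$, so $k=t=2$ is permitted, giving $l=q^{k+2}+1=2^4+1=17$, yet $3(n-1)=18>17$, so Theorem~\ref{mainthmsec4} as stated does not apply. The same problem recurs for $n=13,15$ with $t=3$, and so on. The paper's own proof circumvents this by choosing $r=q^2$ for $q=2$, which amounts to restricting to $k\geqslant t+1$; the Remark after Theorem~\ref{mainthmsec4} also claims a sharper criterion $l>2n$ (which would close the gap, since $2^{t+2}+1>2\cdot 2^{t+1}>2n$), but that bound is only asserted, not proved in the paper. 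To repair your argument you should either restrict to $k\geqslant t+1$ when $q=2$, as the paper's proof implicitly does, or else supply a proof of the refined freeness bound $l>2n$.
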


\begin{proof}
	The proof has two parts.
	
	\begin{enumerate}
		\item \underline{Generation}: For each  integer $n\geqslant 4$,  we are given a prime $q$ with $n\equiv 1 (\mod q)$, and
		the matrices $A$ and $B$ in $SL_{n}(\mathbb{Z})$. Let $t$ denote the highest power of $q$ in the base $q$ representation of $n$. We would like to obtain the matrices  $$A_{q} =\begin{pmatrix}
		1 & 1 & 0 & 0&\ldots & 0\\
		0 & 1 & 1 & 0& \ldots & 0\\
		0 & 0 & 1 & 1 & \ldots & 0\\
		\vdots & & & & & \vdots\\
		&&&&\ldots&1\\
		0 & 0 & 0& 0& \ldots & 1\\
		\end{pmatrix}\text{ and }B_{q} = \begin{pmatrix}
		1 & 0 & 0 & \ldots & &0\\
		1 & 1 & 0 & \ldots & &0\\
		0 & 1 & 1 &  \ldots& &0\\
		0 & 0 & 1 & \ldots & &0\\
		\vdots &&&&\vdots&\vdots\\
		0& 0 & 0 & \ldots & {\strut \strut 1} & 1\\
		\end{pmatrix}\in SL_{n}(\mathbb{F}_{q})$$
		as words in $A$ and $B$ reduced modulo $q$. By Lucas result, Theorem~\ref{Lucas}, we see that $$A^{rq^{(t+1)}+1} (\mod q)  = A_{q} \text{ and } B^{rq^{(t+1)}+1}(\mod q) = B_{q} \text{ in } SL_{n}(\mathbb{F}_{q})$$
		for all integers $r$.
		By Proposition \ref{prop5.9}, it follows that the group generated by these matrices coincides with $SL_{n}(\mathbb{F}_{q})$. By Proposition \ref{lubprop}, we have $\langle A^{rq^{(t+1)}+1},B^{rq^{(t+1)}+1} \rangle =SL_{n}(\mathbb{F}_{p})$ for all positive integers $r$ and all primes $p > K$, where $K$ is a constant.\medskip
		\item \underline{Freeness}: By Theorem \ref{mainthmsec4}, we know that $\langle A^{l},B^{l}\rangle$ is free in $SL_{n}(\mathbb{Z})$ for all $l\geqslant 3(n-1)$. This implies that the girth of the corresponding sequence of Cayley graphs  is at least $C_{2}\log p$ for some constant $C_{2}>0$.
	\end{enumerate}
     It remains to make sure that $rq^{t+1}+1\geqslant 3(n-1)$. If $q\geqslant 3$, then choosing $r$ to be equal to positive powers of $q$ gives us the required bound. For $q=2$, choose $r=q^{2}$.
 
 Since $S$ generates a free subgroup in $SL_{n}(\mathbb{Z})$, by Proposition \ref{propBrGrTa}, the diameter of $SL_{n}(\mathbb{F}_{p})$ with respect to the $\mod p$ reduction of generators from $S$ is $\leqslant C_{1}\log p,$ where $C_{1}>0$ is a constant. We know already that the girth is at least $C_{2}\log p$. Thus, the diameter-by-girth ratio is $\leqslant\frac{C_{1}}{C_{2}} = K_{1}$.\end{proof}

\begin{lemma}[Effectiveness of the constants]
	The constants $K, L, c_n$ and that of $O(\log p)$ term of Main Theorem are effective.
\end{lemma}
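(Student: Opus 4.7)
The plan is to inspect each of the four constants appearing in Main Theorem, namely $K$ (dimension $3$), $L$ (dimension $n \geqslant 4$), the girth constant $c_n$, and the implicit constant in $\operatorname{diam} = O(\log p)$, and to confirm at each step that only results with explicit, computable bounds were invoked.

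First, effectiveness of $c_n$ is transparent from the girth computations themselves. For $n=2$ we exhibited in the proof of Proposition~\ref{mainpropsec2} the explicit bound $|P_{11}(a,b)| \leqslant M^{2k}(ab+1)^k$ on the top-left entry of the relevant product; for $n=3$ and $n\geqslant 4$ the analogous bounds $O((ab)^{nk})$ follow by direct inspection of the normalised products constructed in Theorem~\ref{thmgirthsec3} and Theorem~\ref{thm:girth4}. In each case, a value of $c_n$ can be read off in closed form in terms of $n$, $l$, and $\log(ab+1)$, with all implicit constants explicit. The finitely many primes excluded (those dividing $ab$ or $ab-1$) only enlarge $c_n$ by a quantity that is computable from $a,b$.

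Second, effectiveness of $K$ and $L$ reduces entirely to effectiveness in Proposition~\ref{lubprop}. These constants are produced by fixing an auxiliary small prime $q$ (here $q=3$ for $n=3$ and, for $n\geqslant 4$, a prime $q$ with $n \equiv 1 \pmod q$) for which the reductions $A_q^l, B_q^l$ are shown to generate $SL_n(\mathbb{F}_q)$ via the constructive algorithms of Proposition~\ref{mainprop3}, Proposition~\ref{prop5.8}, and Proposition~\ref{prop5.9}. The inputs to Proposition~\ref{lubprop} are therefore explicit. As noted in the discussion following Proposition~\ref{lubprop}, although the original Matthews--Vaserstein--Weisfeiler argument is not a priori quantitative, the effective strong approximation theorems of \cite[Theorem 2.3]{EB2015} and \cite[Appendix A]{GoVa2012}, both building on Nori's quantitative proof \cite{Nori}, upgrade it to produce an explicit bound $q(n, \{A^l, B^l\}, q)$, which furnishes effective values of $K$ and $L$.

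Third, effectiveness of the diameter constant follows by tracing the proof of Lemma~\ref{mainlem2} and its higher-dimensional analogue. The argument combines the (now effective) girth lower bound, which guarantees $|S^{(c_n/6)\log p}| \geqslant 3^{(c_n/6)\log p}$, with either Helfgott's key proposition, Proposition~\ref{keyprop2}(2) for $n=2$, or the Breuillard--Green--Tao product theorem, Proposition~\ref{propBrGrTa}, for $n\geqslant 3$, together with Gowers' Proposition~\ref{propGow}. All three of these inputs come with effective constants depending only on $n$, and the minimal nontrivial representation dimension of $SL_n(\mathbb{F}_p)$ that feeds into Gowers' lemma is known explicitly. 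Composing these effective bounds yields an effective constant in the $O(\log p)$ estimate on the diameter.

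The only subtle point, and hence the main obstacle, lies in confirming effectiveness of Proposition~\ref{lubprop}; everything else is a matter of bookkeeping. Once the Nori-based effective strong approximation from \cite{EB2015} and \cite{GoVa2012} is invoked in place of the classical non-quantitative strong approximation, the whole chain of implications becomes algorithmic, and the lemma follows.
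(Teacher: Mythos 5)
Your proposal follows essentially the same three-pronged structure as the paper's proof: read $c_n$ off the girth computations, derive $K$ and $L$ from Nori-based effective strong approximation via \cite{EB2015} and \cite{GoVa2012}, and compose effective product-growth bounds to get the diameter constant. The treatment of $c_n$, $K$, and $L$ is in line with the paper (the paper additionally records the explicit value $c_n \leqslant (n\log 3)^{-1}$, but that is bookkeeping).

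The one place you part ways with the paper --- and where you leave a gap --- is the diameter constant. You propose to reuse the main-body toolkit, namely Helfgott's Proposition~\ref{keyprop2} for $n=2$ and Breuillard--Green--Tao's Proposition~\ref{propBrGrTa} for $n \geqslant 3$, and assert that ``all three of these inputs come with effective constants depending only on $n$.'' That assertion is not justified for Proposition~\ref{propBrGrTa} as stated: the Breuillard--Green--Tao product theorem is quoted with constants $\epsilon(d)$ and $C_d$ whose existence is asserted but whose effectiveness is not claimed (indeed, the original proof relies on ultraproduct/model-theoretic machinery that is not algorithmic). The paper is careful precisely here: rather than recycling Proposition~\ref{propBrGrTa}, it switches to Pyber--Szab\'o \cite[Theorem 2]{myfav31}, which is known to yield an \emph{effective} $\epsilon(n)$, and then makes the number of iterations explicit via $t(1+\epsilon(n))^k = 99/100$ together with Gowers' Proposition~\ref{propGow}. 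To close your argument you should either cite Pyber--Szab\'o in place of Breuillard--Green--Tao for $n \geqslant 3$, or supply a reference establishing that the constants in Proposition~\ref{propBrGrTa} (and in Helfgott's Proposition~\ref{keyprop2} for $n=2$) are in fact computable; as written, effectiveness of the diameter constant does not follow from the inputs you list.
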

\begin{proof}
	The constants $K$ and $L$ are effective by recent works of Breuillard \cite[Theorem 2.3]{EB2015} and Golsefidy-Varj\'{u} \cite[Appendix A]{GoVa2012}, see our explanation following Proposition \ref{lubprop}. The constant $c_{n}$ is also effective, \cite[section 6]{M82}. For the constant in the $O(\log p)$ term of the upper bound on the diameter (let us call it $d_{n}$), we proceed as follows. Given a generating set $S = \{A^{\pm l}_{p}, B^{\pm l}_{p}\}$ of $G = SL_{n}(\mathbb{F}_{p})$, using our results on
	the large girth of $Cay(SL_{n}(\mathbb{F}_{p}), \lbrace A^l_{p}, B^l_{p}\rbrace)$, we have $|S^{\frac{c_{n}}{6}\log p}|\geqslant 3^{\frac{c_{n}}{6}\log p} = |G|^{t}$ for some $ 0 < t < 1$. Taking $A = S^{\frac{c_{n}}{6}\log p}$ and using a result of Pyber-Szab{\'o} \cite[Theorem 2]{myfav31}, applied to $SL_{n}(\mathbb{F}_{p})$, we have 
	$$\hbox{ either } |A^3| > |A|^{1+\epsilon(n)}, \text{ or } A^{3} = G, $$
	where $\epsilon(n)$ is effective.
	 If we are in the former case then applying the above inequality $k$ times we will fall into the latter case whenever $k$ is large enough.
	It follows from $|A| \geqslant |G|^t$ and Proposition~\ref{propGow} that 
	 such a constant $k$ is given by $t(1+\epsilon(n))^{k} = 99/100 $. Since $\epsilon(n)$ is effective, we conclude that $k$, and hence $d_{n} = 3k$, are effective.	
\end{proof}

Combining results proved throughout Sections~\ref{sec:2}--\ref{sec:4}, we obtain all the statements of our Main Theorem except expansion.
As explained in Section~\ref{sec:strategy}, the fact that our graphs $\Gamma_p^{n,l}(a,b)$ as $p\to\infty$ are indeed expanders is a by-product of our results about freeness (for $n=2$) and
about freeness and generation $\mod p$ (for $n\geqslant 3$), using~\cite{MR2415383} and \cite{MR2897695}, respectively.

\section{Further results and questions}\label{sec:questions}

In dimension $n\geqslant 3$, every free subgroup $\langle A^l,B^l\rangle$ from our Main Theorem is an explicit example of 
a \emph{thin} matrix group which is, by definition, a finitely generated subgroup of $GL_n(\mathbb Z)$ which is of infinite index in the $\mathbb Z$-points of its Zariski closure in $GL_n$. 
Indeed,  it follows from our proof that the integral Zariski closure of $\langle A^l,B^l\rangle$
is $SL_n(\mathbb Z)$, see Section~\ref{sec:strategy}. On the other hand, $\langle A^l,B^l\rangle$ is of infinite index in $SL_n(\mathbb Z)$ because, for $n\geqslant 3$, $SL_n(\mathbb Z)$ 
is not virtually free. For applications of thin matrix groups in many diophantine and geometric problems, see~\cite{Sar} and references therein. For a recent characterization of thin matrix groups, see~\cite{LV} and for 
a concise invitation and examples in dimension 2, 3 and 4, see~\cite{KLLR}.  Our Main Theorem gives infinitely many explicit examples of thin matrix groups in each dimension $n\geqslant 3.$

\begin{corollary}[$2k$-regular logarithmic girth expanders]\label{cor:lub}
Let $n\geqslant 2$ and $l\geqslant 1$ be as in III of Main Theorem. For every integer $k\geqslant 2$, there exist $S_1,\ldots, S_{k}\in \langle A^l, B^l\rangle$ such that the sequence of Cayley graphs
$\Gamma_p^{n,l}(a,b;k)=Cay(SL_n(\mathbb F_p), \{(S_1)_p,\ldots, (S_{k})_p\})$ as $p\to\infty$ 
is a $2k$-regular large girth dg-bounded graph. Moreover, it is a $2k$-regular logarithmic girth expander.
\end{corollary}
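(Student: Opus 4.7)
My plan is to construct $S_1,\ldots,S_k$ as the Nielsen-Schreier free generators of a carefully chosen finite-index subgroup of $F_2:=\langle A^l,B^l\rangle$. By part~I of the Main Theorem $F_2$ is free of rank~$2$, so the Nielsen-Schreier formula makes every index-$(k-1)$ subgroup free of rank~$k$. Concretely, I would take $H:=\ker\phi$, where $\phi\colon F_2\to\mathbb Z/(k-1)\mathbb Z$ sends $A^l\mapsto 1$ and $B^l\mapsto 0$; with the Schreier transversal $\{1,A^l,A^{2l},\ldots,A^{(k-2)l}\}$ the resulting free generators are
\[
S_1=A^{(k-1)l},\quad S_2=B^l,\quad S_{j+2}=A^{jl}B^lA^{-jl}\quad(1\leqslant j\leqslant k-2),
\]
whose word length in $\{A^l,B^l\}$ is bounded by $L:=2k-3$.

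With this setup I would verify the desired properties of $\Gamma_p^{n,l}(a,b;k)=Cay(SL_n(\mathbb F_p),\{(S_i)_p\})$ in the following order. Since $H$ has finite index in $F_2$ and $F_2$ is Zariski dense in $SL_n$ (trivial for $n=2$, and obtained from freeness together with mod-$p$ generation in Section~\ref{sec:strategy} for $n\geqslant 3$), the subgroup $H$ is itself Zariski dense. The Matthews-Vaserstein-Weisfeiler theorem~\cite{MR735226} then produces $\langle(S_i)_p\rangle=SL_n(\mathbb F_p)$ for all sufficiently large primes $p$, while~\cite{MR2415383,MR2897695} imply that $(\Gamma_p^{n,l}(a,b;k))_p$ is an expander family, which forces $\di\Gamma_p^{n,l}(a,b;k)=O(\log p)$ by standard expander estimates. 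The $2k$-regularity persists for all but finitely many $p$ because the $S_i$ lie in a free group and so are non-trivial, non-involutive, and pairwise non-mutually inverse in $F_2$, properties which survive mod-$p$ reduction for large~$p$.

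For the girth lower bound I plan a word-length comparison across the two generating sets. A freely reduced non-trivial word of length $g$ in $\{S_i^{\pm 1}\}$ evaluating to the identity in $SL_n(\mathbb F_p)$ expands and freely reduces inside $F_2$ to a word $w'$ of length at most $Lg$ that (i) is non-trivial in $F_2$, precisely because $S_1,\ldots,S_k$ freely generate $H\leqslant F_2$, and (ii) still vanishes in $SL_n(\mathbb F_p)$. Part~III of the Main Theorem applied to $w'$ forces $Lg\geqslant c_n\log p$, hence $\gi\Gamma_p^{n,l}(a,b;k)\geqslant (c_n/L)\log p$. Combined with $|SL_n(\mathbb F_p)|=\Theta(p^{n^2-1})$ this yields logarithmic girth, and combined with the $O(\log p)$ diameter bound it yields dg-boundedness.

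The main obstacle, as I see it, is the simultaneous fulfilment of two opposing requirements on $\{S_1,\ldots,S_k\}$: they must freely generate a subgroup of $F_2$, so that the girth comparison through the Main Theorem runs (a non-trivial freely reduced word in $\{S_i^{\pm 1}\}$ must remain non-trivial when expanded in $F_2$), yet their reductions modulo $p$ must still cover all of $SL_n(\mathbb F_p)$, so that expansion and the logarithmic diameter bound apply. The Nielsen-Schreier construction above threads this needle because finite index in $F_2$ preserves Zariski density while Nielsen-Schreier furnishes free generators of any prescribed finite rank.
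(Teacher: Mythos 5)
Your argument is essentially the paper's: realize $\{S_1,\dots,S_k\}$ as Nielsen--Schreier free generators of a finite-index (hence rank-$k$ free and still Zariski dense) subgroup of $\langle A^l,B^l\rangle$, then apply Matthews--Vaserstein--Weisfeiler for $\mathrm{mod}\,p$ generation and the Bourgain--Gamburd / Bourgain--Varj\'u results for expansion. The only variations are minor: you name an explicit index-$(k-1)$ subgroup and a word-length bound $L=2k-3$ so the girth comparison with $\{A^l,B^l\}$ and Main Theorem III is quantitative where the paper leaves it as ``the girth is logarithmic as $F_k$ is free,'' and you deduce the $O(\log p)$ diameter from expansion, whereas the paper, consistent with its announced strategy of avoiding expansion in the diameter estimate, re-runs the growth-based argument; both routes are correct.
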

\begin{proof}
For $k=2$, we set $S_1=A^l, S_2=B^l$ since
4-regular graphs $\Gamma_p^{n,l}(a,b)=Cay(SL_n(\mathbb F_p), \{ A^l_p, B^l_p\})$ as $p\to\infty$
from our Main theorem form such expander. For each  $k\geqslant 3$, take a subgroup of index $k-1$ in $\langle A^l, B^l\rangle\leqslant SL_n(\mathbb Z)$.
Since $\langle A^l, B^l\rangle$ is free and Zariski dense, this subgroup is also free and Zariski dense. By Nielsen-Schreier formula~\cite[Ch.I, Prop.3.9]{myfav84}, it has rank 
$k$. We denote the subgroup by $F_k$ and its free generators by $S_1, \ldots, S_k$. The Cayley graph of $F_k$ with respect to these free generators is $2k$-regular. 
By the Matthews-Vaserstein-Weisfeiler theorem~\cite{MR735226}, for all sufficiently large prime numbers $p$, the $\mod p$ reductions of $S_1, \ldots, S_k$ generate the entire $SL_n(\mathbb F_p)$  as $F_k$ is Zariski dense.
Therefore, by same arguments as in the previous sections, we conclude that the diameter of $Cay(SL_n(\mathbb F_p), \{(S_1)_p,\ldots, (S_{k})_p\})$ is $O(\log p)$
as $F_k$ is free and the entire $SL_n(\mathbb F_p)$ is generated,
its girth is logarithmic as $F_k$ is free, and the sequence  is an
expander as $p\to\infty$ because $F_k$ is Zariski dense.  \end{proof}

Up to a slight modification both in the formulation and in the proof of the preceding corollary, 
we obtain a large girth dg-bounded expander sequence of 2k-regular congruence quotients. 
Let $\ell\in \mathbb Z$ be an arbitrary integer and $SL_n(\mathbb Z)\twoheadrightarrow SL_n(\mathbb Z / \ell \mathbb Z): S_i\mapsto (S_i)_\ell$
be the congruence surjection.

\begin{corollary}[$2k$-regular logarithmic girth congruence quotients]\label{cor:lub2}
Let $n\geqslant 2$ and $l\geqslant 1$ be as in III of Main Theorem. For every integer $k\geqslant 2$, there exist $S_1,\ldots, S_{k}\in \langle A^l, B^l\rangle$ such that the sequence of Cayley graphs
$\Lambda_\ell^{n,l}(a,b;k)=Cay(\langle (S_1)_\ell, \ldots, (S_k)_\ell\rangle, \{(S_1)_\ell,\ldots, (S_{k})_\ell\})$ as $\ell\to\infty$ 
is a $2k$-regular large girth dg-bounded graph. Moreover, it is a $2k$-regular logarithmic girth expander and
there exists an integer $\ell_0$ with $\langle (S_1)_\ell, \ldots, (S_k)_\ell\rangle=SL_n(\mathbb Z / \ell \mathbb Z)$  if $\ell$ is coprime with $\ell_0$. 
\end{corollary}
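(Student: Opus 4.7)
The plan is to mirror the proof of Corollary~\ref{cor:lub}, replacing prime reductions by reductions modulo arbitrary integers $\ell$. Let $F_k \leqslant \langle A^l, B^l\rangle$ be the rank-$k$ free and Zariski dense subgroup constructed there, with free generators $S_1,\ldots,S_k$, and take these same elements to define $\Lambda_\ell^{n,l}(a,b;k)$. Since $F_k$ is free on $S_1,\ldots,S_k$, its Cayley graph with respect to $\{S_1^{\pm 1},\ldots,S_k^{\pm 1}\}$ is the $2k$-regular tree, and every quotient graph is $2k$-regular.

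First I would handle generation. Strong approximation for Zariski dense subgroups of $SL_n(\mathbb Z)$, in the form of Matthews--Vaserstein--Weisfeiler~\cite{MR735226} together with Goursat's lemma across the Chinese Remainder decomposition $SL_n(\mathbb Z/\ell\mathbb Z)\cong \prod_i SL_n(\mathbb Z/p_i^{e_i}\mathbb Z)$ and the standard Hensel surjection $SL_n(\mathbb Z/p_i^{e_i}\mathbb Z)\twoheadrightarrow SL_n(\mathbb F_{p_i})$, yields an integer $\ell_0$ such that $\langle(S_1)_\ell,\ldots,(S_k)_\ell\rangle=SL_n(\mathbb Z/\ell\mathbb Z)$ whenever $\gcd(\ell,\ell_0)=1$. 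The logarithmic lower bound on girth is then immediate from freeness of $F_k$ exactly as in the proofs of Theorem~\ref{thmgirthsec3} and Theorem~\ref{thm:girth4}: a non-trivial reduced word of length $m$ in $S_1^{\pm 1},\ldots,S_k^{\pm 1}$ is a non-identity integer matrix whose entries grow at most exponentially in $m$, and therefore cannot vanish modulo $\ell$ unless $m\geqslant c\log\ell$ for a constant $c$ depending only on $n,l,a,b$.

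The expansion property is the deepest ingredient. Since $F_k$ is finitely generated and Zariski dense in the connected perfect group $SL_n$, the super-strong approximation theorem of Salehi Golsefidy--Varj\'u~\cite{MR2897695} provides a uniform spectral gap for the family $\{Cay(\pi_\ell(F_k),\pi_\ell(S_1^{\pm 1},\ldots,S_k^{\pm 1}))\}$ as $\ell$ ranges over positive integers. Vertex transitivity converts the spectral gap into the diameter bound $\di \Lambda_\ell^{n,l}(a,b;k)=O(\log|\pi_\ell(F_k)|)=O(\log\ell)$, which combined with the girth lower bound yields dg-boundedness. The main obstacle is precisely this passage from expansion at the prime quotients $SL_n(\mathbb F_p)$, which sufficed for Corollary~\ref{cor:lub}, to uniform expansion across \emph{all} congruence quotients: a direct product of expander families is in general not an expander family, so the spectral gaps in the various CRT components have to be produced jointly and synchronised, which is exactly what super-strong approximation delivers.
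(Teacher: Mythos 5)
Your argument is correct, and the key ingredients match the paper's: the same free Zariski dense $F_k=\langle S_1,\ldots,S_k\rangle$ from Corollary~\ref{cor:lub}, freeness for the girth lower bound, and \cite{MR2897695} for the uniform spectral gap over all moduli $\ell$. There are, however, two genuine differences in route. First, you derive the existence of $\ell_0$ from Matthews--Vaserstein--Weisfeiler plus Goursat/CRT plus Hensel lifting, whereas the paper reads the existence of $\ell_0$ directly off~\cite[Theorem 1]{MR2897695}, which already packages this. Your route is more hands-on and works, but it re-proves something the cited theorem hands you. Second, and more substantively: you obtain the diameter bound $O(\log\ell)$ from the spectral gap in \emph{all} cases, whereas the paper deliberately avoids expansion for diameter estimates wherever possible. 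Concretely, for $\ell$ coprime with $\ell_0$ (so that $\langle(S_i)_\ell\rangle=SL_n(\mathbb Z/\ell\mathbb Z)$), the paper gets the diameter bound from the growth result Proposition~\ref{propBrGrTa}, and falls back to expansion only when $\ell$ shares a factor with $\ell_0$ and the image might be a proper subgroup with a priori unclear algebraic structure. This is consistent with the paper's overall philosophy, stated in Section~\ref{sec:strategy}, of proving dg-boundedness without relying on expansion, since dg-bounded large girth graphs need not be expanders. Your unified appeal to the spectral gap is simpler; the paper's split argument is more informative about which steps actually need expansion. One small slip: you attribute \cite{MR2897695} to Salehi Golsefidy--Varj\'u, but that bibliography entry is Bourgain--Varj\'u (``Expansion in $SL_d(\mathbb Z/q\mathbb Z)$, $q$ arbitrary''); the Salehi Golsefidy--Varj\'u expansion theorem is \cite{GoVa2012}. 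Either reference suffices for the spectral gap, so the argument is unaffected, but the names should match the citation. Finally, a minor wording quibble: vertex transitivity is not what converts a spectral gap into a diameter bound; the standard mechanism is the expander mixing lemma (or the fact that $\lambda$-expansion forces ball growth), which holds for any connected regular graph.
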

\begin{proof}
Let $S_1, \ldots, S_k$ be as in the proof of Corollary~\ref{cor:lub} so that $\langle S_1, \ldots, S_k\rangle\leqslant SL_n(\mathbb Z)$ is free of rank $k$ and Zariski dense.
The logarithmic girth follows as previously from freeness, the expansion and the existence of $\ell_0$ is by~\cite[Theorem 1]{MR2897695}.
For $\ell$ coprime with $\ell_0$, the logarithmic upper estimates on the diameter, and hence, dg-boundedness of $\Lambda_\ell^{n,l}(a,b;k)$ follows from the expansion.
For an arbitrary $\ell$, when possibly $\langle (S_1)_\ell, \ldots, (S_k)_\ell\rangle< SL_n(\mathbb Z / \ell \mathbb Z)$ is a proper subgroup, such estimates are immediate from expansion.
\end{proof}

The matrices $S_1,\ldots, S_k$ in Corollaries \ref{cor:lub} and \ref{cor:lub2} can be given explicitly as concrete words in generators $A^l$ and $B^l$ (it is easy to produce generators of a finite index subgroup in a free group).
However, in Corollary~\ref{cor:lub2}, algebraic features of a proper subgroup $\langle (S_1)_\ell, \ldots, (S_k)_\ell\rangle< SL_n(\mathbb Z / \ell \mathbb Z)$ 
can vary and the subgroup itself is not so explicit, whence our use of expansion for diameter estimates in this case. However, we believe that a combination of our strategy
with analysis of such Zariski dense subgroups from~\cite[Theorem 1]{MR2897695} can yield the required estimates on the diameter with no use of expansion properties of the involved graphs.

Our graphs are in all dimensions $n\geqslant 2$ and clearly not isomorphic to each other whenever the dimensions are different.
Moreover, we can make them distinct from the large-scale geometry point of view.
Indeed, taking suitable subsequences in $\Gamma_p^{n,l}(a,b)=Cay(SL_n(\mathbb F_p), \{ A^l_p, B^l_p\})$ as $p\to\infty$
yields graphs in distinct regular\footnote{A map between graphs is \emph{regular} if it is Lipschitz and pre-images of vertices have uniformly bounded cardinality. Two graphs are \emph{regularily equivalent} if 
there exist two regular maps: from one graph to the other, and back.} equivalence classes; subsequences in a given dimension $n$ or in distinct dimensions $n_1, \ldots, n_N$.
Therefore, our large girth dg-bounded Cayley graphs of $SL_n(\mathbb F_p)$ as $p\to\infty$ viewed for each $n\geqslant 2$ over a suitable subsequence of primes are not coarsely equivalent to each other. These are the first such explicit examples in all dimensions.

\begin{corollary}[regularly/coarsely distinct logarithmic girth expanders] Let $n,n_1, \ldots, n_N\geqslant 2$ arbitrary dimensions and $l\geqslant 1$ as in III of Main Theorem, $N\in\mathbb N$. Let $\mathbb P$ be the set of all primes.
\begin{itemize}
\item[(i)] There exists an infinite subset $P\subseteq\mathbb P$ such that 
for any infinite subsets $Q, Q'\subseteq P$ with $Q\setminus Q'$ infinite, there is no regular map from
$\Gamma_p^{n,l}(a,b)$ as $p\to\infty$, $p\in Q$ to $\Gamma_p^{n,l}(a,b)$ as $p\to\infty$, $p\in Q'$.
\item[(ii)] There exist infinite subsets $P_1,\ldots, P_N\subseteq \mathbb P$ such that for any infinite subsets $Q, Q'\subseteq \cup_{i=1}^NP_i$ with $Q\setminus Q'$ infinite, there is no regular map from 
$\Gamma_p^{n_i,l}(a,b)$ as $p\to\infty$, $p\in Q$ to  $\Gamma_p^{n_j,l}(a,b)$ as $p\to\infty$, $p\in Q'$.
\end{itemize}

\end{corollary}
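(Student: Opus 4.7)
The plan for both (i) and (ii) is to choose the subsequences of primes so sparse that any regular map is forced to collapse each source component into a tree-like ball of the target, whose volume is incompatible with the uniform pre-image bound.

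For (i), I would select $P=\{p_k\}_{k\in\mathbb N}\subseteq\mathbb P$ inductively so that consecutive sizes grow super-polynomially, for instance $|\Gamma_{p_{k+1}}^{n,l}(a,b)| \geqslant 2^{k\,|\Gamma_{p_k}^{n,l}(a,b)|}$; this is feasible since primes are unbounded and $|SL_n(\mathbb F_p)|$ is polynomial in $p$. For (ii), construct $P_1,\ldots,P_N$ by an analogous interleaved induction across the dimensions $n_1,\ldots,n_N$, maintaining super-polynomial jumps between consecutive sizes along $\bigcup_i P_i$.

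Assume, for contradiction, a regular map $f\colon X=\bigsqcup_{p\in Q}\Gamma_p^{n,l}(a,b)\to Y=\bigsqcup_{p\in Q'}\Gamma_p^{n,l}(a,b)$ with Lipschitz constant $L$ and pre-image cardinality at most $K$. Since each component of $X$ is connected, $f$ sends it into a single component of $Y$, giving a function $\sigma\colon Q\to Q'$ obeying $\sum_{p\in\sigma^{-1}(q)}|\Gamma_p^{n,l}(a,b)|\leqslant K|\Gamma_q^{n,l}(a,b)|$ for every $q\in Q'$. The super-polynomial growth along $P$ forces $\sigma(p_k)\geqslant p_k$ in $P$ for all sufficiently large $k$, since otherwise the individual estimate $|\Gamma_{p_k}^{n,l}|\leqslant K|\Gamma_{\sigma(p_k)}^{n,l}|$ fails. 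Consequently, for each of the infinitely many $p_k\in Q\setminus Q'$ we have $\sigma(p_k)\geqslant p_{k+1}$, and $f(\Gamma_{p_k}^{n,l})$ lies in a ball of radius $\leqslant L\cdot O(\log p_k)$ inside $\Gamma_{\sigma(p_k)}^{n,l}$. By Main Theorem III together with the super-polynomial growth, this radius is far smaller than half the girth of $\Gamma_{\sigma(p_k)}^{n,l}$, so the ball is tree-like with at most $O(p_k^{LC\log 3})$ vertices for an absolute $C$, and the pre-image bound yields the size inequality $p_k^{n^2-1}\leqslant K\cdot O(p_k^{LC\log 3})$.

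The hard part is turning this into a contradiction valid for every fixed pair $(L,K)$. My plan is to first strengthen the consequence of the sum estimate: with super-polynomial spacing, any two preimages $p_k,p_{k'}$ of a common $q\in Q'$ force saturation at $q$ at the scale of the larger one, so at the $Q\setminus Q'$ level $\sigma$ is effectively injective in the tail. This makes the tree-like image balls for distinct $p_k\in Q\setminus Q'$ live in distinct components of $Y$, and summing their volumes against the sparsity of $Q'$ inside $P$ drives the necessary $L$ to infinity along $Q\setminus Q'$, contradicting that $L$ is a fixed constant. For (ii), the same argument is applied both within each dimension and across the dimensions $n_1,\ldots,n_N$; the distinct polynomial degrees $n_i^2-1$ produce mutually incompatible lower bounds on $L$, precluding regular maps between components of different dimensions.
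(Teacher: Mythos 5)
Your setup---choosing a sparse subsequence of primes so that a regular map must send a component into a tree-like ball of a much larger target component---matches the paper's preliminary step, and the dichotomy via the size-ratio condition (ruling out maps to smaller components) and the girth condition (ensuring tree-like images in larger ones) is correct in spirit. But the closing ball-volume count has a genuine gap that your ``hard part'' paragraph does not repair.

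Concretely, your inequality reads $p_k^{n^2-1}\leqslant K\cdot O\bigl(p_k^{LC\log 3}\bigr)$, which is a contradiction for large $p_k$ only when $L < (n^2-1)/(C\log 3)$. For a regular map with Lipschitz constant above this threshold the inequality simply holds, so nothing is excluded. Your proposed remedy---observing that $\sigma$ is eventually injective on $Q\setminus Q'$ so the tree-balls live in distinct target components, then ``summing their volumes \ldots to drive $L$ to infinity''---does not create a contradiction: each individual inequality $p_k^{n^2-1}\leqslant K\,p_k^{LC\log 3}$ is consistent once $L$ is large, and adding up consistent inequalities stays consistent. Making the spacing of $P$ even more violent (your $|\Gamma_{p_{k+1}}|\geqslant 2^{k|\Gamma_{p_k}|}$) does not help either: sparsity only guarantees the tree-localization, it does nothing to bound $L$ from above.

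What your argument never uses, and what the paper implicitly relies on by citing Theorem 2.8 of \cite{Hume}, is the \emph{expansion} of the graphs $\Gamma_p^{n,l}(a,b)$. Expansion is exactly the missing ingredient that makes the conclusion hold for \emph{every} fixed pair $(L,K)$: by the Poincar\'e inequality for expanders, an $L$-Lipschitz map $f\colon\Gamma_p\to T$ into any metric space satisfies $\frac{1}{|\Gamma_p|^2}\sum_{x,y}d_T(f(x),f(y))^2=O(L^2)$ with constant depending only on the spectral gap and the degree; whereas if $T$ is a degree-$\leqslant 4$ tree and $f$ has pre-images of size $\leqslant K$, then for every fixed $y$ at least half of the $x$ satisfy $d_T(f(x),f(y))\gtrsim\log(|\Gamma_p|/K)$, so the same average is $\gtrsim(\log|\Gamma_p|)^2\to\infty$. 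That contradiction is independent of $L$ and $K$. The same remark applies to your part (ii): the observation that the polynomial degrees $n_i^2-1$ differ is pleasant but, without expansion, the cross-dimension volume count faces the same threshold issue. In short, the paper delegates the core step to \cite[Theorem 2.8]{Hume} and relies on the graphs being expanders; your proof attempts a self-contained replacement but omits exactly that ingredient, so the argument cannot close as written.
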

\begin{proof}
(i) We choose $P\subseteq\mathbb P$ such that for each $p\in P$ and  the next prime in the subsequence $q\in P$,  we have $\gi \Gamma_q^{n,l}(a,b)>|\Gamma_p^{n,l}(a,b)|$ and $|\Gamma_q^{n,l}(a,b)| / |\Gamma_p^{n,l}(a,b)|\to\infty$ as $p\to\infty$.
This is possible as the graph is large girth. Then, the statement is immediate by Theorem 2.8 of \cite{Hume} applied to $\Gamma_p^{n,l}(a,b)$ as $p\to\infty, p\in P$.
This yields $2^{\aleph_0}$ regular equivalence classes of large girth dg-bounded expanders in each dimension $n$.

(ii) We
take the union $\Gamma_p=\cup_{i=1}^N \Gamma_p^{n_i,l}(a,b)$ as $p\to\infty$, ${p\in \cup_{i=1}^N\mathbb P}$ of $N$ sequences of our graphs 
in the chosen dimensions $n_1, \ldots, n_N$.  Since the graphs are large girth
and the assumptions  on $\gi$ and cardinality required by~\cite[Theorem 2.8]{Hume} are transitive,
we can choose the required infinite subsets  $P_1,\ldots, P_N\subseteq \mathbb P$ successively  
for $n_1, \ldots, n_N$.
\end{proof}
There is much flexibility in the formulation of the preceding corollary. In particular, there are numerous 
choices for subsets $P, P_1, \ldots, P_N$ and 
parameters $l, a,b$ can vary for distinct dimensions. The analogous result holds for graphs
$\Gamma_p^{n,l}(a,b;k)$ and $\Lambda_\ell^{n,l}(a,b;k)$ defined in Corollaries \ref{cor:lub} and \ref{cor:lub2}.
 

The following question is highly intriguing. Again,
$n\geqslant 2$ and $l\geqslant 1$ are as in III of Main Theorem. An expander is called a \emph{super-expander} if it is an expander
with respect to every super-reflexive Banach space~\cite{MN14}. In particular, such a graph does not coarsely embed into any uniformly convex Banach space. 
\begin{question}[super-expansion]\label{q:sexp}
Is $\Gamma_p^{n,l}(a,b)=Cay(SL_n(\mathbb F_p), \{ A^l_p, B^l_p\})$ as $p\to\infty$ a super-expander?
\end{question}

This is open for $n=2$ and $l=1$, hence, also for Margulis' expander~\cite{M82}. 
Currently available super-expanders, produced using a strong Banach variant of Kazhdan's property~(T)~\cite{Laf08,Laf09},
an iterative zig-zag type combinatorial construction~\cite{MN14}, or by means of warped cones, see e.g.,~\cite{NS17}, are all of finite girth.
A positive answer to Question~\ref{q:sexp} for at least one choice of parameters $n$ and $l$ will allow, for instance, to 
build an infinite `super monster' \emph{group} (like that from~\cite{GRW, ArzDel08} but with respect to group actions on super-reflexive Banach spaces):
a finitely generated group which does not admit a coarse embedding into any uniformly convex Banach space. This is of great interest in the context of the Novikov conjecture~\cite{KYu06}.

Question~\ref{q:sexp} is a large girth counterpart of well-known question, for each $n\geqslant 3$, whether or not the sequence of congruence quotients 
$Cay(SL_n(\mathbb Z/\ell\mathbb Z), S_\ell)$, as $\ell\to\infty$ is a super-expander; $S_\ell$ denotes the canonical image of a finite generating set $S$ of $SL_n(\mathbb Z)$.
However, for each $n\geqslant 3$, our expander
$Cay(SL_n(\mathbb F_p), \{ A^l_p, B^l_p\})$ as $p\to\infty$ is not coarsely equivalent to $Cay(SL_n(\mathbb F_p), S_p)$ as $p\to\infty$. 
Indeed, the sequence of marked finite groups $(SL_n(\mathbb F_p), S_p)$ as $p\to\infty$ converges to $(SL_n(\mathbb Z), S)$,
then by~\cite[Corollary 5]{Kun}, the sequence $Cay(SL_n(\mathbb F_p), S_p)$ as $p\to\infty$ is not coarsely embeddable into our sequence
$Cay(SL_n(\mathbb F_p), \{ A^l_p, B^l_p\})$ as $p\to\infty$. Therefore, 
a conjectural Banach property (T) of $SL_n(\mathbb Z), n\geqslant 3$ does not apply to conclude super-expansion of our expander (although, it would apply to the congruence quotients)  and
entirely new methods have to be designed in order to answer Question~\ref{q:sexp}. 
\smallskip

Since all three explicit constructions of 
large girth dg-bounded Cayley graphs, Margulis'\cite{M82}, Lubot\-zky-Phillips-Sarnak's~\cite{LPS}, and ours,
happen to be expanders and, in addition, of logarithmic girth, in the next question we wonder if this is always the case.
Restricting to Cayley graphs of finite quotients of a non Zariski dense subgroup of $SL_n(\mathbb Z)$ or of $Sp_{2n}(\mathbb Z)$ (cf.~\cite{Sp}),
or of another algebraic group are interesting instances of this question.
\begin{question}[large girth dg-bounded graphs with no expansion]\strut
Does there exist a large girth dg-bounded graph made of $r\geqslant 3$ regular Cayley graphs that is not an expander?
Moreover, with no weakly embedded expander? Furthermore, that is not a generalized expander?
\end{question}
Our final question explores possible metric embeddings differences between random graph expanders 
and known explicit constructions of large girth dg-bounded expanders. We refer to~\cite{MendelNaor} for the terminology and for the amazing results which 
yielded the question to us.
\begin{question}[random vs explicit]
Does there exist a Hadamard space $(M,d_M)$ such that\linebreak $\Gamma_p^{n,l}(a,b)=Cay(SL_n(\mathbb F_p), \{ A^l_p, B^l_p\})$ as $p\to\infty$ is an expander with respect to $(M,d_M)$ yet
a random regular graph is not expander with respect to $(M,d_M)$?
\end{question}
The main outcome of~\cite{MendelNaor} is a Hadamard space $(N, d_N)$  and a sequence  of 3-regular graphs $(\Lambda_n)_{n\in\mathbb N}$
that is an expander with respect to $(N,d_N)$ yet a random regular graph is not an expander with respect to $(N,d_N)$.
The construction of graphs $(\Lambda_n)_{n\in\mathbb N}$ is by a zig-zag iteration and it
is neither large girth nor made of Cayley graphs. The Hadamard space $(N,d_N)$ is the Euclidean cone over 
a suitable large girth dg-bounded graph (obtained from a random regular graph by removing a portion of edges). If our graph $\Gamma_p^{n,l}(a,b)$ as $p\to\infty$ is an expander with respect to  this $(N, d_N)$, then we have an affirmative answer to the preceding question.
This would give the first  large girth example of this kind versus the `small girth' construction from~\cite{MendelNaor}. In addition, a positive answer to the preceding question
(with a Hadamard space $(M,d_M)$ that differs from a Hilbert space and that is possibly not  such a cone) would also 
allow us to apply the main result of~\cite{NS}  to our graphs and such a space $(M,d_M)$. 
This would yield first examples of groups with strong fixed point properties on such $(M,d_M)$: namely, finitely generated groups such that,
almost surely, any of its isometric action on $(M,d)$  has a common fixed point.

\section{Appendix}\label{Appendix}
For an interested reader, we give a detailed proof of  Theorem \ref{mainthmsec4}.

\begin{proof}[Proof of Theorem \ref{mainthmsec4}]
	Fix $(n-1) = k\in \mathbb{N}$ and consider the matrix $A^{lr_{i}}B^{ls_{i}}$ with $l \geqslant 3k$ and $r_{i},s_{i}\in \mathbb{Z}\backslash \lbrace 0\rbrace $. Let $a,b\in \mathbb{N}$ with $a,b\geqslant 2$. Suppose 
	$$\mathcal{P}_{i} = A^{lr_{i}}B^{ls_{i}} = \begin{pmatrix}
	P_{11}(a,b) & P_{12}(a,b) & \ldots & P_{1n}(a,b)\\
	P_{21}(a,b) & P_{22}(a,b)  & \ldots & P_{2n}(a,b)\\
	P_{31}(a,b) & P_{32}(a,b)  & \ldots & P_{3n}(a,b)\\
	\vdots & & & \vdots\\
	&&\ldots&P_{(n-1)n}(a,b)\\
	P_{n1}(a,b) & P_{n2}(a,b) &\ldots & P_{nn}(a,b)\\
	\end{pmatrix} \in SL_{n}(\mathbb{Z}), $$
	where the polynomials $P_{uv}(a,b), 1\leqslant u,v\leqslant n$ satisfy
	\begin{itemize}
		\item $P_{11}(a,b) = {lr_{i} \choose k}{ls_{i} \choose k}a^{k}b^{k} + {lr_{i} \choose k-1}{ls_{i} \choose k-1}a^{k-1}b^{k-1}+ \cdots + {lr_{i} \choose 3}{ls_{i} \choose 3}a^{3}b^{3} + {lr_{i} \choose 2}{ls_{i} \choose 2}a^{2}b^{2} + {lr_{i} \choose 1}{ls_{i} \choose 1}ab~+~1$
		
		\item $P_{12}(a,b) = {lr_{i} \choose k}{ls_{i} \choose k-1}a^{k}b^{k-1} + {lr_{i} \choose k-1}{ls_{i} \choose k-2}a^{k-1}b^{k-2}+ \cdots + {lr_{i} \choose 3}{ls_{i} \choose 2}a^{3}b^{2} + {lr_{i} \choose 2}{ls_{i} \choose 1}a^{2}b^{1} + {lr_{i} \choose 1}a $
		\\
		\ldots
		\\
		and in general
		\item $P_{uv}(a,b) = \Sigma_{u'=u,v'=v}^{u'=k+2-v,v'=k+1}{lr_{i} \choose k-u'+1}{ls_{i} \choose k-v'+1}a^{k-u'+1}b^{k-v'+1} $ 
		with $u\leqslant v$
		\item $P_{uv}(a,b) = \Sigma_{u'=u,v'=v}^{u'=k+1, v' = k+2-u}{lr_{i} \choose k-u+1}{ls_{i} \choose k-v+1}a^{k-u'+1}b^{k-v'+1}$ with $u > v$
		\\
		\ldots
		
		\item $P_{nn}(a,b) = 1$
		
	\end{itemize}
	
	Then, we have the following inequalities for $l\geqslant 3k$.
	\begin{enumerate}
		\item $1 - \frac{1}{15}\leqslant \frac{P_{11}(a,b)}{{lr_{i} \choose k}{ls_{i} \choose k}a^{k}b^{k}} \leqslant 1 + \frac{1}{15},$
		\item $ |\frac{P_{12}(a,b)}{{lr_{i} \choose k}{ls_{i} \choose k}a^{k}b^{k}}| \leqslant \frac{1}{4}(1+\frac{1}{4^{2}}+ \frac{1}{4^{4}}+\cdots+\frac{1}{4^{2k-2}}) < \frac{1}{4}.\frac{16}{15},$\\
		\\
		\ldots
		\\
		and in general
		\item $ |\frac{P_{uv}(a,b)}{{lr_{i} \choose k}{ls_{i} \choose k}a^{k}b^{k}}| \leqslant \frac{1}{4^{u+v-2}}.\frac{16}{15}\,\, \forall uv > 1.$
	\end{enumerate}
	
	We proceed as in the $n=4$ case and consider $$Z = \prod_{i=1}^{t}\mathcal{P}_{i} = \prod_{i=1}^{t} A^{lr_{i}}B^{ls_{i}}, \hbox{ 
	for some } t\in \mathbb{N}, r_{i},s_{i}\in \mathbb{Z}\backslash \lbrace 0 \rbrace .$$ \\
	We shall show, by induction, that $|z_{11}| > |z_{12}| + ... + |z_{1n}| + 1$, where the $z_{ij}, 1\leqslant i,j\leqslant n$ denote the elements of the matrix $Z$. \\
	From the above inequalities it is clear that $\Sigma_{v=2}^{n} |\frac{P_{1v}(a,b)}{{lr_{i} \choose k}{ls_{i} \choose k}a^{k}b^{k}}|<\frac{1}{2}$ which means
	$$\frac{1}{{lr_{i} \choose k}{ls_{i} \choose k}a^{k}b^{k}} + \Sigma_{v=2}^{n} \left|\frac{P_{1v}(a,b)}{{lr_{i} \choose k}{ls_{i} \choose k}a^{k}b^{k}}\right| < \frac{P_{11}(a,b)}{{lr_{i} \choose k}{ls_{i} \choose k}a^{k}b^{k}}$$
	and in turn implies that the inductive assumption (basis of induction) holds.\\
	For the main step of the induction, suppose we are already given $$Z =\begin{pmatrix}
	z_{11} & z_{12} & \ldots & z_{1n}\\
	z_{21} & z_{22} &  \ldots & z_{2n}\\
	\vdots & \vdots & &\vdots\\
	z_{n1} & z_{n2} &  \ldots & z_{nn}\\   
	\end{pmatrix}$$
	with $|z_{11}| > |z_{12}| + \cdots + |z_{1n}| + 1 $.\\
	Let $$Z' = \begin{pmatrix}
	z'_{11} & z'_{12} &  \ldots & z'_{1n}\\
	z'_{21} & z'_{22} &  \ldots & z'_{2n}\\
	\vdots&  \vdots& &  \vdots \\
	z'_{n1} & z'_{n2} &  \ldots & z'_{nn}\\   
	\end{pmatrix}  = Z\times A^{lr_{t+1}}B^{ls_{t+1}}.$$
	Expand the first row of $Z'$, i.e., $z'_{1j}, 1\leqslant j \leqslant n$ in terms of the first row of $Z$, $z_{1j}, 1\leqslant j\leqslant n$ and the elements of the matrix $ A^{lr_{t+1}}B^{ls_{t+1}}$. Then 
	we can conclude by considering the inductive assumption  $|z_{11}| > |z_{12}| + \cdots + |z_{1n}| + 1 $ and the above inequalities for the matrix $A^{lr_{t+1}}B^{ls_{t+1}}$ that 
	$$|z'_{11}| > |z'_{12}|+ \cdots+|z'_{1n}| + 1.$$
\end{proof}


\providecommand{\bysame}{\leavevmode\hbox to3em{\hrulefill}\thinspace}
\providecommand{\MR}{\relax\ifhmode\unskip\space\fi MR }
\providecommand{\MRhref}[2]{%
	\href{http://www.ams.org/mathscinet-getitem?mr=#1}{#2}
}
\providecommand{\href}[2]{#2}

\end{document}